\DeclareFontFamily{OT1}{pzc}{}
\DeclareFontShape{OT1}{pzc}{m}{it}{<-> s * [1.100] pzcmi7t}{}
\DeclareMathAlphabet{\mathpzc}{OT1}{pzc}{m}{it}
    \patchcmd{\section}{\scshape}{\large\bfseries}{}{}
    \renewcommand{\@secnumfont}{\bfseries}
\numberwithin{equation}{section}
\newtheorem{theorem}{Theorem}[section]
\newtheorem*{theorem*}{Theorem}
\newtheorem{corollary}[theorem]{Corollary}
\newtheorem{lemma}[theorem]{Lemma}
\newtheorem{proposition}[theorem]{Proposition}
\theoremstyle{definition}
\newtheorem*{question*}{Question}
\newtheorem{remark}[theorem]{Remark}
\newtheorem{example}[theorem]{Example}
\def\ZZ{\mathbb{Z}}
\def\RR{\mathbb{R}}
\def\AA{\mathcal{A}}
\def\VV{\mathcal{V}}
\def\CC{\mathcal{C}}
\def\DD{\mathcal{D}}
\def\FF{\mathcal{F}}
\def\GG{\mathcal{G}}
\def\mono{\rightarrowtail}
\def\epi{\twoheadrightarrow}
\def\Hom{\mathrm{Hom}}
\def\Im{\mathrm{Im}}
\def\Coker{\mathrm{Coker}}
\def\1{\mathbb{1}}
\def\pt{\mathsf{pt}}
\def\Triv{\mathrm{Triv}}
\def\Coinv{\mathrm{Coinv}}
\def\Inv{\mathrm{Inv}}
\def\EE{\mathcal{E}}
\def\KK{\mathbb{K}}
\def\MC{\mathrm{MC}}
\def\MH{\mathrm{MH}}
\def\LL{\mathcal{L}}
\def\Mod{\mathrm{Mod}}
\def\WW{\mathcal{W}}
\def\Fr{\mathrm{Fr}}
\def\CoFr{\mathrm{CoFr}}
\def\Forg{\mathrm{Forg}}
\def\NN{\mathscr{N}}
\def\DMod{\mathrm{DistMod}}
\def\GrMod{\mathrm{GrMod}}
\def\GGG{\mathbb{G}}
\def\Rep{\mathrm{Rep}}
\def\GrRep{\mathrm{GrRep}}
\def\Ext{\mathrm{Ext}}
\def\Tor{\mathrm{Tor}}
\let\oldtocsection=\tocsection 
\let\oldtocsubsection=\tocsubsection 
\renewcommand{\tocsection}[2]{\hspace{0mm}\oldtocsection{#1}{#2}}
\renewcommand{\tocsubsection}[2]{\hspace{1em}\oldtocsubsection{#1}{#2}}
\title{Magnitude homology is a derived functor}
\author{Yasuhiko Asao}\address{
Faculty of Applied Mathematics, Fukuoka University}
\email{asao@fukuoka-u.ac.jp}
\author{Sergei O. Ivanov} \address{
Beijing Institute of Mathematical Sciences and Applications}
\email{ivanov.s.o.1986@gmail.com, ivanov.s.o.1986@bimsa.cn}
\thanks{The first named author thanks S. Wakatsuki for a fruitful discussion.}
\begin{document}

\begin{abstract}
We prove that the magnitude (co)homology of an enriched category can, under some  technical assumptions, be described in terms of derived functors between certain abelian categories. We show how this statement is specified for the cases of quasimetric spaces, finite quasimetric spaces, and finite digraphs. For quasimetric spaces, we define the notion of a distance module over a quasimetric space, define the functor of (co)invariants of a distance module and show that the magnitude (co)homology can be presented via its derived functors. As a corollary we obtain that the magnitude cohomology of a quasimetric space can be presented in terms of Ext functors in the category of distance modules. For finite quasimetric spaces, we show that magnitude (co)homology can be presented in terms of Tor and Ext functors over a certain graded algebra, called the distance algebra of the quasimetric space. For finite digraphs, the distance  algebra is a bound quiver algebra. In addition, we show that the magnitude cohomology algebra of a finite quasimetric space can be described as a Yoneda algebra.
\end{abstract}

\maketitle

\tableofcontents

\section{Introduction}

In \cite{leinster2008euler} Leinster introduced the notion of the Euler characteristic of a finite category. This Euler characteristic is a rational number coinciding with the Euler characteristic of the  classifying space if it is a finite CW-complex. Later \cite{leinster2013magnitude} he generalized this notion to the notion of the magnitude of a finite $\VV$-category $\CC,$ where $\VV$ is a monoidal category equipped with a multiplicative  homomorphism to a ring 
$|\cdot |:{\rm Ob}(\VV) \to \KK$. A special case of this notion is the notion of the magnitude of a finite metric space. The concept of magnitude was further explored in the special case of finite graphs \cite{leinster2019magnitude}, where the magnitude of a graph is defined as a formal power series with integer coefficients. It was shown that the coefficients of the formal power series can be described as a certain alternating sum, similar to the sum arising in the classical definition of the Euler characteristic. Having this analogy in mind, Hepworth and Willerton \cite{hepworth2017categorifying} defined the magnitude homology of a graph and showed that the coefficients of the magnitude power series are alternating sums of ranks of the magnitude homology groups. Leinster and Shulman \cite{leinster2021magnitude} generalized the notion of the magnitude homology from the case of a graph to the case of a small $\VV$-category $\CC,$ where $\VV$ is a semicartesian symmetric monoidal category equipped with a strong monoidal functor to an abelian symmetric monoidal category $\Sigma:\VV\to \AA.$ As in the case of magnitude, a special case was that of metric space. They showed that the relation between the magnitude and the magnitude homology can be generalized from the case of graphs to the case of finite metric spaces. The dual theory of magnitude cohomology was developed by Hepworth  \cite{hepworth2022magnitude}, where he introduces a product in the magnitude cohomology and shows that in the case of a finite metric space the magnitude cohomology algebra completely determines the metric space. 

If $\VV={\sf Set}$ is the category of sets, $\AA={\sf Ab}$ is the category of abelian groups and $\Sigma$ is the functor of free abelian group, then a $\VV$-category is just a category, and the magnitude homology of a category coincides with its ordinary homology  (homology of the classifying space). It is well known that the ordinary homology of a category can be represented as derived functors of the functor of a colimit \cite[App.II,Prop.3.3]{gabriel2012calculus}, \cite[\S1]{quillen2006higher}. To be more precise, if we denote by $\Mod(\CC)$ the category of functors to the category of abelian groups $\CC\to {\sf Ab},$ then we obtain a functor of colimit 
${\rm colim}:\Mod(\CC)\to  {\sf Ab},$
which is a right exact functor between abelian categories, and the homology of $\CC$ can be presented via it's left  derived functors  
\[H_n(\CC)\cong (L_n{\rm colim})(\ZZ).\] 
There is a similar statement about the cohomology of $\CC$ and the functor of limit. The goal of this work is to generalize these statements to the magnitude (co)homology of an enriched category, and point out the specifics that arise in the special cases of quasimetric space, finite quasimetric space and digraph. We also study the magnitude cohomology algebra of a finite quasimetric space and show that it can be presented as a Yoneda algebra. 

In order to prove our main statement about the magnitude homology of an enriched category, we assume that $\AA$ is an abelian B\'enabou cosmos and develop the theory of $\CC$-modules where $\CC$ is a small $\AA$-category. The category of $\CC$-modules $\Mod(\CC)$ is defined as the category of $\AA$-functors $\CC \to \AA$. We show that $\Mod(\CC)$ is an abelian category with enough of projectives and injectives, assuming that $\AA$ has enough of projectives and injectives. By an augmented $\AA$-category we mean a small $\AA$-category $\CC$ equipped with a functor into the  single-object $\AA$-category $\CC \to \pt.$ Similar to how this is done in the group homology theory, for an augmented $\AA$-category $\CC$ we define functor of trivial module, and its left and right adjoint functors of coinvariants and invariants  
\[ \Coinv,\Inv : \Mod(\CC) \longrightarrow \AA: \Triv, \hspace{1cm} \Coinv \dashv \Triv \dashv \Inv \]
and define the homology and cohomology of $\CC$ via their derived functors 
\[ H_n(\CC) = (L_n\Coinv)(\Triv(\1)), \hspace{1cm} H^n(\CC) = (R^n\Inv)(\Triv(\1)).\]
We prove that, under some additional assumptions on $\CC,$ this homology and cohomology can be computed using some version of the bar resolution. 

Returning to the setting of the magnitude homology of enriched categories, we assume that $\VV$ is a B\'enabou cosmos, $\AA$ is an abelian B\'enabou cosmos and $\Sigma:\VV\to \AA$ is a strong monoidal functor. Then for an augmented $\VV$-category $\CC$, satisfying some technical conditions, we prove that the magnitude (co)homology of $\CC$ can be presented as (co)homology of the corresponding $\AA$-category $\Sigma \CC,$ obtained from $\CC$ using the cosmos change construction 
\[ \MH^\Sigma_n(\CC) \cong  H_n(\Sigma \CC), \hspace{1cm} \MH_\Sigma^n(\CC) \cong  H^n(\Sigma\CC). \]
Let us note that for our convenience here we have slightly generalized the definition of the magnitude homology, and abandoned the assumption that the monoidal category $\VV$ is semicartesian, replacing this assumption with the assumption that an augmentation is given on $\CC$.

We also studied how these concepts are specialized for the case of quasimetric spaces. For a quasimetric space $X$ and a commutative ring $\KK$ we define the notion of a distance module over $X.$ Roughly speaking, a distance module over $X$ consists of a collection of $\RR$-graded $\KK$-modules indexed by points of $X$ equipped with some $\KK$-linear maps between them, satisfying some axioms. 
We prove that the category of distance modules over $X$ is equivalent to the category $\Sigma X$-modules
\[ \DMod(X)\simeq \Mod(\Sigma X).\]
Using this, we give a more explicit description of the magnitude homology of a quasimetric space in terms of derived functors. As a corollary we obtain a description of the magnitude cohomology with coefficients in  $\KK$ in terms of Ext-functors in the abelian category of distance modules
\[ \MH^{n,\ell}(X) = \Ext^n(\tilde \KK, \tilde  \KK[\ell]),\]
where $\tilde \KK$ is the trivial distance module and $\tilde \KK[\ell]$ is this distance module with a shifted grading. 

If $X$ is a finite quasimetric space, using these descriptions we show that the magnitude (co)homology can be described in terms of bigraded Tor and Ext functors over some $\RR$-graded algebra $\sigma X,$ called the distance algebra of $X$
\[\MH_{n,\ell}(X)\cong  \Tor_{n,\ell}^{\sigma X}(S,S), \hspace{1cm} \MH^{n,\ell}(X)\cong \Ext^{n,\ell}_{\sigma X}(S,S).\]
Here $S$ denotes a $\sigma X$-module described as a quotient of $\sigma X$ by some explicit ideal. Moreover, in this case the multiplication on the magnitude cohomology can be described in terms of the Yoneda product. 

We also have some versions of all the statements about distance modules which are relative to some subgroup $\GGG\subseteq \RR.$ Namely, if $\GGG\subseteq \RR$ is a subgroup of the additive group of real numbers and $X$ is a quasimetric space such that all finite distances lie in $\GGG,$ then we have a notion of $\GGG$-graded distance module, $\GGG$-graded algebra $\sigma X$ and so on. This is especially useful when $\GGG=\ZZ$ and all finite distances are integers. 

Specifying to an even more special case of a finite digraph $G$ treated as a quasimetric space, we find that the distance algebra can be presented as the quotient algebra of the path algebra 
\[\sigma G\cong \KK G/R(G)\] 
by an admissible ideal $R(G)$ defined by some explicit set of relations. In this case the $\sigma G$-module $S$ can be presented as $S \cong \KK G/J,$ where $J$ is the ideal generated by all arrows. Moreover in this case the category of $\ZZ$-graded distance modules can be described as the category of  graded representations of the bound quiver $(G,R(G))$ 
\[
\DMod_\ZZ(G) \simeq \GrRep(G,R(G)).
\]

\section{Background from   enriched category theory} 

In this section we collect some definitions and results from enriched category theory that can be found in \cite{kelly1982basic}, \cite{eilenberg1966closed} and \cite{lawvere1973metric}. 

\subsection{B\'enabou cosmos}

Further in this section, unless otherwise stated, we denote by $\VV$ a B\'enabou cosmos i.e. a complete, cocomplete symmetric closed monoidal category. The underlying category is denoted by $\VV_0.$ The tensor product in $\VV$ is denoted by $\otimes$ and the unit object is denoted by $\1.$ The inner hom-object is denoted by $\VV(-,=),$ while the hom-set is denoted by $\VV_0(-,=).$

Since the functors $-\otimes v$ and $\VV(v,-)$ are adjoint, we have the unit and counit of the adjunction 
\begin{equation}\label{eq:counits}
u\longrightarrow \VV(v,u\otimes v),  \hspace{1cm}    \VV(v,u)\otimes v\longrightarrow u.
\end{equation}
This counit allows to define the composition law for the  inner hom of $\VV$
\begin{equation}\label{eq:composition}
\circ :   \VV(u,w)\otimes  \VV(v,u)\longrightarrow \VV(v,w) 
\end{equation}
as the adjoint to the composition of counits (see \cite[\S 1.6]{kelly1982basic})
\begin{equation}
\VV(u,w)\otimes(\VV(v,u)\otimes v )    \longrightarrow  \VV(u,w)\otimes u \longrightarrow w.
\end{equation}

If $\CC$ is a $\VV$-category, the hom-object is denoted by $\CC(c,c'),$ and the hom-set in $\CC$ is denoted by $\CC_0(c,c')=\VV_0(\1,\CC(c,c')).$  The underlying category of $\CC$ will be also denoted by $\CC_0.$ 

The category $\VV$ itself can be regarded as a $\VV$-category with the inner hom $\VV(u,v),$ the composition defined by  \eqref{eq:composition} and unit morphism $1_v:\1\to \VV(v,v)$ is defined by the unit of the adjunction  \eqref{eq:counits} for $v=\1$
(see \cite[\S 1.6]{kelly1982basic}).

\subsection{Ends}

For $\VV$-categories $\CC$ and $\DD$ we can consider their tensor product $\CC\otimes \DD$ and the opposite category $\CC^{op}$ \cite[\S 1.4]{kelly1982basic}. 

Let $\FF:\CC^{op}\otimes \CC\to \DD$ be a $\VV$-functor. We say that a collection of morphisms 
$\varphi_c \in \DD_0(d, \FF(c,c))$ indexed by objects of $\CC$   is $\VV$-dinatural (or extraordinary $\VV$-natural), if the diagram 
\begin{equation}\label{eq:dinatural}
\begin{tikzcd}
\CC(c,c')\ar[r,"{\FF(c,-)}"] \ar[d,"{\FF(-,c')}"] & \DD(\FF(c,c),\FF(c,c')) \ar[d,"{\DD(\varphi_c,1)}"] \\ 
\DD(\FF(c',c'),\FF(c,c')) \ar[r,"{\DD(\varphi_{c'}, 1)}"] & \DD(d,\FF(c,c'))
\end{tikzcd}
\end{equation}
is commutative (see \cite[\S 1.7]{kelly1982basic}). The universal $\VV$-dinatural transformation is called the end of $\FF$, and its domain $d$ is denoted by $\int_\CC \FF=\int_c \FF(c,c).$

If $\CC$ is small and $\DD=\VV,$ then the end exists and can be computed as an equaliser  \cite[\S 2.1]{kelly1982basic}
\begin{equation}\label{eq:end}
\int_\CC \FF \to \prod_{c} \FF(c,c)  \rightrightarrows \prod_{c,c'} \VV(\CC(c,c'),\FF(c,c')),
\end{equation}
where the morphisms $\theta,\theta': \prod_{c} \FF(c,c)  \rightrightarrows \prod_{c,c'} \VV(\CC(c,c'),\FF(c,c'))$ are defined by the equations ${\sf pr}_{c,c'}\theta = \theta_{c,c'} {\sf pr}_c$ and ${\sf pr}_{c,c'}\theta = \theta_{c,c'} {\sf pr}_{c'},$ and the morphisms $\theta_{c,c'}$ and $\theta'_{c,c'}$ are double adjoint morphisms to $\FF(c,-)$ and $\FF(-,c')$ from the diagram \eqref{eq:dinatural}.

The coend $\int^\CC \FF=\int^c\FF(c,c)$ is defined dually, and if $\CC$ is small and $\DD=\VV,$ it exists and can be presented as a coequaliser 
\begin{equation}\label{eq:coend}
\coprod_{c,c'} \CC(c,c')\otimes \FF(c',c) \rightrightarrows \coprod_c \FF(c,c) \to \int^\CC \FF.
\end{equation} 

\subsection{Functor categories}

Assume that $\CC$ and $\DD$ are $\VV$-categories. For $\VV$-functors $\FF,\GG:\CC\to \DD$ a $\VV$-natural transformation $\varphi:\FF\to \GG$ is defined as a collection of morphisms $\varphi_c\in \DD_0(\FF c,\GG c)$ indexed by objects of $\CC$ such that the diagram 
\begin{equation}\label{eq:V-nat}
\begin{tikzcd}
\CC(c,c')\ar[r,"\FF_{c,c'}"] \ar[d,"\GG_{c,c'}"] & \DD(\FF c,\FF c') \ar[d,"{\DD(1,\varphi_{c'})}"] \\
\DD(\GG c,\GG c') \ar[r,"{\DD(\varphi_{c},1)}"] & \DD(\FF c,\GG c')
\end{tikzcd}   
\end{equation}
is commutative.

If $\CC$ is small, we can define the category of functors $[\CC,\DD]_0,$ whose objects are $\VV$-functors and morphisms are $\VV$-natural transformations. 
This category has a natural $\VV$-enrichment defined by the formula
\begin{equation}
[\CC,\DD](\FF,\GG) = \int_c  \DD(\FF c,\GG c).
\end{equation}
So we obtain a $\VV$-category of $\VV$-functors $[\CC,\DD]$. Here we note that the underlying hom-set $\VV_0(\1, [\CC, \DD](\FF, \GG))$ coincides with the collection of $\VV$-natural transformations $[\CC, \DD]_0(\FF, \GG)$. Moreover, if $\CC$ and $\DD$ are small, there is an equivalence of $\VV$-categories \cite[\S 2.3]{kelly1982basic}
\begin{equation}
[\CC\otimes \DD,\EE] \simeq [\CC,[\DD,\EE]].
\end{equation}

\begin{lemma}\label{lemma:iso}
Let $\varphi:\FF\to \GG$ be a $\VV$-natural transformation such that $\varphi_c$ is an isomorphism for each object $c.$ Then $\varphi$ is an isomorphism.
\end{lemma}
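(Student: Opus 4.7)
The plan is to construct the inverse $\VV$-natural transformation component by component. Since each $\varphi_c \in \DD_0(\FF c, \GG c)$ is an isomorphism in the underlying category $\DD_0$, it has an inverse $\psi_c \in \DD_0(\GG c, \FF c)$. First I would define the candidate $\psi : \GG \to \FF$ by declaring its $c$-component to be $\psi_c$. The task is then to verify that this family is $\VV$-natural, because once this is established, the equations $\varphi_c \circ \psi_c = 1_{\GG c}$ and $\psi_c \circ \varphi_c = 1_{\FF c}$ hold componentwise, and a $\VV$-natural transformation that is an isomorphism in each $\DD_0(\GG c, \FF c)$ with inverse a $\VV$-natural transformation is automatically an isomorphism in $[\CC,\DD]_0$.

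To verify $\VV$-naturality of $\psi$, I would exploit the bifunctoriality of the inner hom $\DD(-,=) : \DD^{op} \otimes \DD \to \VV$. The key observation is that the morphisms $\DD(\varphi_c, 1)$ and $\DD(1, \varphi_{c'})$ are isomorphisms in $\VV_0$, with inverses $\DD(\psi_c, 1)$ and $\DD(1, \psi_{c'})$ respectively; this follows directly from the identities $\DD(\varphi_c, 1) \circ \DD(\psi_c, 1) = \DD(\psi_c \varphi_c, 1) = 1$ and the dual identity on the second variable. Moreover, operations in the two variables commute: $\DD(\psi_c, 1) \circ \DD(1, \psi_{c'}) = \DD(\psi_c, \psi_{c'}) = \DD(1, \psi_{c'}) \circ \DD(\psi_c, 1)$.

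Starting from the $\VV$-naturality square \eqref{eq:V-nat} for $\varphi$, namely
\[
\DD(1, \varphi_{c'}) \circ \FF_{c,c'} \;=\; \DD(\varphi_c, 1) \circ \GG_{c,c'},
\]
I would then post-compose both sides on the left with $\DD(\psi_c, 1) \circ \DD(1, \psi_{c'})$. Using the commutation noted above and the fact that these are inverses of the corresponding maps for $\varphi$, the right-hand side collapses to $\DD(1, \psi_{c'}) \circ \GG_{c,c'}$, and the left-hand side collapses to $\DD(\psi_c, 1) \circ \FF_{c,c'}$. This is exactly the $\VV$-naturality square for $\psi$ (with the roles of $\FF$ and $\GG$ swapped), completing the verification.

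The only mildly delicate point is the bookkeeping with the contravariance of $\DD(-,=)$ in its first argument, which flips the order of composition; once this is handled carefully, the argument is purely diagrammatic and uses nothing beyond the bifunctoriality of the inner hom. I do not anticipate any substantial obstacle.
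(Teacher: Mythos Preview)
Your proposal is correct and follows essentially the same approach as the paper: both define the inverse componentwise as $\psi_c=\varphi_c^{-1}$, start from the $\VV$-naturality square for $\varphi$, and compose with $\DD(\varphi_c^{-1},\varphi_{c'}^{-1})=\DD(\psi_c,1)\circ\DD(1,\psi_{c'})$ to obtain the $\VV$-naturality square for $\psi$. The paper's proof is merely more terse, writing this composition in one step rather than unpacking the bifunctoriality explicitly as you do.
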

\begin{proof}
We need to check that the collection $(\varphi^{-1}_c:\GG c\to \FF c)$ is a $\VV$-natural transformation. Since $\varphi$ is a $\VV$-natural transformation, we have $\VV(1_{\FF c},\varphi_{c'}) \FF_{c,c'}  =\VV(\varphi_c,1_{\GG c'}) \GG_{c,c'}.$ If we compose it with $\VV(\varphi_{c}^{-1},\varphi_{c'}^{-1}),$ we obtain $\VV(\varphi^{-1}_c,1_{\FF c'}) \FF_{c,c'}  =\VV(1_{\GG c},\varphi_{c'}^{-1}) \GG_{c,c'}.$ Therefore $\varphi^{-1}$ is a $\VV$-natural transformation.
\end{proof}

\subsection{Weighted limits} Let $\CC$ be a small $\VV$-category and let $W,\FF:\CC\to \VV$ be $\VV$-functors. Then the $W$-weighted (or indexed) limit of $\FF$ can be defined as \cite[(3.7)]{kelly1982basic} 
\begin{equation}\label{eq:Wlimit_V}
\{ W,\FF\} = [\CC,\VV](W,\FF).
\end{equation}
So, in this case it exists. Moreover, in this case we see that it is $\VV$-natural in $W$ and $\FF$ 
\begin{equation}
\{,\}:[\CC,\VV] \otimes [\CC,\VV] \longrightarrow \VV. 
\end{equation}

More generally, if $W:\CC\to \VV$ and $\FF:\CC\to \DD$ are $\VV$-functors, then the weighted limit $\{W,\FF\}$ is defined as an object of $\DD$ together with an isomorphism 
\begin{equation}\label{eq:Wlimit_D}
\DD(d,\{W,\FF\}) \cong 
\{ W,\DD(d,\FF-)\},
\end{equation}
which is $\VV$-natural in $d.$ If we take $d=\{W,\FF\},$ then the image of $1_{\{W,\FF\}}$ under the isomorphism is denoted by 
\begin{equation}
\mu_{W,\FF} : W \longrightarrow \DD(\{W,\FF\},\FF-)  
\end{equation}
and called the unit of the weighted limit $\{W,\FF\}$. The isomorphism \eqref{eq:Wlimit_D} is uniquely defined by $\mu_{W,\FF} \in [\CC, \VV]_0(W, \DD(\{W, \FF\}, \FF-)).$ 

A $\VV$-category $\DD$ is called $\VV$-complete, if all weighted limits of functors from a small $\VV$-category $\CC$ exist. In this case the definition of a weighted limit can be uniquely extended to a $\VV$-functor \cite[(3.11) and \S 1.10]{kelly1982basic}
\begin{equation}
\{,\}:[\CC,\VV]\otimes [\CC,\DD] \longrightarrow \DD
\end{equation}
such that the isomorphism \eqref{eq:Wlimit_D} is $\VV$-natural in $W$ and $\FF.$ 

A $\VV$-functor $\Phi:\DD\to \DD'$ is said to  preserve the limit $\{W,\FF\}$ if there is an isomorphism $
\theta:\Phi\{W,\FF\}\xrightarrow{\cong}\{W,\Phi\FF\}$ such that the diagram 
\begin{equation}\label{eq:preservation}
\begin{tikzcd}
W\ar[r,"\mu_{W,\FF}"] 
\ar[d,"\mu_{W,\Phi\FF}"]& 
\DD(\{W,\FF\},\FF-)
\ar[d,"\Phi"]
\\ 
\DD'(\{W,\Phi\FF\},\Phi\FF-) \ar[r, "\DD'(\theta\textit{,} 1)"]
&
\DD'(\Phi\{W,\FF\},\Phi\FF-)
\end{tikzcd}
\end{equation}
is commutative \cite[\S 3.2]{kelly1982basic}. 

A weighted colimit $W\!\star\FF$ in $\DD$ can be defined as a weighted limit $\{W,\FF^{op}\}$ in $\DD^{op}.$ More precisely, if we have functors $W:\CC^{op}\to \VV$ and $\FF:\CC\to \DD,$ then the weighted colimit is an object $W\!\star \FF$ of $\DD$ together with an isomorphism
\begin{equation}\label{eq:W-colimit_def}
\DD(W\!\star \FF,d)\cong \{ W,\DD(\FF-,d)\},
\end{equation}
which is $\VV$-natural in $d.$ If we take $d=W\star \FF,$ then the image of $1_{W\star \FF}$ under the isomorphism is denoted by 
\begin{equation}
\nu_{W,\FF} : W \longrightarrow \DD(\{W,\FF\},\FF-)  
\end{equation}
and called the counit of the weighted colimit $W\star \FF$. The preservation of weighted colimits is defined dually to the preservation of weighted limits.

The category $\DD$ is called $\VV$-cocomplete, if all weighted colimits from small categories $\CC$ exist. In this case the definition of a weighted colimit can be uniquely extended to a  $\VV$-functor \cite[(3.11) and \S 1.10]{kelly1982basic}
\begin{equation}
\star : [\CC^{op},\VV]\otimes [\CC,\DD] \longrightarrow \DD
\end{equation}
such that the isomorphism \eqref{eq:W-colimit_def} is $\VV$-natural in $W$ and $\FF.$ 

A $\VV$-category $\DD$ is called $\VV$-bicomplete, if it is $\VV$-complete and $\VV$-cocomplete. For example, $\VV$ itself is $\VV$-bicomplete \cite[(3.74)]{kelly1982basic}. Moreover, if $\EE$ is a small $\VV$-category, the functor category $[\EE,\VV]$ is $\VV$-bicomplete \cite[Prop. 3.75]{kelly1982basic}. In this case for each object $e$ we can consider the evaluation $\VV$-functor  \cite[(2.17)]{kelly1982basic} 
\begin{equation}\label{eq:evaluation1}
E_e : [\EE,\VV] \longrightarrow \VV, \hspace{1cm} T\mapsto Te.
\end{equation}
This $\VV$-functor preserves all weighted limits and colimits \cite[(3.16)]{kelly1982basic}. In particular,  for a functor $\FF:\CC\to [\EE,\VV]$ we have isomorphisms 
\begin{equation}\label{eq:functor_limit}
\{W,\FF\} e \cong \{W, (\FF -)e  \}, \hspace{1cm} (W\!\star\FF)e\cong W\!\star ((\FF -)e), 
\end{equation}
which are $\VV$-natural in $e,$ $\FF$ and $W$.

For any object $c$ of $\CC$ we can consider $\VV$-functors $\CC(c,-):\CC\to \VV$ and $\CC(-,c):\CC^{op}\to \VV.$ Then the enriched Yoneda lemma says that for any functor $\FF:\CC\to \VV$ there are isomorphisms
\begin{equation}
\{\CC(c,-),\FF\} \cong \FF c, \hspace{1cm} \CC(-,c)\star \FF \cong \FF c    
\end{equation}
$\VV$-natural in $c$ and $\FF$ \cite[(3.12)]{kelly1982basic}.

For a functor $\FF:\CC^{op}\otimes \CC\to \DD$ its end and coend can be presented as weighted limits and colimits \cite[(3.59),(3.66)]{kelly1982basic}
\begin{equation} 
\int_\CC \FF = \{ \Hom_\CC,\FF\}, \hspace{1cm} \int^\CC\!\! \FF = \Hom_\CC \! \star \FF.
\end{equation}
Therefore, if $\DD$ is $\VV$-bicomplete and $\CC$ is small, the ends and coends of functors $\FF:\CC^{op}\otimes \CC\to \DD$  exist. 

\subsection{Conical limits}

For a set $S$ and an object $v$ of $\VV$ we denote by $S\cdot v$ the coproduct of copies of $v$ indexed by $S.$ Let $\LL$ be a small ordinary category (over the category of sets). We denote by $\LL_\VV$ the free $\VV$-category generated by $\LL$ \cite[\S 2.5]{kelly1982basic}. Its objects are objects of $\LL,$ and hom-objects are $\LL_\VV(l,l')=\LL(l,l')\cdot \1.$ If $\CC$ is a $\VV$-category, a functor $\FF:\LL\to \CC_0$ uniquely defines a $\VV$-functor $\overline{\FF}:\LL_\VV \to \CC.$ In particular, the functor $\Delta\1 :\LL\to \VV_0$ that sends all objects to $\1$ and all morphisms to $1_\1$ defines a functor $\overline{\Delta\1}:\LL_\VV \to \VV.$ The conical limit of $\FF$ is defined by 
\begin{equation}
{\lim}_\CC \: \FF = \{\overline{\Delta\1},\overline{\FF}\}.
\end{equation}
The unit \eqref{eq:Wlimit_D}   in this case $\mu_{\overline{\Delta\1},\overline{\FF}}$ is defined by the cone 
\begin{equation}
\eta_{\FF,l}: {\rm lim}_\CC \: \FF \longrightarrow \FF l,
\end{equation}
where $\eta_{\FF,l}=\mu_{\overline{\Delta\1},\overline{\FF},l}.$

If $\CC=\VV,$ then the ordinary limit of the functor $\FF:\LL\to \VV_0$ is always its conical colimit  \cite[\S 3.8]{kelly1982basic} 
\begin{equation}\label{eq:conic-limits-V}
{\lim}_{\VV_0}\: \FF \cong  {\lim}_\VV \:\FF.
\end{equation}
So we can say that ordinary limits in $\VV$ also have a stronger property of being conical limits in the enriched sense. Then the defining property of the weighted limit of a functor $\FF:\LL\to \CC_0$   \eqref{eq:Wlimit_D} can be rewritten as the existence of the following $\VV$-natural isomorphism.
\begin{equation}\label{eq:def-conic-lim}
\CC(c,{\lim}_\CC\:\FF) \cong {\lim}_{\VV} \:  \CC(c, \FF-) 
\end{equation}
If the conical limit exists in 
$\CC$, then 
the ordinary limit  also exists in $\CC_0$ 
and it coincides
with the conical limit \cite[(3.53)]{kelly1982basic}.
\begin{equation}\label{eq:conic-ordinary}
{\lim}_\CC \: \FF \cong {\lim}_{\CC_0}\: \FF
\end{equation}
However, the property of being a conical limit in $\CC$ is in general stronger than being an ordinary limit in $\CC_0.$ For example, the product of two objects $c_1\times c_2$ in $\CC_0$ is an object with a natural isomorphism 
$\CC_0(c,c_1\times c_2)\cong \CC_0(c,c_1)\times \CC_0(c,c_2).$
But the corresponding conical limit in $\CC$ is an object with 
a $\VV$-natural isomorphism 
$\CC(c,c_1\times c_2)\cong \CC(c,c_1)\times \CC(c,c_2).$ On the other hand, if $\VV=\CC$ then for the ordinary product $v_1\times v_2$ we have the isomorphism $\VV(v,v_1\times v_2)\cong \VV(v,v_1)\times \VV(v,v_2).$

Conical colimits are defined dually, and their defining property is 
\begin{equation}\label{eq:def-conic-colim}
\CC( {\rm colim}_\CC\: \FF,c) \cong  {\lim}_{\VV}\: \CC(\FF-,c).
\end{equation}
If $\VV=\CC,$ then the ordinary colimits in $\VV_0$ are conical colimits in $\VV.$
\begin{equation}\label{eq:conic-colimits-V}
{\rm colim}_{\VV_0} \FF \cong   {\rm colim}_\VV \FF. 
\end{equation}
In any $\VV$-category $\CC$ a conical colimit is also an ordinary colimit in $\CC_0.$

If $\Phi:\CC\to \CC'$ is a functor that preserves the conical limit ${\rm lim}_\CC\: \FF,$ then there is an isomorphism $\theta : \Phi({\rm lim}_\CC\:\FF ) \xrightarrow{\cong} {\rm lim}_\CC\:\Phi\FF$ such that the diagram \eqref{eq:preservation} is commutative. It follows that the diagram 
\begin{equation}\label{eq:preservation_conic}
\begin{tikzcd}
\Phi({\rm lim}_\CC\: \FF)\ar[rr,"\theta"] \ar[rd,"\Phi \eta_{\FF,l}"']  & & {\rm lim}_\CC\: \Phi\FF \ar[dl,"\eta_{\Phi \FF,l}"] \\
& \Phi\FF l  &    
\end{tikzcd}
\end{equation}
is also commutative. A dual digramm is commutative for a functor that preserves a conical colimit. 

Since, conical limits and colimits are particular cases of weighted limits and colimits, they exist in any $\VV$-bicomplete category $\CC.$ In particular, if $\DD$ is a small category, then conical limits and colimits  exist in $[\DD,\VV]$ and the evaluation $\VV$-functor
\begin{equation}\label{eq:evaluation_conic}
E_d : [\DD,\VV]\longrightarrow \VV
\end{equation}
preserves them \eqref{eq:evaluation1}. In particular, for any functor $\FF:\LL\to [\DD,\VV]_0$ there are isomorphisms 
\begin{equation}\label{eq:lim-functos}
({\lim}_{[\DD,\VV]}\: \FF)d \cong  {\lim}_{\VV}\: (\FF -)d, \hspace{1cm} ({\rm colim}_{[\DD,\VV]}\: \FF)d \cong  {\rm colim}_{\VV} \: (\FF -)d,
\end{equation}
which are $\VV$-natural in $d$ and natural in $\FF.$

\subsection{Kan extensions}
Let $\CC$ and $\DD$ be small $\VV$-categories and let $\EE$ be a complete $\VV$-category. Consider the functor categories $[\CC,\EE]$ and $[\DD,\EE].$ Then any $\VV$-functor $\FF:\CC\to \DD$ defines a composition $\VV$-functor.
\begin{equation}\label{eq:[F,1]}
[\FF,1]:[\DD,\EE] \longrightarrow [\CC,\EE]
\end{equation}
Moreover, there are two functors 
\begin{equation}\label{eq:Lan-Ran}
{\rm Lan}_\FF, {\rm Ran}_\FF : [\CC,\EE] \longrightarrow [\DD,\EE]
\end{equation}
which are adjoint to $[\FF,1]$ 
\begin{equation}
 {\rm Lan}_\FF  \dashv  [\FF,1] \dashv {\rm Ran}_\FF
\end{equation}
in the following enriched sense \cite[Th. 4.38, Th. 4.50]{kelly1982basic}.
\begin{equation}
[\DD,\EE]({\rm Lan}_\FF S,T) \cong [\CC,\EE](S,T\FF)
\end{equation}
\begin{equation}
[\CC,\EE](T\FF,S) \cong [\DD,\EE](T,{\rm Ran}_\FF S)
\end{equation}
The adjunction in the enriched sense implies the adjunction in the ordinary sense.
\begin{equation}
[\DD,\EE]_0({\rm Lan}_\FF S,T) \cong [\CC,\EE]_0(S,T\FF)
\end{equation}
\begin{equation}
[\CC,\EE]_0(T\FF,S) \cong [\DD,\EE]_0(T,{\rm Ran}_\FF S)
\end{equation}
The functor ${\rm Lan}_\FF S$ is called the left Kan extension of $S$ with respect to $\FF,$ and ${\rm Ran}_\FF S$ is called the right Kan extension of $S$ with respect to $\FF.$ They can be computed on objects as follows \cite[(4.18),(4.9)]{kelly1982basic}.
\begin{equation}
({\rm Lan}_\FF S)d \cong 
\DD(\FF-,d)\star S
 \hspace{1cm}
({\rm Ran}_\FF S)d \cong 
\{\DD(d,\FF -),S\}
\end{equation}
Moreover, if $\EE=\VV,$ then they can be presented in terms of ends and coends \cite[(4.24),(4.25)]{kelly1982basic}.
\begin{equation}\label{eq:LanRan}
{\rm Lan}_\FF S = \int^c \DD(\FF c, - ) \otimes S c, \hspace{1cm}
{\rm Ran}_\FF S = \int_c \VV(\DD(-,\FF c),S c).
\end{equation}

\subsection{Cosmos change}\label{subsec:cosmos_change}

Let $\VV$ and $\WW$ be two B\'enabou cosmoi. A lax monoidal functor is a functor
$\Sigma:\VV\to \WW$ together with a morphism
\begin{equation}
\1_\WW \longrightarrow \Sigma \1_\VV
\end{equation}
and a natural transformation
\begin{equation}\label{eq:nat-monoidal-functor}
\Sigma v\otimes \Sigma v' \longrightarrow \Sigma(v\otimes v')
\end{equation}
satisfying some axioms \cite[p.473]{eilenberg1966closed}. The natural transformation defines a natural transformation 
\begin{equation}
 \Sigma(\VV(v,v')) \longrightarrow \WW(\Sigma v, \Sigma v')   
\end{equation}
as the composition of the morhism  
$\Sigma(\VV(v,v')) \to \WW(\Sigma v,\Sigma(v\otimes \Sigma(v\otimes \VV(v,v'))))$ adjoint to \eqref{eq:nat-monoidal-functor} and a morphism $\WW(\Sigma v,\Sigma(v\otimes \Sigma(v\otimes \VV(v,v')))) \to \WW(\Sigma v, \Sigma v')$ induced by the counit $v\otimes \VV(v,v') \to v'.$ This morphism defines a structure of a closed functor on $\Sigma$ (see \cite[Ch.I,\S 3 and Ch.II,Prop.4.3]{eilenberg1966closed}).

For a lax monoidal functor  $\Sigma:\VV\to \WW$ and a $\VV$-category $\CC$ one can define a $\WW$-category $\Sigma \CC$ (see \cite[p.150-151]{lawvere1973metric} and \cite[Ch.I, Prop.6.1]{eilenberg1966closed}). Its objects are objects of $\CC,$ hom-objects are defined by $\Sigma\CC(c,c') =\Sigma(\CC(c,c')),$ the composition in $\Sigma\CC$ is defined as the composition
\begin{equation}
\Sigma\CC(c',c'')\otimes \Sigma\CC(c,c') \longrightarrow \Sigma( \CC(c',c'')\otimes \CC(c,c') ) \longrightarrow \Sigma\CC(c,c''), 
\end{equation}
while the identity morphism in $\Sigma\CC$ is defined as the composition 
\begin{equation}
\1_\WW \longrightarrow \Sigma \1_\VV \longrightarrow \Sigma(\CC(v,v)).
\end{equation}

Any $\VV$-functor $\FF:\CC \to \DD$ defines a $\WW$-functor $\Sigma\FF:\Sigma\CC\to \Sigma\DD$ such that $(\Sigma\FF)_{c,c'}=\Sigma(\FF_{c,c'}).$

\subsection{Nerve of an augmented category}\label{subsection:nerve}
For ordinary small category there is a notion of nerve, which is a simplicial set associated with a category. For enriched categories it is more complicated. One option is to assume that the underlying monoidal category is semicartesian. In this case the construction of nerve is well defined. This happens because any category enriched over a semicartesian monoidal category has a unique augmentation. If we want to consider the nerve of a category enriched over any monoidal category, we need to equip it by an augmentation.  

In this subsection we assume that $\VV$ is cocomplete symmetric monoidal category (not necessarily B\'enabou cosmos). Denote by $\pt$ the $\VV$-category with one object $*$ and hom given by $\pt(*,*)=\1.$ The composition and identity of $\pt$ are defined as the standard isomorphism $\1\otimes \1\cong \1$ and the identity morphism $\1\to \1.$ An \emph{augmented $\VV$-category} is a small non-empty $\VV$-category 
$\CC$ together with a
 $\VV$-functor 
$\varepsilon:\CC\to \pt$ called augmentation. 

The nerve of an augmented $\VV$-category $\CC$ is a simplicial object $\NN_\bullet(\CC)$ in $\VV,$ whose components are defined as
\begin{equation}
\NN_n(\CC)  = \coprod_{c_0,\dots,c_n}  \CC(c_0,c_1)\otimes \dots \otimes \CC(c_{n-1},c_n).
\end{equation}
In particular, for $n=0$ we have $\NN_0(\CC)=\coprod_{c_0\in {\rm Ob}(\CC)} \1.$
The face maps $d_i:\NN_n(\CC)\to \NN_{n-1}(\CC)$ for $0\leq i\leq n$ are defined as follows. For $0<i<n$ they are induced by  compositions
\begin{equation}
\CC(c_{i-1},c_i)\otimes \CC(c_i,c_{i+1}) \overset{\cong}\longrightarrow \CC(c_i,c_{i+1}) \otimes \CC(c_{i-1},c_i) \overset{\circ}\longrightarrow \CC(c_{i-1},c_{i+1}).
\end{equation}
The morphisms $d_0$ and $d_n$ are induced by the augmentation $\varepsilon:\CC(c_0,c_1)\to \1$ and  $\varepsilon:\CC(c_{n-1},c_n) \to \1.$ The degeneracy maps $s_i:\NN_n(\CC)\to \NN_{n+1}(\CC)$ are defined induced by the identity map $\1\to \CC(c_i,c_i).$ We leave the verification of simplicial relations to the reader.

\section{Modules over an enriched category}

\subsection{Abelian B\'enabou cosmoi} A B\'enabou cosmos $\AA$ is called abelian if  $\AA_0$ is an abelian category satisfying Grothendieck axioms ${\rm (AB4)}$ and ${\rm (AB4^*)}.$ The Grothendieck axioms say that a small (co)product of short exact sequences is a short exact sequence. Further in this section we assume that $\AA$ is an abelian B\'enabou cosmos. 

\begin{proposition}\label{prop:additive-functors}
For any object $a$ of $\AA$ we have the following.
\begin{enumerate}
\item The functor $a\otimes -$ is an additive right exact functor. Moreover, it commutes with small colimits.
\item The functors $\AA(a,-)$ and $\AA(-,a)$ are additive left exact functors. Moreover, $\AA(a-)$ commutes with limits and $\AA(-,a)$ sends colimits to limits.
\end{enumerate}
\end{proposition}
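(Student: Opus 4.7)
The plan is to deduce everything formally from the closed symmetric monoidal structure of $\AA$. The closure gives the hom-tensor adjunction $(a\otimes -)\dashv \AA(a,-)$, and symmetry combined with closure gives the additional natural isomorphism
\[
\AA_0(a\otimes b, c) \cong \AA_0(b,\AA(a,c)) \cong \AA_0(a,\AA(b,c)).
\]
No special use of the Grothendieck axioms (AB4), (AB4$^*$) is needed here; they enter elsewhere in the paper, for (co)products of short exact sequences.

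For (1), since $a\otimes -$ is a left adjoint and $\AA$ is cocomplete, it preserves every small colimit that exists in $\AA_0$. In particular it preserves finite coproducts, which in the abelian category $\AA_0$ are biproducts, so $a\otimes -$ is additive. Preservation of cokernels (a particular small colimit) then yields right exactness. For (2) applied to $\AA(a,-)$, this functor is a right adjoint, so by the dual argument it preserves all limits, is additive (from preservation of finite products), and left exact (from preservation of kernels).

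The one piece that is not immediate adjunction is the preservation property of $\AA(-,a)$, which is contravariant. Here I would apply Yoneda: for a small diagram $\FF:\II\to \AA_0$ and any object $b$, chasing the symmetric closed structure gives
\begin{align*}
\AA_0\bigl(b,\AA({\rm colim}_i \FF(i),a)\bigr)
&\cong \AA_0\bigl({\rm colim}_i \FF(i),\AA(b,a)\bigr) \\
&\cong {\lim}_i\, \AA_0(\FF(i),\AA(b,a)) \\
&\cong {\lim}_i\, \AA_0(b,\AA(\FF(i),a)) \\
&\cong \AA_0\bigl(b,{\lim}_i \AA(\FF(i),a)\bigr),
\end{align*}
naturally in $b$, so Yoneda yields a natural isomorphism $\AA({\rm colim}_i \FF(i),a)\cong {\lim}_i \AA(\FF(i),a)$. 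Specialising to finite coproducts gives additivity of $\AA(-,a)$, and specialising to cokernels gives that $\AA(-,a)$ sends cokernels to kernels, i.e.\ it is left exact in the contravariant sense. I expect no real obstacle: the entire argument is a formal consequence of the closed symmetric monoidal structure plus completeness and cocompleteness of $\AA$, with the only bit of bookkeeping being the Yoneda manipulation used to transfer the adjoint-functor behaviour to the contravariant functor $\AA(-,a)$.
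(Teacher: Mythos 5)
Your proposal is correct, and for $a\otimes -$ and $\AA(a,-)$ it is exactly the paper's argument: adjointness gives preservation of small colimits (resp.\ limits), preservation of finite biproducts gives additivity, and preservation of cokernels (resp.\ kernels) gives right (resp.\ left) exactness. The only point where you diverge is the contravariant functor $\AA(-,a)$: the paper simply cites the enriched-category facts that ordinary colimits in $\AA$ are conical colimits \eqref{eq:conic-colimits-V} and that conical colimits satisfy $\AA({\rm colim}\,\FF,a)\cong \lim \AA(\FF-,a)$ \eqref{eq:def-conic-colim}, whereas you unwind exactly this fact into an explicit Yoneda computation from the symmetric closed structure; the two are the same mathematics, with your version being more self-contained and the paper's shorter because it leans on the machinery already set up in Section 2.
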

\begin{proof} Recall that direct sums are both products and coproducts. (Co)kernels are (co)equalisers. An additive functor is  left (resp. right) exact if and only if it commutes with kernels (resp. cokernels). So we just need to prove that $a\otimes -$ commutes with small colimits,  $\AA(a-)$ commutes with small limits and $\AA(-,a)$ sends colimits to limits. The fact that $a\otimes -$ commutes with small colimits follows from the fact that it is left adjoint. Similarly we prove that $\AA(a-)$ commutes with limits.  Let us prove that $\AA(-,a)$ sends colimits to limits.  Note that small colimits in $\AA$ are conical colimits in the enriched sense \eqref{eq:conic-colimits-V}. Then the result follows from the formula \eqref{eq:def-conic-colim}. 
\end{proof}

\subsection{Category of modules} Let $\CC$ be a small $\AA$-category. A $\CC$-module $M$ is a $\AA$-functor $M:\CC\to \AA.$ A morphism of modules is a $\AA$-natural transformation. The category of $\CC$-modules is denoted by 
\begin{equation}
\Mod(\CC) = [\CC,\AA]_0.
\end{equation} 
A morphism in this category $f:M\to N$ is a $\AA$-natural transformation  \eqref{eq:V-nat}. The set of morphisms will be denoted by 
\begin{equation}
\Hom_\CC(M,N) = [\CC,\AA]_0(M,N).
\end{equation}
For an object $c$ of $\CC$ we consider the evaluation functor
\begin{equation}\label{eq:eval-mod}
{\rm Ev}_c : {\rm Mod}(\CC) \longrightarrow \AA, \hspace{1cm} M\mapsto Mc. 
\end{equation}

\begin{proposition}\label{prop:limits_of_modules}
The category of $\CC$-modules ${\rm Mod}(\CC)$ is bicomplete. Moreover, for any object $c$ the evaluation functor ${\rm Ev}_c:{\rm Mod}(\CC) \to \AA$ preserves limits and colimits. In particular, for any functor from a small category $\FF:\LL\to {\rm Mod}(\CC)$ there  isomorphisms \begin{equation} 
({\rm lim}\: \FF)c \cong  {\rm lim}\: ((\FF-) c), \hspace{1cm} ({\rm colim}\: \FF)c \cong  {\rm colim}\: ((\FF-) c),
\end{equation}
which are natural in $c$ and $\FF,$ and the following diagrams are commutative.
\begin{equation}
\begin{tikzcd}[column sep={13mm,between origins}]
({\rm lim}\: \FF)c \ar[rr,"\cong"]\ar[dr,"\eta_{\FF,l,c}"'] && {\rm lim}\: ((\FF-) c) \ar[dl,"\eta_{(\FF-)c,l}"] \\
& (\FF l)c
\end{tikzcd}
\begin{tikzcd}[column sep={13mm,between origins}]
& (\FF l)c \ar[dl,"\epsilon_{\FF,l,c}"'] \ar[dr,"\epsilon_{(\FF-)c,l}"] 
\\    
({\rm colim}\: \FF)c \ar[rr,"\cong"] && {\rm colim}\: ((\FF-) c)
\end{tikzcd}
\end{equation}
\end{proposition}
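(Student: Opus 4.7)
The plan is to reduce everything to results already collected in the background section, exploiting the fact that $\AA$ serves simultaneously as an abelian category, as a B\'enabou cosmos, and as the cosmos enriching itself. First, I would invoke \cite[Prop. 3.75]{kelly1982basic} to assert that the $\AA$-functor category $[\CC,\AA]$ is $\AA$-bicomplete; in particular all conical limits and conical colimits indexed by a small ordinary category $\LL$ exist in it. Combining this with \eqref{eq:conic-ordinary} and its colimit analogue (``a conical colimit in $\CC$ is also an ordinary colimit in $\CC_0$''), any such conical (co)limit is automatically an ordinary (co)limit in the underlying category $[\CC,\AA]_0 = \Mod(\CC)$, which already settles bicompleteness.

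Next, I would observe that the evaluation functor ${\rm Ev}_c$ of \eqref{eq:eval-mod} is the underlying ordinary functor of the evaluation $\AA$-functor $E_c$ of \eqref{eq:evaluation1}, which preserves all weighted limits and colimits by \cite[(3.16)]{kelly1982basic}. Since conical (co)limits are a special case of weighted ones, this preservation transports, via \eqref{eq:conic-ordinary}, to a preservation of ordinary (co)limits in $\Mod(\CC)$. The two natural isomorphisms $({\rm lim}\,\FF)c \cong {\rm lim}\,((\FF-)c)$ and $({\rm colim}\,\FF)c \cong {\rm colim}\,((\FF-)c)$ are then exactly the instance \eqref{eq:lim-functos} of this preservation property, applied with the enriching cosmos $\VV = \AA$ and with $\DD$ replaced by $\CC$; the stated naturality in $c$ and $\FF$ is part of the same statement.

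For the two commutative triangles, I would appeal to \eqref{eq:preservation_conic}: the preservation of a conical limit by an $\AA$-functor $\Phi$ is precisely the assertion that the transporting isomorphism $\theta$ intertwines the structural cones $\Phi\eta_{\FF,l}$ and $\eta_{\Phi\FF,l}$. Specializing $\Phi = {\rm Ev}_c$ and unfolding ${\rm Ev}_c(\eta_{\FF,l}) = \eta_{\FF,l,c}$ yields the left triangle in the statement; the right triangle is the formally dual assertion for the conical colimit, obtained from the dual of \eqref{eq:preservation_conic}.

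The main obstacle is conceptual rather than computational: one has to keep straight the three different roles played by $\AA$, and verify that the ``underlying ordinary category'' of $[\CC,\AA]$ is genuinely $\Mod(\CC) = [\CC,\AA]_0$ in the sense used here, so that ``ordinary (co)limit in $\Mod(\CC)$'' and ``conical (co)limit in $[\CC,\AA]$'' really name the same object. Once this bookkeeping is settled, every claim is an immediate consequence of the Kelly material already recorded in Section~2, and no further substantive verification remains.
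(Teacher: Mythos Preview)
Your proposal is correct and follows essentially the same approach as the paper: the paper's own proof is a one-sentence appeal to $\AA$-bicompleteness of $[\CC,\AA]$, preservation of conical (co)limits by the evaluation $\AA$-functor \eqref{eq:evaluation_conic}, the compatibility diagram \eqref{eq:preservation_conic}, and the identification of conical with ordinary (co)limits \eqref{eq:conic-ordinary}. You have simply unpacked these references in more detail.
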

\begin{proof} 
It follows from the fact that $[\CC,\AA]$ is $\AA$-complete,  that the evaluation functor $E_c:[\CC,\AA]\to \AA$ preserves conical limits and colimits 
\eqref{eq:evaluation_conic}, 
\eqref{eq:preservation_conic}, 
and that conical (co)limits are ordinary limits \eqref{eq:conic-ordinary}. 
\end{proof}

\begin{theorem} For a small $\AA$-category $\CC$ 
the category of $\CC$-modules $\Mod(\CC)$ is an abelian category satisfying Grothendieck axioms ${\rm (AB4)}$ and ${\rm (AB4^*)}.$  
\end{theorem}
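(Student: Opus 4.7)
The plan is to reduce every clause of the theorem to the corresponding property of $\AA,$ using Proposition \ref{prop:limits_of_modules} (which gives bicompleteness of $\Mod(\CC)$ and the preservation of all small limits and colimits by each evaluation functor $\mathrm{Ev}_c$) together with Lemma \ref{lemma:iso} (which says the family $\{\mathrm{Ev}_c\}_{c\in\mathrm{Ob}(\CC)}$ is jointly conservative). The strategy is therefore entirely pointwise.

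First I would set up the additive structure. The zero object is the constant $\AA$-functor at $0,$ and finite biproducts exist by bicompleteness, being computed componentwise in $\AA.$ The abelian group structure on $\Hom_\CC(M,N)$ is defined by $(\varphi+\psi)_c = \varphi_c+\psi_c;$ $\AA$-naturality of the sum is checked from the additivity of composition in $\AA$ guaranteed by Proposition \ref{prop:additive-functors}. Next, for any morphism $f:M\to N$ the kernel and cokernel exist in $\Mod(\CC),$ realised as the equaliser and coequaliser of $f$ and $0;$ by evaluation-preservation we have $(\Ker f)c=\Ker(f_c)$ and $(\Coker f)c=\Coker(f_c)$ in $\AA.$ In particular, $f$ is mono (resp. epi) in $\Mod(\CC)$ iff each $f_c$ is mono (resp. epi) in $\AA.$

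To conclude that $\Mod(\CC)$ is abelian I need the canonical comparison $\mathrm{Coim}(f)\to\Im(f)$ to be an isomorphism. Since $\Im(f)=\Ker(N\to \Coker f)$ and $\mathrm{Coim}(f)=\Coker(\Ker f\to M)$ are built from kernels and cokernels that are preserved by every $\mathrm{Ev}_c,$ the $c$-component of this comparison coincides with the analogous comparison for $f_c$ in $\AA,$ and is therefore an isomorphism. Applying Lemma \ref{lemma:iso} promotes this pointwise isomorphism to an isomorphism in $\Mod(\CC).$

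For the Grothendieck axioms, note that exactness of a sequence in $\Mod(\CC)$ is detected pointwise by the preceding description of kernels and images, and small products and coproducts in $\Mod(\CC)$ are also computed pointwise by Proposition \ref{prop:limits_of_modules}. Since $\AA$ satisfies $\mathrm{(AB4)}$ and $\mathrm{(AB4^*)}$ by assumption, both axioms transfer to $\Mod(\CC)$ componentwise. The only thing that could be a genuine obstacle — namely checking that the candidate kernels, cokernels, coproducts and products constructed componentwise actually assemble into $\AA$-functors with $\AA$-natural structure maps — is already taken care of by Proposition \ref{prop:limits_of_modules}, which is why this proof is essentially a matter of bookkeeping once that proposition is in hand.
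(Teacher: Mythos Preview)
Your proposal is correct and follows essentially the same approach as the paper's own proof: both establish the preadditive structure via Proposition \ref{prop:additive-functors}, use Proposition \ref{prop:limits_of_modules} to compute (co)limits, kernels, cokernels, images and coimages pointwise, invoke Lemma \ref{lemma:iso} to upgrade the pointwise isomorphism $\mathrm{Coim}(f)_c\cong\Im(f)_c$ to an isomorphism in $\Mod(\CC)$, and then transfer $\mathrm{(AB4)}$ and $\mathrm{(AB4^*)}$ from $\AA$ by the same pointwise reasoning. The only differences are cosmetic (you verify $\AA$-naturality of $\varphi+\psi$ while the paper checks $\varphi-\psi$, and your treatment of the Grothendieck axioms is phrased slightly more abstractly).
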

\begin{proof}
Let $\varphi,\psi:M\to N$ be morphisms of $\CC$-modules. Consider a collection $\varphi - \psi = (\varphi_c- \psi_c)$ indexed by objects of $\CC.$ Let us check that $\varphi - \psi$ is a morphism of $\CC$-modules using the definition \eqref{eq:V-nat}. Since $\AA(Mc,-)$ and $\AA(-,Nc')$ are additive functors (Proposition \ref{prop:additive-functors}), they commute with differences of morphisms. Combining this with the fact that composition in $\AA$ is bilinear, we obtain that $\varphi - \psi$ is a morphism of $\CC$-modules. Therefore  $\Hom_\CC(M,N) \subseteq \prod_c  \AA_0(Mc,Nc)$ is a subgroup. In particular, $\Hom_\CC(M,N)$ has a natural structure of a group. Using that the composition in $\Mod(\CC)$ is defined component-wise and the composition in $\AA_0$ is bilinear, we obtain that it is bilinear in $\Mod(\CC).$ Therefore, $\Mod(\CC)$ is a preadditive category. 

Proposition \ref{prop:limits_of_modules} implies that $\Mod(\CC)$ has finite products and coproducts. Consider a zero $\CC$-module $0$ defined by $0(c)=0.$ Using the definition of a morphism of $\CC$-modules \eqref{eq:V-nat}, it is easy to check that $0$ is a zero object in $\Mod(\CC).$ Since any preadditive category with finite products is additive \cite[\S 2.1]{popescu1973abelian}, we obtain that $\Mod(\CC)$ is an additive category. Moreover, since $\Mod(\CC)$ is bicomplete, we obtain that it is also preabelian. 

In order to prove that $\Mod(\CC)$ is abelian, we need to prove that for any morphism $\varphi:M\to N$ the morphism ${\rm Coim}(\varphi)\to {\rm Im}(\varphi)$ is an isomorphism. Proposition \ref{prop:limits_of_modules} imply that there are isomorphisms ${\rm Coim}(\varphi)c \cong  {\rm Coim}(\varphi_c)$ and ${\rm Im}(\varphi)c\cong {\rm Im}(\varphi_c),$ which are natural in the sense that the diagram 
\begin{equation}
\begin{tikzcd}
Mc \ar[r] \ar[d,equal] & {\rm Coim}(\varphi)c \ar[r] \ar[d,"\cong"] & {\rm Im}(\varphi)c \ar[r] \ar[d,"\cong"] & Nc \ar[d,equal] \\
Mc \ar[r] & {\rm Coim}(\varphi_c) \ar[r] & {\rm Im}(\varphi_c) \ar[r] & Nc
\end{tikzcd}
\end{equation}
is commutative. 
Using that the morphism ${\rm Coim}(\varphi_c)\to {\rm Im}(\varphi_c)$ is an isomorphism in $\AA,$ we obtain that ${\rm Coim}(\varphi)c\to {\rm Im}(\varphi)c$ is an isomorphism. By Lemma \ref{lemma:iso} we obtain that the morphism ${\rm Coim}(\varphi)c\to {\rm Im}(\varphi)c$ is an isomorphism. Therefore $\Mod(\CC)$ is abelian. 

Let us finally prove that ${\rm Mod}(\CC)$ satisfies the axiom ${\rm (AB4)},$ and the axiom ${\rm (AB4^*)}$ can be proved similarly. Consider a collection of short exact sequences $\FF'_i \mono \FF_i \epi \FF''_i.$ We need to prove that the sequence $\coprod_i \FF_i\to \coprod_i \FF_i \to \coprod_i \FF'$ is short exact. Since kernels, cokernels and images are defined component-wise in ${\rm Mod}(\CC)$ we know that $\FF_ic\to \FF_i c \to \FF'_i c$ is short exact for any $i$ and $c$, and we need to prove that $(\coprod_i \FF_i)c\to (\coprod_i \FF_i)c \to (\coprod_i \FF')c$ is short exact.  By Proposition \ref{prop:limits_of_modules} we have natural isomorphisms $(\coprod_i \FF'_i)c\cong \coprod_i (\FF'_ic),$ $(\coprod_i \FF_i)c\cong \coprod_i (\FF_ic)$ and $(\coprod_i \FF''_i)c\cong \coprod_i (\FF''_ic)$ such that the diagram 
\begin{equation}
\begin{tikzcd}
(\coprod_i \FF'_i)c \ar[r] \ar[d,"\cong"] 
& 
(\coprod_i \FF_i)c \ar[r] \ar[d,"\cong"] 
& (\coprod_i \FF''_i)c \ar[d,"\cong"]  
\\
\coprod_i (\FF'_ic) \ar[r] 
& 
\coprod_i (\FF_ic) \ar[r]  
& \coprod_i (\FF''_ic) 
\end{tikzcd}
\end{equation}
is commutative in $\AA$. Combining this with the fact that $\AA$ satisfies ${\rm (AB4)}$ we obtain that the sequence $(\coprod_i \FF_i)c\to (\coprod_i \FF_i)c \to (\coprod_i \FF')c$ is short exact. 
\end{proof}

\subsection{Induced modules}

Let $f:\CC\to \DD$ be a $\AA$-functor between small $\AA$ categories. Then we consider functors 
\begin{equation}
f^* : \Mod(\DD) \longrightarrow \Mod(\CC), \hspace{1cm}
f_!,f_* : \Mod(\CC) \longrightarrow \Mod(\DD)
\end{equation}
defined by the formulas $f^*=[f,1]_0,$ $f_!=({\rm Lan}_f)_0$ and $f_*=({\rm Ran}_f)_0$ (see \eqref{eq:Lan-Ran}). Then there are adjunctions
\begin{equation}\label{eq:f-adj}
f_! \dashv f^* \dashv f_*. 
\end{equation}
The $\CC$-module $f^*N$ will be called the restricted module of $N,$ the $\DD$-module $f_!M$ is called the induced module of $M$, and $f_*M$ is called the coinduced module of $M.$ By \eqref{eq:LanRan} we have the following explicit  formulas
\begin{equation}\label{eq:end_formulas}
f_! M = \int^c \DD(f c, - ) \otimes M c, \hspace{1cm}
f_* M = \int_c \AA(\DD(-,f c),M c).
\end{equation}

If we have two $\AA$-functors $f:\CC\to \DD$ and $g:\DD\to \EE,$ we have \cite[Th. 4.47]{kelly1982basic}
\begin{equation}\label{eq:induced_comp}
(g f)^*= f^* g^*, \hspace{1cm} (g f)_! \cong g_! f_!, \hspace{1cm} (g f)_* \cong g_* f_*. 
\end{equation}

Projective (resp. injective) objects of the abelian category of $\CC$-modules are called projective (resp. injective) $\CC$-modules.

\begin{proposition}\label{prop:coinduced_abelian}
Let $f:\CC\to \DD$ be a $\AA$-functor between small $\AA$-categories. Then the following holds. 
\begin{enumerate}
    \item $f^*$ is an exact functor, $f_!$ is right exact functor, $f_*$ is left exact functor.
    \item $f_!$ sends projective $\CC$-modules to projective $\DD$-modules. 
    \item $f_*$ sends injective $\CC$-modules to injective $\DD$-modules.
\end{enumerate}
\end{proposition}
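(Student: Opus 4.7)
The plan is to use the adjunctions $f_! \dashv f^* \dashv f_*$ in \eqref{eq:f-adj} together with the fact that limits and colimits in $\Mod(\CC)$ and $\Mod(\DD)$ are computed pointwise (Proposition \ref{prop:limits_of_modules}).

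For (1), $f^*$ is simply precomposition with $f$, so $(f^*N)c = N(fc)$. By Proposition \ref{prop:limits_of_modules} (co)limits of modules are computed at each object, so $f^*$ preserves all small limits and colimits, and in particular is additive and preserves kernels and cokernels; hence $f^*$ is exact. For $f_!$ and $f_*$, being respectively a left and right adjoint between bicomplete categories, they preserve colimits and limits respectively; in particular $f_!$ preserves cokernels and $f_*$ preserves kernels, and they are additive because the adjunction isomorphisms are natural in Hom-groups. Therefore $f_!$ is right exact and $f_*$ is left exact.

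For (2), let $P$ be a projective $\CC$-module. Using the adjunction $f_! \dashv f^*$ we have a natural isomorphism
\begin{equation}
\Hom_\DD(f_!P, -) \cong \Hom_\CC(P, f^*(-)).
\end{equation}
Since $f^*$ is exact by (1) and $\Hom_\CC(P,-)$ is exact because $P$ is projective, the composite functor $\Hom_\DD(f_!P,-)$ is exact, so $f_!P$ is projective. Part (3) is dual: for an injective $\CC$-module $I$ the adjunction $f^* \dashv f_*$ gives
\begin{equation}
\Hom_\DD(-, f_*I) \cong \Hom_\CC(f^*(-), I),
\end{equation}
which is exact because $f^*$ is exact and $\Hom_\CC(-,I)$ is exact; hence $f_*I$ is injective.

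There is no real obstacle here; the only point to be slightly careful about is that the enriched adjunctions $f_! \dashv f^* \dashv f_*$ of \cite{kelly1982basic} descend to ordinary adjunctions of additive functors between the underlying abelian categories $\Mod(\CC)$ and $\Mod(\DD)$, which is automatic once one knows both categories are additive and the Hom-set isomorphisms are natural.
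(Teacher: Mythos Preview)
Your proof is correct and essentially identical to the paper's: parts (2) and (3) are verbatim the same argument, and for (1) the only difference is that you show $f^*$ is exact via the pointwise formula and Proposition~\ref{prop:limits_of_modules}, whereas the paper simply notes that $f^*$, having both a left and a right adjoint by \eqref{eq:f-adj}, is automatically both right and left exact.
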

\begin{proof}
(1). It follows from  \eqref{eq:f-adj} and the fact that left adjoint functor is left exact and right adjoint functor is left exact.

(2). Let $P$ be a projective $\CC$-module. Then the functor $\Hom_{\CC}(P,-)$ is exact. Since $f^*$ is also exact, we obtain that $\Hom_{\DD}(f_!P,-)=\Hom_{\CC}(P,f^*(-))$ is exact. Hence $f_!P$ is projective. 

(3). Similar. 
\end{proof}

\begin{proposition}\label{prop:surjective_on_objects}
If $f:\CC\to \DD$ is surjective on objects, then $f^*$ is a faithful functor. Moreover, in this case for any $\DD$-module $M$ the counit of adjunction $f_!f^*M\to M$ is an epimorphism, and the unit of adjunction $M\to f_*f^* M$ is a monomorphism. 
\end{proposition}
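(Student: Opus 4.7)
The plan is to prove the three assertions separately, in each case using Proposition \ref{prop:limits_of_modules} to reduce global statements in $\Mod(\DD)$ to pointwise statements in $\AA$, and using surjectivity of $f$ on objects to extend ``pointwise at objects of the form $fc$'' to ``pointwise at all objects of $\DD$''.

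First I would prove that $f^*$ is faithful. Morphisms in $\Mod(\DD)$ are $\AA$-natural transformations, and two such agree iff all of their components agree. If $\alpha,\beta\colon M\to N$ satisfy $f^*\alpha=f^*\beta$, then $\alpha_{fc}=\beta_{fc}$ for every $c\in\CC$. Since $f$ is surjective on objects, every object $d\in\DD$ is of the form $fc$ for some $c$, hence $\alpha_d=\beta_d$ for every $d$ and $\alpha=\beta$.

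For the counit $\varepsilon_M\colon f_!f^*M\to M$, I would invoke the triangle identity of the adjunction $f_!\dashv f^*$ from \eqref{eq:f-adj}, namely $f^*\varepsilon_M\circ \eta_{f^*M}=1_{f^*M}$ in $\Mod(\CC)$. Evaluating at $c\in\CC$ gives $(\varepsilon_M)_{fc}\circ(\eta_{f^*M})_c=1_{M(fc)}$, so $(\varepsilon_M)_{fc}$ is a split epimorphism in $\AA$. By surjectivity on objects, $(\varepsilon_M)_d$ is a split epimorphism for every $d\in\DD$. By Proposition \ref{prop:limits_of_modules} the evaluation functors $\mathrm{Ev}_d$ preserve cokernels, so the cokernel of $\varepsilon_M$ has zero value at every $d$ and is therefore the zero module (by Lemma \ref{lemma:iso} applied to its identity morphism, or directly because $0$ is characterised componentwise); hence $\varepsilon_M$ is an epimorphism in $\Mod(\DD)$.

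The argument for the unit $\eta_M\colon M\to f_*f^*M$ is dual: the triangle identity of $f^*\dashv f_*$ yields $\varepsilon'_{f^*M}\circ f^*\eta_M=1_{f^*M}$, so $(\eta_M)_{fc}$ is a split monomorphism in $\AA$ for every $c$, hence $(\eta_M)_d$ is a split monomorphism for every $d\in\DD$; Proposition \ref{prop:limits_of_modules} then gives that the kernel of $\eta_M$ vanishes componentwise, so $\eta_M$ is a monomorphism. Since the entire argument is a sequence of triangle-identity unpackings followed by an appeal to Proposition \ref{prop:limits_of_modules}, I do not expect any substantial obstacle; the only subtlety is to remember that componentwise split (epi/mono) is stronger than plain componentwise (epi/mono) and is what the triangle identity produces for free.
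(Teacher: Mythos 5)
Your proof is correct and follows essentially the same route as the paper: faithfulness from surjectivity on objects, then the triangle identities of the adjunctions to get that $f^*$ applied to the counit (resp.\ unit) is a split epi (resp.\ split mono). The only cosmetic difference is the final inference — the paper concludes by noting that the faithful functor $f^*$ reflects epimorphisms and monomorphisms, whereas you conclude via the componentwise computation of (co)kernels from Proposition \ref{prop:limits_of_modules}; both are valid.
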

\begin{proof}
For two morphisms $\varphi,\psi:M\to N$ of $\DD$-modules the equation $f^*(\varphi)=f^*(\psi)$ implies  $\varphi_{fc} = \psi_{fc}$ for any object $c$ of $\CC.$ Since $f$ is surjective on objects the equation $\varphi_{fc} = \psi_{fc}$ for each $c$ implies $\varphi_{d} = \psi_{d}$ for each $d.$ It follows that $f^*$ is faithfull. 

Using that $f_!$ is left adjoint to $f^*,$ we obtain that the composition
\begin{equation}
f^* \xrightarrow{u f^*} f^*f_!f^* \xrightarrow{f^* e } f^*    
\end{equation}
is identical, where $u$ is the unit of the adjunction, and $e$ is the counit of the adjunction. Therefore $f^* (e_M) : f^*(f_!f^* M) \to f^*(M)$ is an epimorphism. Since $f^*$ is faithfull, we obtain that $e_M : f_!f^* M \to M$ is also epimorphism. Similarly we prove that $M\to f_*f^*M$ is a monomorphism. 
\end{proof}

\subsection{Free modules} Any set $X$ can be treated as a \emph{discrete  $\AA$-category}, whose objects are elements of $X,$ and the hom-sets are defined so that $X(x,x)=\1$ and $X(x,x')=0$ for $x\ne x'.$ Then $\Mod(X)$ is isomorphic to the category of families $(a_x)_{x\in X}$ of objects from $\AA.$ We will identify $X$-modules with these families.

For any small $\AA$-category $\CC$ we treat ${\rm Ob}(\CC)$ as a discrete $\AA$-category. Then we have the canonical embedding of $\AA$-categories $\iota:{\rm Ob}(\CC)\to \CC.$ So we can consider the functors
\begin{equation}
\Forg:\Mod(\CC) \longrightarrow \Mod({\rm Ob}(\CC))
, \hspace{1cm}
\Fr, \CoFr : \Mod({\rm Ob}(\CC)) \longrightarrow \Mod(\CC)
\end{equation}
defined as
$\Forg=\iota^*,$ $\Fr=\iota_!,$  
$\CoFr = \iota_*.$
So the forgetful functor sends a $\CC$-module $M$ to the family $(M_c)_{c\in {\rm Ob}(\CC)}$ and $\Fr$ and $\CoFr$ are left and right adjoint functors to the forgetful functor
\begin{equation}\label{eq:adj_free}
\Fr \dashv \Forg \dashv \CoFr.
\end{equation}
Equations \eqref{eq:end_formulas} imply there are the following explicit formulas 
\begin{equation}\label{eq:fre_explicit}
\Fr(a_*) \cong \coprod_c a_c\otimes  \CC(c,-), \hspace{1cm} \CoFr(a_*) \cong  \prod_c \AA(\CC(-,c),a_c). 
\end{equation}
A $\CC$-module of the form $\Fr(a_*)$ is called a free module, and the $\CC$-module of the form $\CoFr(a_*)$ is called cofree $\CC$-module.

Proposition \ref{prop:surjective_on_objects} implies that  for any $\CC$-module $M$  we have an epimorphism from the free $\CC$-module, and a monomorphism into a cofree $\CC$-module
\begin{equation}\label{eq:fr_forg_epi}
\Fr(\Forg(M))\epi M, \hspace{1cm}   M\mono \CoFr(\Forg(M)).
\end{equation}

\begin{proposition}\label{prop:projectives}  Assume that $\AA$ has enough of projectives. Then $\Mod(\CC)$ has enough of projectives. Moreover, a $\CC$-module is projective if and only if it is a direct summand  of a module of the form $\Fr(p_*)$ such that $p_c$ is a projective object of $\AA$ for any $c.$ 
\end{proposition}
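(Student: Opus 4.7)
The plan is to prove both statements by leveraging the adjunction $\Fr \dashv \Forg$ together with the explicit description of $\Mod({\rm Ob}(\CC))$ as a product category.

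First I would observe that, under the identification of ${\rm Ob}(\CC)$-modules with families $(a_c)_{c\in {\rm Ob}(\CC)}$ of objects of $\AA$, an object $(p_c)$ is projective in $\Mod({\rm Ob}(\CC))$ if and only if each $p_c$ is projective in $\AA$. This is a routine check: limits, colimits and exactness in $\Mod({\rm Ob}(\CC))$ are computed component-wise (by Proposition~\ref{prop:limits_of_modules}), so $\Hom_{{\rm Ob}(\CC)}((p_c), (a_c)) = \prod_c \AA_0(p_c, a_c)$, and exactness of a product of functors is equivalent to exactness of each factor (using $({\rm AB4^*})$ for abelian groups, which is automatic). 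In particular, since $\AA$ has enough projectives, so does $\Mod({\rm Ob}(\CC))$: for any family $(a_c)$ choose epimorphisms $p_c \epi a_c$ with $p_c$ projective in $\AA$.

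Next I would prove that $\Mod(\CC)$ has enough projectives. Given a $\CC$-module $M$, the counit of the adjunction $\Fr \dashv \Forg$ gives an epimorphism $\Fr(\Forg(M)) \epi M$ by \eqref{eq:fr_forg_epi}. Applying the previous paragraph to the family $\Forg(M) = (Mc)_{c}$, choose an epimorphism $(p_c) \epi (Mc)$ in $\Mod({\rm Ob}(\CC))$ with each $p_c$ projective in $\AA$. Since $\Fr = \iota_!$ is a left adjoint, it is right exact by Proposition~\ref{prop:coinduced_abelian}(1), so it preserves this epimorphism. Composing yields an epimorphism
\[
\Fr(p_*) \epi \Fr(\Forg(M)) \epi M.
\]
By Proposition~\ref{prop:coinduced_abelian}(2), $\Fr(p_*)$ is projective in $\Mod(\CC)$ because $(p_c)$ is projective in $\Mod({\rm Ob}(\CC))$. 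This shows $\Mod(\CC)$ has enough projectives.

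For the characterization of projectives, suppose $P$ is a projective $\CC$-module. By the construction just given, there is an epimorphism $\Fr(p_*) \epi P$ with each $p_c$ projective. Projectivity of $P$ gives a section, exhibiting $P$ as a direct summand of $\Fr(p_*)$. Conversely, any direct summand of a projective is projective, and $\Fr(p_*)$ is projective by Proposition~\ref{prop:coinduced_abelian}(2) whenever each $p_c$ is projective in $\AA$. The main point to be careful about is the identification of projectives in the product category $\Mod({\rm Ob}(\CC))$; once this is in hand, the rest is a formal application of the adjunction and the exactness properties already established.
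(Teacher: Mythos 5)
Your proof is correct and follows essentially the same route as the paper: build the component-wise epimorphism $p_*\epi\Forg(M)$ from projectives of $\AA$, apply the right exact functor $\Fr$, compose with the counit epimorphism $\Fr(\Forg(M))\epi M$, invoke Proposition~\ref{prop:coinduced_abelian} for projectivity of $\Fr(p_*)$, and split the epimorphism for the characterization. Your explicit verification that projectives in $\Mod({\rm Ob}(\CC))$ are exactly the families of projectives is a detail the paper leaves implicit, but it is needed and correctly argued.
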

\begin{proof}
Consider a $\CC$-module $M$ and apply the forgetful functor $\Forg(M)=(M_c)_{c\in {\rm Ob}(\CC)}.$ Then for each $c$ there exists an epimorphism from a projective module $p_c\epi M_c.$ Therefore, we have an epimorphism in the category of ${\rm Ob}(\CC)$-modules $p_*\epi \Forg(M).$ Since $\Fr=\iota_!$ is right exact, the morphism $\Fr(p_*) \epi \Fr(\Forg(M))$ is an epimorphism. Composing with the epimorphism \eqref{eq:fr_forg_epi}, we obtain an epimorphism $\Fr(p_*)\epi M.$ Proposition \ref{prop:coinduced_abelian} implies that $\Fr(p_*)$ is projective. Then $\Mod(\CC)$ has enough of projectives. Note that, if $M$ is a projective $\CC$-module, then the epimorphism $\Fr(p_*)\epi M$ splits, and $M$ is a direct summand of $\Fr(p_*).$ 
\end{proof}

Similarly we can prove the following. 

\begin{proposition}\label{prop:injectives}
Assume that $\AA$ has enough of injectives. Then $\Mod(\CC)$ has enough of injectives. Moreover, a $\CC$-module is injective if and only if it is a direct summand of a module $\CoFr(i_*)$ such that $i_c$ is an injective object of $\AA$ for any $c.$
\end{proposition}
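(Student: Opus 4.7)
The plan is to dualize the argument used for Proposition \ref{prop:projectives}, replacing epimorphisms from free modules by monomorphisms into cofree modules, and swapping the roles of left adjoints and right adjoints.

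First, I would start with an arbitrary $\CC$-module $M$ and apply the forgetful functor to obtain $\Forg(M)=(M_c)_{c\in {\rm Ob}(\CC)}$ as an ${\rm Ob}(\CC)$-module. Since $\AA$ has enough of injectives, for each object $c$ there is a monomorphism $M_c\mono i_c$ into an injective object $i_c$ of $\AA$. Since ${\rm Ob}(\CC)$ is a discrete $\AA$-category, $\Mod({\rm Ob}(\CC))$ is a product of copies of $\AA$, so this collection assembles into a monomorphism $\Forg(M)\mono i_*$ in $\Mod({\rm Ob}(\CC))$.

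Next, applying $\CoFr=\iota_*$, which is left exact by Proposition \ref{prop:coinduced_abelian}(1), produces a monomorphism $\CoFr(\Forg(M))\mono \CoFr(i_*)$. Composing with the monomorphism $M\mono \CoFr(\Forg(M))$ from \eqref{eq:fr_forg_epi} yields a monomorphism $M\mono \CoFr(i_*)$. To see that the target is injective, note that in the discrete category $\Mod({\rm Ob}(\CC))$ the object $i_*$ is injective, since the evaluation functors are exact and each $i_c$ is injective in $\AA$. Then Proposition \ref{prop:coinduced_abelian}(3) guarantees that $\CoFr(i_*)=\iota_*(i_*)$ is injective in $\Mod(\CC)$. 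This proves that $\Mod(\CC)$ has enough of injectives.

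For the characterization, if $M$ is injective then the monomorphism $M\mono \CoFr(i_*)$ constructed above splits by the definition of injectivity, making $M$ a direct summand of $\CoFr(i_*)$ with each $i_c$ injective. Conversely, any direct summand of such a module is injective because injectives are closed under direct summands and $\CoFr(i_*)$ is injective as observed. No step here presents a genuine obstacle, since every ingredient is already in place: the existence of enough injectives in $\Mod({\rm Ob}(\CC))$ is immediate from the product description, the key left-exactness and injective-preservation of $\CoFr=\iota_*$ comes from Proposition \ref{prop:coinduced_abelian}, and the monomorphism $M\mono \CoFr(\Forg(M))$ is recorded in \eqref{eq:fr_forg_epi}.
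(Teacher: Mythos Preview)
Your proof is correct and follows exactly the dual of the argument for Proposition~\ref{prop:projectives}, which is precisely what the paper intends (its own proof is just ``Similarly we can prove the following''). You even spell out the converse direction of the ``if and only if'' explicitly, which the paper leaves implicit.
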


\subsection{Flat and projective categories}

We say that an object $a$ of $\AA$ is $\AA$-flat (resp $\AA$-projective), if $a\otimes - $ (resp. $\AA(a,-)$) is an exact functor. We say that a $\AA$-category is $\AA$-flat (resp. $\AA$-projective), if $\CC(c,c')$ is $\AA$-flat (resp. $\AA$-projective) for any objects $c,c'.$

\begin{lemma}\label{lemma:Fr_exact}
If $\CC$ is $\AA$-flat (resp. $\AA$-projective), then ${\rm Fr}$ (resp. ${\rm CoFr}$) is an exact functor.
\end{lemma}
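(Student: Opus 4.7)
The plan is to reduce exactness in $\Mod(\CC)$ to pointwise exactness in $\AA$ (using Proposition \ref{prop:limits_of_modules}), use the explicit formulas \eqref{eq:fre_explicit} for $\Fr$ and $\CoFr$, and then exploit the flatness/projectivity hypothesis together with the axioms ${\rm (AB4)}$ and ${\rm (AB4^*)}$ in $\AA$.

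For $\Fr$: Since $\Fr$ is a left adjoint (see \eqref{eq:adj_free}), it is already right exact, so it suffices to show that $\Fr$ preserves monomorphisms, or equivalently, that it sends short exact sequences to short exact sequences. Let $0\to a_*\to b_*\to c_*\to 0$ be a short exact sequence in $\Mod({\rm Ob}(\CC))$; by the discrete description of $\Mod({\rm Ob}(\CC))$ this amounts to short exact sequences $0\to a_x\to b_x\to c_x\to 0$ in $\AA$ for every $x$. By Proposition \ref{prop:limits_of_modules}, exactness in $\Mod(\CC)$ is detected pointwise, so I would evaluate $\Fr(-)$ at an arbitrary object $d\in\CC$ using \eqref{eq:fre_explicit}:
\[
\Fr(a_*)(d)\;\cong\;\coprod_{c}\, a_c\otimes \CC(c,d).
\]
For each fixed $c$, the functor $-\otimes \CC(c,d)$ is exact because $\CC(c,d)$ is $\AA$-flat by hypothesis, so $0\to a_c\otimes\CC(c,d)\to b_c\otimes\CC(c,d)\to c_c\otimes\CC(c,d)\to 0$ is short exact. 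Now take the coproduct over $c$: by ${\rm (AB4)}$ in $\AA_0$, small coproducts of short exact sequences are short exact, hence the evaluation of $\Fr$ at $d$ is short exact. This holds for every $d$, so $\Fr$ is exact.

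For $\CoFr$: This is dual. Now $\CoFr$ is a right adjoint, hence already left exact, and it suffices to prove preservation of epimorphisms. Given a short exact sequence $0\to a_*\to b_*\to c_*\to 0$ in $\Mod({\rm Ob}(\CC))$, evaluate $\CoFr$ at $d\in\CC$ using \eqref{eq:fre_explicit}:
\[
\CoFr(a_*)(d)\;\cong\;\prod_{c}\, \AA(\CC(d,c),a_c).
\]
For each fixed $c$, the functor $\AA(\CC(d,c),-)$ is exact because $\CC(d,c)$ is $\AA$-projective. Taking products over $c$ and invoking ${\rm (AB4^*)}$, we conclude that the evaluation at $d$ is short exact, and therefore so is the sequence in $\Mod(\CC)$.

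The only delicate point is verifying that the pointwise formulas \eqref{eq:fre_explicit} do compute $\Fr$ and $\CoFr$ after evaluation at an object $d$, and that evaluation commutes with the coproducts/products appearing in these formulas — but this is exactly Proposition \ref{prop:limits_of_modules}. Once that is in hand, the argument is a routine combination of the hypothesis with ${\rm (AB4)}$ and ${\rm (AB4^*)}$, with no real obstacle.
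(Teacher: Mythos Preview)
Your proof is correct and follows the same approach as the paper: the paper's argument is the one-line remark that exactness follows from the explicit formulas \eqref{eq:fre_explicit} together with axioms ${\rm (AB4)}$ and ${\rm (AB4^*)}$, and you have simply unpacked this by reducing to pointwise exactness via Proposition~\ref{prop:limits_of_modules} and invoking the flatness/projectivity hypothesis on each factor before taking the (co)product.
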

\begin{proof}
 It follows from the formulas  \eqref{eq:fre_explicit} and axioms ${\rm (AB4)},$ ${\rm (AB4^*)}.$  
\end{proof}

\begin{lemma}\label{lemma:sets}
Let $X$ and $Y$ be sets and $f:X\to Y$ be a map. If we treat $X$ and $Y$ as discrete $\AA$-categories, and $f$ as an $\AA$-functor, then the functors $f_!,f_*:\Mod(X)\to \Mod(Y)$ can be described by the formulas 
\begin{equation}
(f_!(a_*))_y = \coprod_{x\in f^{-1}(y)} a_x, \hspace{1cm} (f_*(a_*))_y=\prod_{x\in f^{-1}(y)} a_x. 
\end{equation}
\end{lemma}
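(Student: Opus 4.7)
The plan is to directly compute the (co)end formulas \eqref{eq:end_formulas} for $f_!$ and $f_*$, exploiting the discreteness of both $X$ and $Y$ as $\AA$-categories to collapse everything.

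First, specializing \eqref{eq:end_formulas} to $\CC = X$, $\DD = Y$ and evaluating at $y \in Y$ via Proposition \ref{prop:limits_of_modules} gives
\begin{equation*}
(f_!(a_*))_y \cong \int^{x} Y(fx, y) \otimes a_x, \qquad (f_*(a_*))_y \cong \int_{x} \AA(Y(y, fx), a_x).
\end{equation*}
Next I use that $Y$ is discrete: $Y(y, y') \cong \1$ if $y = y'$ and $Y(y, y') \cong 0$ otherwise. Since $- \otimes a$ preserves colimits by Proposition \ref{prop:additive-functors} (so $0 \otimes a \cong 0$), and since $\AA(\1, a) \cong a$ and $\AA(0, a) \cong 0$ (again by Proposition \ref{prop:additive-functors}, as $\AA(-, a)$ sends colimits to limits and $0$ is the initial object), the integrands simplify to: $a_x$ when $fx = y$ and $0$ otherwise, in both cases.

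Then I invoke the (co)equalizer presentations \eqref{eq:end}, \eqref{eq:coend} and use that $X$ is discrete, so $X(x, x') \cong 0$ for $x \ne x'$ and $X(x, x) \cong \1$. For the off-diagonal summands/factors in the parallel pair, the target objects are $\AA(0, -) = 0$ or $0 \otimes - = 0$, making those constraints trivial. On the diagonal, both $\theta_{x,x}$ and $\theta'_{x,x}$ arise as double adjoints of the identity $\1 \to \AA(\FF(x,x), \FF(x,x))$ coming from $\FF(x, -)$ and $\FF(-, x)$ evaluated at $x$, hence they coincide. Therefore the coequalizer collapses to $\coprod_x \FF(x,x)$ and the equalizer to $\prod_x \FF(x,x)$. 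Combined with the previous simplification of the integrand, this yields
\begin{equation*}
(f_!(a_*))_y \cong \coprod_{x \in f^{-1}(y)} a_x, \qquad (f_*(a_*))_y \cong \prod_{x \in f^{-1}(y)} a_x,
\end{equation*}
as required.

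The only real obstacle is bookkeeping: carefully verifying that both parallel maps in the (co)equalizer agree on the $(x,x)$-diagonal (which reduces to unpacking the definition of $\theta, \theta'$ from \eqref{eq:end} in the case $X(x,x) \cong \1$), and that the vanishing of $X(x,x')$ for $x \ne x'$ really forces the off-diagonal terms to be $0$ rather than merely trivially constrained. As an alternative sanity check one can bypass the (co)ends entirely and verify the claim via uniqueness of adjoints: $\Hom_Y\!\bigl(\coprod_{x \in f^{-1}(y)} a_x,\, b_*\bigr) \cong \prod_x \AA_0(a_x, b_{fx}) \cong \Hom_X(a_*, f^*b_*)$, and dually for $f_*$, so both candidates exhibit the correct universal property.
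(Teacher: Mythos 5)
Your proposal is correct and takes essentially the same route as the paper: the paper's entire proof is the one-line remark that the lemma follows from the (co)end formulas \eqref{eq:end_formulas}, and you have simply carried out that computation in detail (the discreteness of $Y$ kills all terms with $fx \neq y$, and the discreteness of $X$ collapses the (co)equalizers \eqref{eq:end}, \eqref{eq:coend} to the plain (co)product over the diagonal). The closing adjunction argument is a valid independent check but not needed.
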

\begin{proof}
It follows from \eqref{eq:end_formulas}. 
\end{proof}

\begin{proposition}\label{prop:acyclic} 
Let $f:\CC\to \DD$ be a $\AA$-functor between small $\AA$-flat $\AA$-categories. Assume that $\AA$ has enough of projectives. Then for any ${\rm Ob}(\CC)$-module $a_*$ the $\CC$-module $\Fr(a_*)$ is $f_!$-acyclic.
\end{proposition}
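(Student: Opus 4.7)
The plan is to construct an explicit projective resolution of $\Fr(a_*)$ on which $f_!$ acts exactly, and then use the identification of left derived functors with the homology of the complex obtained by applying $f_!$ to that resolution. Since $\AA$ has enough projectives, for each $c\in{\rm Ob}(\CC)$ I choose a projective resolution $p_{\bullet, c}\epi a_c$ in $\AA$; assembling these componentwise produces a projective resolution $P_\bullet \epi a_*$ in $\Mod({\rm Ob}(\CC))$ whose terms $P_n = (p_{n,c})_c$ are levelwise projective. Applying $\Fr=\iota_!$ with $\iota:{\rm Ob}(\CC)\to \CC$ the canonical embedding, Lemma \ref{lemma:Fr_exact} combined with the $\AA$-flatness of $\CC$ yields that $\Fr$ is exact, hence $\Fr(P_\bullet)\epi \Fr(a_*)$ is still a resolution, and by Proposition \ref{prop:projectives} each $\Fr(P_n)$ is projective. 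Therefore $(L_nf_!)(\Fr(a_*)) \cong H_n(f_!\Fr(P_\bullet))$ for all $n\geq 0$.

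The crux is to show that $f_!\Fr$ is an exact functor, even though $f_!$ alone is only right exact. To this end I factor the composite $f\circ\iota:{\rm Ob}(\CC)\to \DD$ through the discrete $\AA$-category ${\rm Ob}(\DD)$: let $g:{\rm Ob}(\CC)\to {\rm Ob}(\DD)$ be the $\AA$-functor induced by $f$ on object sets, and $\iota_\DD:{\rm Ob}(\DD)\to \DD$ the canonical embedding, so that $f\iota = \iota_\DD g$. The compatibility \eqref{eq:induced_comp} of extension with composition then gives
\begin{equation*}
f_!\Fr \;=\; f_!\iota_! \;\cong\; (f\iota)_! \;=\; (\iota_\DD g)_! \;\cong\; (\iota_\DD)_!\, g_! \;=\; \Fr_\DD\circ g_!,
\end{equation*}
where $\Fr_\DD = (\iota_\DD)_!$ denotes the free module functor for $\DD$.

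Finally, Lemma \ref{lemma:sets} identifies $g_!$ with the fiberwise coproduct construction $(g_!(b_*))_d = \coprod_{c\in g^{-1}(d)} b_c$, which is exact by axiom ${\rm (AB4)}$ applied pointwise in $\AA$; and Lemma \ref{lemma:Fr_exact} together with the $\AA$-flatness of $\DD$ ensures that $\Fr_\DD$ is exact. Hence $f_!\Fr\cong \Fr_\DD\circ g_!$ is a composition of exact functors, so applying it to $\Fr(P_\bullet)\epi \Fr(a_*)$ produces an exact complex. This forces $H_n(f_!\Fr(P_\bullet))=0$ for $n\geq 1$, which is exactly the $f_!$-acyclicity of $\Fr(a_*)$.

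The main obstacle is conceptual rather than technical: one has to recognise that although $f_!$ is only right exact in general, its precomposition with $\Fr$ becomes exact because it factors through the discrete category ${\rm Ob}(\DD)$; the two $\AA$-flatness hypotheses on $\CC$ and $\DD$ are used precisely once each, to make $\Fr$ and $\Fr_\DD$ exact, and the axiom ${\rm (AB4)}$ is what handles the ``middle'' functor $g_!$ between discrete module categories.
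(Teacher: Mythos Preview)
Your proof is correct and follows essentially the same argument as the paper: build a projective resolution of $a_*$ in $\Mod({\rm Ob}(\CC))$, apply $\Fr$ to obtain a projective resolution of $\Fr(a_*)$, and then use the factorisation $f_!\Fr\cong \Fr_\DD\circ g_!$ (the paper writes $g={\rm Ob}(f)$) together with Lemmas~\ref{lemma:Fr_exact} and~\ref{lemma:sets} to conclude that $f_!\Fr$ is exact. One small slip: in the final paragraph you write ``applying it to $\Fr(P_\bullet)$'' where ``it'' is $f_!\Fr$, but you mean applying $f_!$ to $\Fr(P_\bullet)$ (equivalently $f_!\Fr$ to $P_\bullet$); the conclusion is unaffected.
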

\begin{proof}
Denote by $(p_*)_\bullet$ a projective resolution of $a_*$ in $\Mod({\rm Ob}(\CC)).$ Using Lemma \ref{lemma:Fr_exact} and Proposition \ref{prop:projectives} we obtain that $\Fr((p_*)_\bullet)$ is a projective resolution of $\Fr(a_*).$ Then $L_nf_!(\Fr(a_*))$ is equal to the homology of the chain complex $f_!(\Fr((p_*)_\bullet)).$ The commutativity of the diagram 
\begin{equation}
\begin{tikzcd}
{\rm Ob}(\CC) \ar[r,"\iota"] \ar[d,"{\rm Ob}(f)"] & \CC \ar[d,"f"] \\
{\rm Ob}(\DD)\ar[r,"\iota"] & \DD 
\end{tikzcd}
\end{equation}
and equations \eqref{eq:induced_comp} imply that $f_!\ \Fr = \Fr \ {\rm Ob}(f)_!.$ Lemma \ref{lemma:sets} implies that the functor ${\rm Ob}(f)_!$ is exact. Lemma \ref{lemma:Fr_exact} implies that $\Fr$ is exact.  Therefore $f_!\Fr$ is also exact. Hence $H_n(f_!(\Fr((p_*)_\bullet))) =0$ for $n\neq 0.$ 
\end{proof}

Similarly we obtain the following. 

\begin{proposition} 
Let $f:\CC\to \DD$ be a $\AA$-functor between small $\AA$-projective $\AA$-categories. Assume that $\AA$ has enough of injectives. Then for any ${\rm Ob}(\CC)$-module $a_*$ the $\CC$-module $\CoFr(a_*)$ is $f_*$-acyclic.
\end{proposition}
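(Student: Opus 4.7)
The plan is to dualize the proof of Proposition \ref{prop:acyclic} step by step, replacing projective resolutions by injective coresolutions, $\Fr$ by $\CoFr$, $f_!$ by $f_*$, and coproducts by products.

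First, since $\AA$ has enough of injectives, by Proposition \ref{prop:injectives} the category $\Mod({\rm Ob}(\CC))$ also has enough of injectives, so we may fix an injective coresolution $(i_*)^\bullet$ of the ${\rm Ob}(\CC)$-module $a_*$. Applying $\CoFr$ to this coresolution yields a cochain complex $\CoFr((i_*)^\bullet)$ augmented by $\CoFr(a_*)$. By Lemma \ref{lemma:Fr_exact}, the assumption that $\CC$ is $\AA$-projective implies that $\CoFr$ is an exact functor, and by Proposition \ref{prop:injectives} each $\CoFr(i_*^n)$ is an injective $\CC$-module. Therefore $\CoFr((i_*)^\bullet)$ is an injective coresolution of $\CoFr(a_*)$, and
\[
R^n f_*(\CoFr(a_*)) \;\cong\; H^n\bigl( f_*\CoFr((i_*)^\bullet) \bigr).
\]

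Next, I would establish the identity $f_* \CoFr \cong \CoFr \, {\rm Ob}(f)_*$. This follows by applying $(-)_*$ to the commutative square
\begin{equation*}
\begin{tikzcd}
{\rm Ob}(\CC) \ar[r,"\iota"] \ar[d,"{\rm Ob}(f)"'] & \CC \ar[d,"f"] \\
{\rm Ob}(\DD) \ar[r,"\iota"] & \DD
\end{tikzcd}
\end{equation*}
together with the composition formula $(gf)_* \cong g_* f_*$ from \eqref{eq:induced_comp}, exactly as in the proof of Proposition \ref{prop:acyclic}.

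It then remains to see that the composition $f_* \CoFr \cong \CoFr \, {\rm Ob}(f)_*$ is exact. By Lemma \ref{lemma:sets} the functor ${\rm Ob}(f)_*$ is the product $({\rm Ob}(f)_*(a_*))_y = \prod_{x\in f^{-1}(y)} a_x$, which is exact because $\AA$ satisfies axiom ${\rm (AB4^*)}$ (products of short exact sequences are exact, componentwise in ${\rm Mod}({\rm Ob}(\DD))$). Combined with exactness of $\CoFr$ (Lemma \ref{lemma:Fr_exact}), this shows $f_* \CoFr$ is exact. Hence $f_* \CoFr((i_*)^\bullet)$ is an exact cochain complex in positive degrees, so $H^n(f_*\CoFr((i_*)^\bullet)) = 0$ for $n \neq 0$, proving that $\CoFr(a_*)$ is $f_*$-acyclic.

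The main obstacle is essentially bookkeeping rather than conceptual: one must be confident that the identity $f_*\CoFr \cong \CoFr\,{\rm Ob}(f)_*$ is obtained as a genuine isomorphism of functors from the Kan-extension composition formula, and that the exactness statements for $\CoFr$ and for ${\rm Ob}(f)_*$ indeed appeal to the correct Grothendieck axiom ${\rm (AB4^*)}$ (rather than to ${\rm (AB4)}$, as was used in the dual argument).
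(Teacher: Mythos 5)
Your proof is correct and is exactly the dualization the paper intends when it writes ``Similarly we obtain the following'': injective coresolution in place of projective resolution, $\CoFr=\iota_*$ exact by Lemma \ref{lemma:Fr_exact} under the $\AA$-projectivity hypothesis, the identity $f_*\,\CoFr\cong\CoFr\,{\rm Ob}(f)_*$ from the commutative square and \eqref{eq:induced_comp}, and exactness of ${\rm Ob}(f)_*$ via Lemma \ref{lemma:sets} and ${\rm (AB4^*)}$. No gaps.
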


\subsection{Bar resolution}

For a small $\AA$-category $\CC$ we consider the adjunction \eqref{eq:adj_free}
\begin{equation}
\Fr: \Mod({\sf Ob}(\CC)) \rightleftarrows \Mod(\CC) : \Forg.
\end{equation}
For a ${\sf Ob}(\CC)$-module $a_*=(a_c)_{c\in {\sf Ob}(\CC)}$ we have
$\Fr(a_*) \cong  \coprod_{c} a_c \otimes\CC(c,-).$
The unit of the adjunction $h:a_*\to \Forg(\Fr(a_*))$ is 
defined as the composition
\begin{equation}
a_c\longrightarrow a_c \otimes \1\longrightarrow a_c\otimes  \CC(c,c)\hookrightarrow \coprod_{c'}a_{c'} \otimes \CC(c,c'). 
\end{equation}
The counit $e:\Fr(\Forg(M))\to M$ 
\begin{equation}
e_{c'}:\coprod_c Mc\otimes \CC(c,c') \longrightarrow Mc',
\end{equation}
has components, which are adjoint to  $M_{c,c'}:\CC(c,c')\to \AA(M_c,M_c').$

Consider the associated comonad on the category of $\CC$-modules
\begin{equation}
\overline\Fr:\Mod(\CC)\longrightarrow \Mod(\CC), 
\hspace{1cm}
\overline\Fr(M) = \coprod_c Mc \otimes \CC(c,-).
\end{equation}
The comultiplication $\delta:\overline\Fr \to \overline\Fr^2$ is defined by the whiskering $\delta= \Fr \: h\: \Forg.$

We denote by ${\rm Bar}_\bullet(M)$ the bar construction associated with this comonad \cite[\S 1]{ivanov2021homology}, \cite{appelgate1969homology}. So ${\rm Bar}_\bullet(M)$ is a simplicial $\CC$-module such that 
${\rm Bar}_n(M) = \overline\Fr^{n+1}(M).$ 
By induction we obtain 
\begin{equation}
{\rm Bar}_n(M)=\coprod_{c_0,\dots,c_n} Mc_0 \otimes \CC(c_0,c_1) \otimes \dots \otimes  \CC(c_{n-1},c_n)\otimes \CC(c_n,-).
\end{equation}
The face maps are defined by the formula $d_i=\overline\Fr^i\: e \: \overline\Fr^{n-i},$ and the degeneracy maps are defined by $s_i=\overline\Fr^i\: \delta \: \overline\Fr^{n-i}.$ 

Let us give a more detailed description of the face maps $d_i:{\rm Bar}_n(M)\to {\rm Bar}_{n-1}(M).$ The face map $d_0$ is induced by the evaluation map $Mc_0\otimes \CC(c_0,c_1)\to Mc_1.$ For $0<i<n$ the face map $d_i$ is induced by the map 
\begin{equation}\label{eq:comp}
\CC(c_{i-1},c_{i}) \otimes \CC(c_i,c_{i+1}) \overset{\cong} \longrightarrow \CC(c_{i+1},c_i)\otimes \CC(c_i,c_{i-1}) \overset{\circ}\longrightarrow \CC(c_{i-1},c_{i+1}).
\end{equation}
And similarly, the map $d_n$ is induced by the map 
\begin{equation}\label{eq:comp2}
\CC(c_{i-1},c_{n}) \otimes \CC(c_n,-) \overset{\cong} \longrightarrow \CC(-,c_n)\otimes \CC(c_n,c_{n-1}) \overset{\circ}\longrightarrow \CC(c_{i-1},-).
\end{equation}

For any simplicial object $A_\bullet$ in an abelian category, we denote by $C(A)$ its non-normalised chain complex, and by $N(A)$ its normalised chain complex. In particular, we consider the chain complexes $C({\rm Bar}_\bullet(M))$ and $N({\rm Bar}_\bullet(M)).$ We will also use notation $H_*(A)=H_*(C(A))=H_*(N(A)).$

\begin{proposition}\label{prop:bar_resolution}
Let $f:\CC\to \DD$ be a $\AA$-functor between small $\AA$-flat $\AA$-categories. Assume that $\AA$ has enough of projectives.
Then the chain complex of the bar construction $C({\rm Bar}_\bullet(M))$ is an $f_!$-acyclic resolution of $M.$ In particular, the left derived functors of $f_!$ can be described as
\begin{equation}
L_n f_! (M) \cong H_n(f_!{\rm Bar}_\bullet(M)).
\end{equation}
\end{proposition}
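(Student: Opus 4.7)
The claim breaks into three parts: (i) the augmented complex $C({\rm Bar}_\bullet(M))\to M \to 0$ is exact, so ${\rm Bar}_\bullet(M)$ is a resolution of $M$; (ii) each ${\rm Bar}_n(M)$ is $f_!$-acyclic; and (iii) the derived functors of $f_!$ can be computed from this $f_!$-acyclic resolution. I would carry out the three steps in this order.

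For (i), the strategy is the classical ``extra degeneracy'' argument for bar resolutions arising from an adjunction. Since $\Forg\,\overline{\Fr}^{\,n+1}(M) = (\Forg\,\Fr)^{n+1}\Forg(M)$, the unit $h:{\rm id}\to \Forg\,\Fr$ of the adjunction $\Fr\dashv \Forg$ supplies, dimension by dimension, morphisms $\Forg\,{\rm Bar}_n(M)\to \Forg\,{\rm Bar}_{n+1}(M)$ which, together with $\Forg(e):\Forg\,{\rm Bar}_0(M)\to \Forg(M)$, constitute an extra degeneracy on the augmented simplicial object $\Forg\,{\rm Bar}_\bullet(M)\to \Forg(M)$. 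A standard simplicial computation then produces a chain contraction of its augmented (un)normalised chain complex, so this complex is exact. To transfer exactness back to $\Mod(\CC)$, I would invoke Proposition \ref{prop:limits_of_modules}: kernels, cokernels, and images in $\Mod(\CC)$ are computed objectwise, so a sequence of $\CC$-modules is exact iff it becomes exact after applying $\Forg$. This yields the exactness of $C({\rm Bar}_\bullet(M))\to M \to 0$.

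For (ii), note that ${\rm Bar}_n(M) = \Fr(\Forg(\overline{\Fr}^{\,n}(M)))$, so each ${\rm Bar}_n(M)$ is of the form $\Fr(a_*)$ for an appropriate ${\rm Ob}(\CC)$-module $a_*$. Proposition \ref{prop:acyclic}, whose hypotheses (flatness of $\CC$ over $\AA$ and enough projectives in $\AA$) match those of our statement, then gives that every such $\Fr(a_*)$ is $f_!$-acyclic. For (iii) I would invoke the standard homological-algebra fact that the left derived functors of a right exact functor may be computed from any resolution by acyclic objects, not only from projective resolutions (proved via a Cartan--Eilenberg double-complex comparison with a projective resolution). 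Combining the three steps gives $L_nf_!(M)\cong H_n(f_!\,C({\rm Bar}_\bullet(M))) = H_n(f_!\,{\rm Bar}_\bullet(M))$. The main technical obstacle is step (i): writing the extra-degeneracy data out in the enriched setting and confirming the simplicial identities, then using the pointwise-exactness criterion for $\Mod(\CC)$; once this is done, the remaining two steps are essentially citations of Proposition \ref{prop:acyclic} and textbook homological algebra.
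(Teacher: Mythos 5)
Your proposal is correct and follows essentially the same route as the paper: an extra degeneracy on $\Forg({\rm Bar}_\bullet(M))\to\Forg(M)$ coming from the unit of $\Fr\dashv\Forg$, transfer of exactness along the faithful exact $\Forg$, acyclicity of each ${\rm Bar}_n(M)=\Fr(a_*)$ via Proposition \ref{prop:acyclic}, and the standard fact that acyclic resolutions compute derived functors. The only cosmetic slip is that the degree $-1$ piece of the extra degeneracy is the section $\Forg(M)\to\Forg({\rm Bar}_0(M))$ given by the unit, not the augmentation $\Forg(e)$ itself.
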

\begin{proof}
By \cite[Lemma 1.2]{ivanov2021homology} the augmented simplicial object $\Forg({\rm Bar}^\bullet(M)) \to \Forg(M)$ has an extra degeneracy map. Therefore the chain complex $C(\Forg({\rm Bar}^\bullet(M)))$ is homotopy equivalent to $\Forg(M).$ Since $\Forg(-)$ is faithful and exact, and $C(-)$ commutes with any additive functor, we obtain that $H_n({\rm Bar}_\bullet(M))=0$ for $n \ne 0$ and $H_0({\rm Bar}_\bullet(M))\cong M.$ The fact that ${\rm Bar}_\bullet(M)$ is $f_!$-acyclic follows from Proposition \ref{prop:acyclic}.
\end{proof}

Dually, we consider the adjunction 
\begin{equation}
\Forg: \Mod(\CC) \rightleftarrows \Mod({\rm Ob}(\CC)): \CoFr,
\end{equation}
where
$\CoFr(a_*) = \prod_c \AA(\CC(-,c),a_c). $
The counit of the adjunction is defined as the composition 
\begin{equation}\label{eq:counit_forg-CoFr}
\prod_{c'} \AA(\CC(c',c),a_c) \epi \AA(\CC(c,c),a_c) \longrightarrow \AA(\1,a_c) \overset{\cong}\longrightarrow a_c. 
\end{equation}
The unit of the adjunction 
\begin{equation}\label{eq:unit_forg-CoFr}
M{c'} \longrightarrow \prod_c \AA(\CC(c',c),Mc)
\end{equation}
is induced by the morphisms adjoint to $M_{c',c}:\CC(c',c)\to \AA(Mc',Mc).$ The corresponding monad is defined by the formula
\begin{equation}\overline\CoFr:\Mod(\CC)\longrightarrow \Mod(\CC),\hspace{1cm}
\overline\CoFr(M) = \prod_c \AA(\CC(-,c),Mc).
\end{equation}
Consider the corresponding cobar cosimplicial endofunctor on $\Mod(\CC)$ such that ${\rm CoBar}^n(M)=\overline\CoFr^{n+1}(M).$ 
Using the isomorphism
\begin{equation}
\AA(\CC(-,c), \AA(\CC(c,c'),Mc') ) \cong \AA(\CC(-,c)\otimes \CC(c,c'),Mc') 
\end{equation}
by induction we obtain 
\begin{equation}
{\rm CoBar}^{n}(M) \cong  \prod_{c_0,\dots,c_n}\AA(\CC(-,c_0)\otimes \dots \otimes \CC(c_{n-1},c_n),Mc_n).
\end{equation}
The coface maps $d^i:{\rm CoBar}^n(M)\to {\rm CoBar}^{n+1}(M)$ for $0\leq i\leq n+1$ are defined as follows. For $0\leq i< n+1$ they are dual to the morphisms induced by the composition \eqref{eq:comp2}, \eqref{eq:comp}. The coface map $d^{n+1}$ is induced by the unit \eqref{eq:unit_forg-CoFr} $Mc_n \to \prod_{c_{n+1}} \AA(\CC(c_n,c_{n+1}),M_{c_{n+1}}).$
The codegenerasy maps $s^i:{\rm CoBar}^n(M)\to {\rm CoBar}^{n-1}(M)$ are induced by the counit \eqref{eq:counit_forg-CoFr}.

\begin{proposition}
Let $f:\CC\to \DD$ be a $\AA$-functor between small $\AA$-projective $\AA$-categories. Assume that $\AA$ has enough of injectives.
Then the chain complex of the cobar construction $C({\rm CoBar}^\bullet(M))$ is an $f_*$-acyclic resolution of $M.$ In particular, the right derived functors of $f_*$ can be described as
\begin{equation}
R^n f_* (M) \cong H^n(f_*{\rm CoBar}_\bullet(M)).
\end{equation}
\end{proposition}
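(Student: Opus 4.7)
The plan is to dualize the proof of Proposition \ref{prop:bar_resolution} in a formal, step-by-step manner. First I would establish that $\text{CoBar}^\bullet(M)$ augmented by $M \to \overline{\CoFr}(M)$ (the unit of the monad) forms an augmented cosimplicial $\CC$-module whose image under $\Forg$ admits an extra \emph{codegeneracy}: this is the exact dual of the extra degeneracy appearing in \cite[Lemma 1.2]{ivanov2021homology} for the comonadic bar construction, and the proof is purely formal from the counit/unit identities of the adjunction $\Forg \dashv \CoFr$. Consequently, the augmented cochain complex $C(\Forg(\text{CoBar}^\bullet(M)))$ is cochain-homotopy equivalent to $\Forg(M)$ placed in degree $0$.

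Next I would transport this acyclicity back to $\Mod(\CC)$. Since the normalised/unnormalised cochain complex construction $C(-)$ commutes with any additive functor, and since $\Forg$ is exact (it is the pullback functor $\iota^*$, hence exact by Proposition \ref{prop:coinduced_abelian}) and faithful (by Proposition \ref{prop:surjective_on_objects}, as $\iota$ is surjective on objects), the vanishing and computation of the cohomology of $\Forg(\text{CoBar}^\bullet(M))$ pulls back to give $H^0(\text{CoBar}^\bullet(M)) \cong M$ and $H^n(\text{CoBar}^\bullet(M)) = 0$ for $n \neq 0$. In other words, the augmentation $M \to C(\text{CoBar}^\bullet(M))$ is a resolution of $M$ in $\Mod(\CC)$.

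It remains to check that each $\text{CoBar}^n(M) = \overline{\CoFr}^{n+1}(M)$ is $f_*$-acyclic. By construction each such term is of the shape $\CoFr(a_*)$ for an appropriate ${\rm Ob}(\CC)$-module $a_*$ (explicitly, one takes $a_* = \Forg(\overline{\CoFr}^n(M))$), and the proposition immediately preceding the statement — the $f_*$-version of Proposition \ref{prop:acyclic}, which uses the $\AA$-projective hypothesis on $\CC$ and enough injectives in $\AA$ — tells us precisely that $\CoFr(a_*)$ is $f_*$-acyclic. Combining the resolution obtained in the previous paragraph with this acyclicity yields that $C(\text{CoBar}^\bullet(M))$ is an $f_*$-acyclic resolution of $M$, and therefore $R^nf_*(M)$ is computable as $H^n(f_* \text{CoBar}^\bullet(M))$.

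The only genuinely non-formal ingredient is the existence of the extra codegeneracy, and even this is obtained simply by dualising the diagrammatic argument for the bar construction; no new calculation is needed. Every other step is either an appeal to a previously proven proposition (exactness/faithfulness of $\Forg$, acyclicity of $\CoFr(a_*)$) or a routine compatibility between additive functors and (co)chain complexes of (co)simplicial objects.
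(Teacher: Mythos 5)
Your proposal is correct and is exactly the dualization the paper intends: its own proof of this proposition is just ``Similar to Proposition \ref{prop:bar_resolution},'' and your argument (extra codegeneracy after applying $\Forg$, transport of acyclicity via exactness and faithfulness of $\Forg$, and $f_*$-acyclicity of the cofree terms via the dual of Proposition \ref{prop:acyclic}) is precisely the step-by-step dual of that proof. No gaps.
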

\begin{proof}
Similar to Proposition  \ref{prop:bar_resolution}.
\end{proof}

\subsection{Homology of augmented categories}

Assume that $\CC$ is an augmented $\AA$-category with the augmentation $\varepsilon:\CC\to \pt$ (Subsection \ref{subsection:nerve}). Note that $\Mod(\pt)\cong \AA.$ Therefore the augmentation $\varepsilon$ induces functors of the trivial module, invariants and coinvariants
\begin{equation}
\Triv: \AA \to \Mod(\CC), \hspace{1cm}  \Coinv, \Inv: \Mod(\CC) \to \AA 
\end{equation}
defined by the formulas
\begin{equation}\label{eq:triv_inv_coinv}
\Triv = \varepsilon^*, \hspace{5mm} \Coinv= \varepsilon_!,\hspace{5mm} \Inv=\varepsilon_*.
\end{equation}
Then there are adjunctions
\begin{equation}
    \Coinv \dashv \Triv \dashv \Inv, 
\end{equation}
and Proposition \ref{prop:surjective_on_objects} implies that $\Triv$ is a faithfull functor. 

If we expand the definition of the functor of trivial module, we obtain that for $a\in \AA$ we have 
\begin{equation}\label{eq:trivial}
\Triv(a)(c) = a
\end{equation}
and the morphism $\Triv(a)_{c,c'} $ is defined as the composition 
$\CC(c,c')\to \1 \to \AA(a,a).$ 

By \eqref{eq:end_formulas} the functor of coinvariants can be described as the coequaliser 
\begin{equation}\label{eq:coinv}
\coprod_{c,c'} M c \otimes \CC(c,c')  \rightrightarrows \coprod_c M c  \to \Coinv(M),
\end{equation}
and the functor of invariants as the equaliser
\begin{equation}
\Inv(M) \to \prod_c  M c \rightrightarrows \prod_{c,c'} \VV( \CC(c,c'), M c').
\end{equation}
The commutativity of the diagram
\begin{equation}
\begin{tikzcd}
{\rm Ob}(\CC) \ar[r,"\iota"] \ar[rd] & \CC \ar[d,"\varepsilon"] \\
& \pt
\end{tikzcd} 
\end{equation}
and Lemma \ref{lemma:sets} imply the isomorphisms
\begin{equation}\label{eq:coinv_free}
\Coinv(\Fr(a_*)) \cong  \coprod_c a_c, \hspace{1cm} \Inv(\CoFr(a_*)) \cong  \prod_c a_c.
\end{equation}

\begin{example}
If $\AA={\rm Ab}$ is the category of abelian groups, $G$ is a group and $\CC=\ZZ[G]$ is the group algebra, then $\CC$-modules are $\ZZ[G]$-modules in the usual sense, and and the functor of invariants and coinvariants coincide with the usual functors
\begin{equation}
\Inv(M)= \{m\in M\mid gm=m\},
\end{equation}
\begin{equation}
\Coinv(M) = M/\langle gm-m\mid g\in G, m\in M \rangle.
\end{equation}
\end{example}

Assume that $\AA$ has enough of projectives. We define homology of an augmented $\AA$-category $\CC$ as 
\begin{equation}
H_n(\CC,M) = L_n\Coinv (M).
\end{equation} 
Homology with the trivial coefficients are denoted by 
\begin{equation}
H_n(\CC) = H_n(\CC,\Triv(\1)).
\end{equation}
Similarly, if $\VV$ has enough of injectives, we define cohomology as 
\begin{equation}
H^n(\CC,M)=R^n\Inv(M), \hspace{1cm} H^n(\CC)=H^n(\CC,\Triv(\1)).
\end{equation}

Consider a simplicial object $S_\bullet(\CC,M)$ in $\AA,$ whose components are
\begin{equation}
S_n(\CC,M) = \coprod_{c_0,\dots,c_n} Mc_0\otimes \CC(c_0,c_1)\otimes \dots \otimes \CC(c_{n-1},c_n).
\end{equation}
The face maps $d_i:S_n(\CC)\to S_{n-1}(\CC)$ are defined so that for $0<i<n$ they are induced by  compositions
\begin{equation}\label{eq:d_i}
\CC(c_{i-1},c_i)\otimes \CC(c_i,c_{i+1}) \overset{\cong}\longrightarrow \CC(c_i,c_{i+1}) \otimes \CC(c_{i-1},c_i) \overset{\circ}\longrightarrow \CC(c_{i-1},c_{i+1}).
\end{equation}
The morphism $d_0$ is induced by the map $Mc_0\otimes \CC(c_0,c_1)\to Mc_1$ adjoint to $M_{c_0,c_1}.$ The morphism $d_n$ is induced by the augmentation  $\varepsilon:\CC(c_{n-1},c_n) \to \1.$ The degeneracy maps $s_i:S_n(\CC)\to S_{n+1}(\CC)$ are defined induced by the identity map $\1\to \CC(c_i,c_i).$

\begin{theorem}\label{th:homology_of_augmented}
Let $\CC$ be an augmented $\AA$-flat $\AA$-category. Assume that $\AA$ has enough of projectives. Then for any $\CC$-module $M$ there is an isomorphism
\begin{equation}
H_*(\CC,M) \cong H_*(S_\bullet(\CC,M)).
\end{equation}
\end{theorem}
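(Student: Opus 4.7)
The strategy is to apply Proposition \ref{prop:bar_resolution} to the augmentation $\varepsilon:\CC\to \pt$, so that $\varepsilon_!=\Coinv$, and then to identify the simplicial object $\Coinv({\rm Bar}_\bullet(M))$ with $S_\bullet(\CC,M)$. Since $\CC$ is $\AA$-flat (and $\pt$ is trivially $\AA$-flat) and $\AA$ has enough of projectives, Proposition \ref{prop:bar_resolution} applied to $\varepsilon$ yields
\[
H_n(\CC,M)=L_n\Coinv(M)\cong H_n\bigl(\Coinv({\rm Bar}_\bullet(M))\bigr),
\]
so it suffices to produce a natural isomorphism of simplicial objects $\Coinv({\rm Bar}_\bullet(M))\cong S_\bullet(\CC,M)$.

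For the levelwise identification, each summand of
\[
{\rm Bar}_n(M)=\coprod_{c_0,\dots,c_n} M c_0\otimes \CC(c_0,c_1)\otimes\dots\otimes \CC(c_{n-1},c_n)\otimes \CC(c_n,-)
\]
has the form $a\otimes \CC(c_n,-)$ for a fixed $a\in\AA$, which is precisely $\Fr$ of the family concentrated at $c_n$ with value $a$. Using that $\Coinv$ preserves coproducts (left adjoint), that $a\otimes -$ commutes with $\Coinv$ (apply the colimit-preserving functor $a\otimes -$ to the coequaliser \eqref{eq:coinv}), and that $\Coinv(\Fr(a_*))\cong\coprod_c a_c$ by \eqref{eq:coinv_free}, we obtain $\Coinv({\rm Bar}_n(M))\cong S_n(\CC,M)$. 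The inner faces $d_0,\dots,d_{n-1}$ and all degeneracies of ${\rm Bar}_\bullet(M)$ act only on the factors $Mc_0,\CC(c_0,c_1),\dots,\CC(c_{n-1},c_n)$; they are therefore carried unchanged through $\Coinv$ and match the corresponding operators of $S_\bullet(\CC,M)$.

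The crux is $\Coinv(d_n)$. In ${\rm Bar}_\bullet(M)$ this face is induced by the composition $\CC(c_{n-1},c_n)\otimes\CC(c_n,-)\xrightarrow{\circ}\CC(c_{n-1},-)$, while in $S_\bullet(\CC,M)$ the top face is induced by $\varepsilon:\CC(c_{n-1},c_n)\to \1$. To reconcile them, I would first show that the canonical projection $\CC(c_0,-)\to \Coinv(\CC(c_0,-))\cong\1$ agrees componentwise with the augmentation. By the enriched Yoneda identification $\Hom_\CC(\CC(c_0,-),\Triv(\1))\cong\Triv(\1)(c_0)=\1$, the unit of the adjunction $\Coinv\dashv\Triv$ at $\CC(c_0,-)$ corresponds to $1_\1$, and a direct check (using the identity axiom $\varepsilon\cdot 1_{c_0}=1_\1$ for the augmentation together with $\VV$-naturality, and the fact that the action of $\CC$ on $\Triv(\1)$ factors through $\varepsilon$) forces each of its components to coincide with $\varepsilon$. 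Combined with $\varepsilon:\CC\to\pt$ being a $\VV$-functor (hence compatible with composition), this yields a commuting square
\[
\begin{tikzcd}
\CC(c_{n-1},c_n)\otimes \CC(c_n,c)\ar[r,"\circ"]\ar[d,"1\otimes \varepsilon"'] & \CC(c_{n-1},c)\ar[d,"\varepsilon"] \\
\CC(c_{n-1},c_n)\otimes \1\ar[r,"\varepsilon\otimes 1"'] & \1,
\end{tikzcd}
\]
which identifies $\Coinv(d_n)$ with the face map of $S_\bullet(\CC,M)$ induced by $\varepsilon$.

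The main obstacle lies in this last step: one has to track carefully how $\Coinv$ extracts the augmentation out of the representable module $\CC(c_0,-)$ and confirm its compatibility with composition in $\CC$. The remaining steps are formal consequences of Proposition \ref{prop:bar_resolution} and \eqref{eq:coinv_free}, and once the simplicial isomorphism is established the theorem follows by passing to homology.
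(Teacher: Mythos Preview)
Your approach is correct and coincides with the paper's: apply Proposition~\ref{prop:bar_resolution} to $\varepsilon:\CC\to\pt$ and then identify $\Coinv({\rm Bar}_\bullet(M))$ with $S_\bullet(\CC,M)$. The paper's own proof is extremely terse---it invokes \eqref{eq:coinv_free} to get $\Coinv(\overline\Fr^{\,n+1}(M))\cong\coprod_c\overline\Fr^{\,n}(M)c$, writes down the resulting levelwise isomorphism, asserts the simplicial identification \eqref{eq:coinv_bar}, and cites Proposition~\ref{prop:bar_resolution}---whereas you supply the verification of the simplicial structure that the paper leaves implicit, in particular the identification of the last face with the augmentation via the Yoneda description of $\Hom_\CC(\CC(c_0,-),\Triv(\1))$.

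One small remark on your treatment of the crux: the cleanest way to pin down $\Coinv(d_n)$ is to note that for any $\CC$-module $N$ the map $\Coinv(e_N):\Coinv(\Fr\Forg N)\cong\coprod_cN(c)\to\Coinv(N)$ is, under the adjunction $\coprod\dashv\Delta=\Forg\Triv$, exactly the family $(u_{N,c})_c$ of components of the unit $u_N:N\to\Triv\Coinv N$ (use the triangle identity $\Forg(e_N)\circ h_{\Forg N}=1$). Your computation that $u_{N,c}$ is built from $\varepsilon$ when $N$ is free then finishes the argument directly; the commuting square you draw is precisely the functoriality of $\varepsilon$ needed to see that this matches the top face of $S_\bullet(\CC,M)$.
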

\begin{proof}
The isomorphism \eqref{eq:coinv_free} implies that $\Coinv(\overline \Fr^{n+1}(M))\cong  
\coprod_c \overline\Fr^n(M)c.$ Therefore
\begin{equation}
\Coinv({\rm Bar}_n(M))\cong   \coprod_{c_0,\dots,c_n} Mc_0\otimes 
\CC(c_0,c_1)
\otimes 
\dots \otimes 
\CC(c_{n-1},c_n).
\end{equation}
Using this we obtain
\begin{equation}\label{eq:coinv_bar}
 \Coinv({\rm Bar}_\bullet(M))\cong S_\bullet(\CC,M).   
\end{equation}
So the statement follows from Proposition \ref{prop:bar_resolution}.
\end{proof}

Consider the nerve $\NN(\CC)$ of $\CC$ (Subsection \ref{subsection:nerve}). Note that 
\begin{equation}\label{eq:NN_S}
\NN_\bullet(\CC)\cong S_\bullet(\CC,{\sf Triv}(\1)).
\end{equation}

\begin{corollary}\label{cor:homology}
Under the assumption of Theorem \ref{th:homology_of_augmented} there is an isomorphism
\begin{equation}
H_*(\CC)\cong H_*(\NN_\bullet(\CC)).
\end{equation}
\end{corollary}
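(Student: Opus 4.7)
The plan is to deduce this corollary directly from Theorem \ref{th:homology_of_augmented} by specializing it to the trivial module $M = \Triv(\1)$ and invoking the simplicial isomorphism \eqref{eq:NN_S}. First, by definition of homology with trivial coefficients, $H_n(\CC) = H_n(\CC,\Triv(\1))$. Second, Theorem \ref{th:homology_of_augmented} applied to $M = \Triv(\1)$ gives $H_*(\CC,\Triv(\1)) \cong H_*(S_\bullet(\CC,\Triv(\1)))$. Third, by \eqref{eq:NN_S} we have $S_\bullet(\CC,\Triv(\1)) \cong \NN_\bullet(\CC)$, so that $H_*(S_\bullet(\CC,\Triv(\1))) \cong H_*(\NN_\bullet(\CC))$. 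Composing these three isomorphisms yields the claim.

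The only content beyond quoting the theorem is the identification \eqref{eq:NN_S}, so the work of the proof is essentially to assemble the chain of isomorphisms above. Nevertheless, I would spell out briefly why the simplicial objects $S_\bullet(\CC,\Triv(\1))$ and $\NN_\bullet(\CC)$ coincide, to keep the argument self-contained. On components this is immediate from $\Triv(\1)(c_0) = \1$ (by \eqref{eq:trivial}) together with the unit isomorphism $\1 \otimes \CC(c_0,c_1) \cong \CC(c_0,c_1)$, giving $S_n(\CC,\Triv(\1)) \cong \NN_n(\CC)$. For the face maps $d_i$ with $0 < i < n$ both sides are induced by the composition \eqref{eq:d_i} in $\CC$; for $d_n$ both are induced by the augmentation $\varepsilon$ on the last hom-object.

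The only face map that requires a moment's verification is $d_0$. In $S_\bullet(\CC,\Triv(\1))$ it is induced by the morphism $\Triv(\1)(c_0) \otimes \CC(c_0,c_1) \to \Triv(\1)(c_1)$ adjoint to $\Triv(\1)_{c_0,c_1} : \CC(c_0,c_1) \to \AA(\1,\1)$. Since $\Triv(\1)_{c_0,c_1}$ is by construction the composite $\CC(c_0,c_1) \xrightarrow{\varepsilon} \1 \to \AA(\1,\1)$, passing to its adjoint shows that $d_0$ is, modulo the unit isomorphism, simply $\varepsilon$ applied to the first hom-factor, which agrees with $d_0$ in the nerve.

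The main (and very minor) obstacle is thus this simplicial bookkeeping; once it is established, the corollary follows formally from the theorem and the definitions. I do not expect any genuine difficulty beyond what Theorem \ref{th:homology_of_augmented} already handles.
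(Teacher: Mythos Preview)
Your proof is correct and follows exactly the paper's approach: the paper states the corollary without proof, since it is immediate from Theorem~\ref{th:homology_of_augmented} applied to $M=\Triv(\1)$ together with the identification \eqref{eq:NN_S}. Your additional verification of the simplicial structure (especially $d_0$) is more than the paper spells out, but it is accurate and harmless.
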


Consider a cosimplicial object $S^\bullet(\CC,M)$ in $\AA,$ whose components are
\begin{equation}
S^n(\CC,M) = \prod_{c_0,\dots,c_n} 
\AA(\CC(c_0,c_1)\otimes \dots \otimes \CC(c_{n-1},c_n),Mc_n).
\end{equation}
The coface maps $d^i:S^n(\CC,M) \to S^{n+1}(\CC,M)$ for $0\leq i\leq n+1$ are described as follows. For $0<i<n+1$ they are dual to the compositions \eqref{eq:d_i}. The morphism $d^0$ is dual to the map induced by the augmentation $\varepsilon : \CC(c_0,c_1)\to \1.$ The coface map $d^{n+1}$ is induced by the unit \eqref{eq:unit_forg-CoFr} $Mc_n \to \prod_{c_{n+1}} \AA(\CC(c_n,c_{n+1}),M_{c_{n+1}}).$
The codegenerasy maps $s^i$ are induced by the counit \eqref{eq:counit_forg-CoFr}.

\begin{theorem}\label{th:cohomology_of_augmented}
Let $\CC$ be an augmented $\AA$-projective $\AA$-category. Assume that  $\1_\AA$ is $\AA$-projective and $\AA$ has enough of injectives. Then 
\begin{equation}
H^*(\CC,M) \cong H^*(S^\bullet(\CC,M)).
\end{equation}
\end{theorem}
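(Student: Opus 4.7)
The plan is to mirror the proof of Theorem \ref{th:homology_of_augmented} by dualizing the bar/coinvariants argument into a cobar/invariants argument. The additional hypothesis that $\1_\AA$ is $\AA$-projective is exactly what is needed to ensure that the one-object $\AA$-category $\pt$ (whose only hom-object is $\1$) is $\AA$-projective, so that the dual of Proposition \ref{prop:bar_resolution} (the ``Similar'' proposition stated immediately after it) can be applied to the augmentation functor $\varepsilon:\CC\to\pt$.

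First I would apply that proposition with $f=\varepsilon$: both $\CC$ and $\pt$ are $\AA$-projective and $\AA$ has enough injectives, so $C({\rm CoBar}^\bullet(M))$ is a $\varepsilon_*$-acyclic resolution of $M$, and therefore
\begin{equation*}
H^n(\CC,M) \;=\; R^n\Inv(M) \;=\; R^n\varepsilon_*(M) \;\cong\; H^n\bigl(\Inv({\rm CoBar}^\bullet(M))\bigr).
\end{equation*}
Next I would identify the cosimplicial object $\Inv({\rm CoBar}^\bullet(M))$ with $S^\bullet(\CC,M)$. Using the closed form ${\rm CoBar}^n(M)(-)\cong \prod_{c_0,\dots,c_n}\AA(\CC(-,c_0)\otimes\CC(c_0,c_1)\otimes\cdots\otimes \CC(c_{n-1},c_n),Mc_n)$ established in the subsection on the cobar resolution, one sees that ${\rm CoBar}^n(M)\cong\CoFr(a_*)$ with $a_{c_0}=\prod_{c_1,\dots,c_n}\AA(\CC(c_0,c_1)\otimes\cdots\otimes\CC(c_{n-1},c_n),Mc_n)$, so the second isomorphism of \eqref{eq:coinv_free} yields
\begin{equation*}
\Inv({\rm CoBar}^n(M)) \;\cong\; \prod_{c_0} a_{c_0} \;=\; S^n(\CC,M).
\end{equation*}

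The remaining and most bookkeeping-heavy step is checking compatibility with cofaces and codegeneracies. The inner cofaces $d^i$ for $0<i<n+1$ correspond directly, since on both sides they are induced by the composition law of $\CC$ on adjacent slots (cf.\ \eqref{eq:comp}, \eqref{eq:comp2}, \eqref{eq:d_i}), and $d^{n+1}$ is in both cases induced by the unit \eqref{eq:unit_forg-CoFr} on $Mc_n$. The delicate coface is $d^0$: on the cobar side it arises from composition in the $\CC(-,c_0)\otimes\CC(c_0,c_1)$ slot, and the point is that once one applies $\varepsilon_*$ the variable $-$ is contracted via the augmentation, so this composition collapses to the map induced by $\varepsilon:\CC(c_0,c_1)\to\1$ that appears in the definition of $d^0$ for $S^\bullet(\CC,M)$. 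The codegeneracies on the cobar side come from the counit \eqref{eq:counit_forg-CoFr}, which after taking invariants reduces to the identity $\1\to\CC(c_i,c_i)$ appearing in $S^\bullet(\CC,M)$. This identification of $d^0$ is the only genuinely nontrivial diagram chase; everything else is the formal dual of \eqref{eq:coinv_bar} in the proof of Theorem \ref{th:homology_of_augmented}. Combining these with the displayed isomorphism then gives $H^n(\CC,M)\cong H^n(S^\bullet(\CC,M))$.
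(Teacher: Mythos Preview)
Your proposal is correct and is exactly the dualization the paper intends: the paper's own proof is the single line ``Similar to Theorem \ref{th:homology_of_augmented},'' and you have spelled out precisely that argument (apply the cobar-resolution proposition to $f=\varepsilon$, then identify $\Inv({\rm CoBar}^\bullet(M))\cong S^\bullet(\CC,M)$ via \eqref{eq:coinv_free}).
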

\begin{proof}
Similar to Theorem \ref{th:homology_of_augmented}. 
\end{proof}

Note that 
\begin{equation}
S^\bullet(\CC,\Triv(\1)) \cong \AA(\NN_\bullet(\CC),\1).
\end{equation}
Therefore, we obtain a corollary. 
\begin{corollary}\label{cor:cohomology}
Under the assumption of Theorem \ref{th:cohomology_of_augmented} we have an isomorphism
\begin{equation}
H^*(\CC) \cong H^*(\AA(\NN_\bullet(\CC),\1)).  
\end{equation}
\end{corollary}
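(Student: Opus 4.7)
The plan is to derive the corollary directly from Theorem \ref{th:cohomology_of_augmented} by specializing to $M = \Triv(\1)$ and identifying the resulting cosimplicial object $S^\bullet(\CC,\Triv(\1))$ with $\AA(\NN_\bullet(\CC),\1)$. First I would invoke Theorem \ref{th:cohomology_of_augmented} to obtain
\[ H^*(\CC) = H^*(\CC,\Triv(\1)) \cong H^*(S^\bullet(\CC,\Triv(\1))). \]
It then suffices to exhibit a natural isomorphism of cosimplicial objects
\[ S^\bullet(\CC,\Triv(\1)) \cong \AA(\NN_\bullet(\CC),\1). \]

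For the level-wise identification, substitute $Mc_n = \Triv(\1)(c_n) = \1$ in the defining formula for $S^n(\CC,M)$; this yields $S^n(\CC,\Triv(\1)) = \prod_{c_0,\dots,c_n} \AA(\CC(c_0,c_1)\otimes\cdots\otimes\CC(c_{n-1},c_n),\1).$ By Proposition \ref{prop:additive-functors}(2), the functor $\AA(-,\1)$ sends colimits to limits, so
\[ \AA\Bigl(\coprod_{c_0,\dots,c_n}\CC(c_0,c_1)\otimes\cdots\otimes\CC(c_{n-1},c_n),\1\Bigr) \cong \prod_{c_0,\dots,c_n}\AA(\CC(c_0,c_1)\otimes\cdots\otimes\CC(c_{n-1},c_n),\1), \]
which matches $S^n(\CC,\Triv(\1))$ with $\AA(\NN_n(\CC),\1)$ term-by-term.

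The remaining task, which is the main (though routine) piece of bookkeeping, is to verify that these levelwise isomorphisms are compatible with coface and codegeneracy maps. I would check this face-by-face: the coface $d^i$ of $S^\bullet$ for $0<i<n+1$ is by definition dual to the composition in $\CC$ used to build the face $d_i$ of $\NN_\bullet$ (compare equations \eqref{eq:d_i} and the corresponding description in Subsection \ref{subsection:nerve}), so the two agree under $\AA(-,\1)$. The map $d^0$ of $S^\bullet$ is dual to the augmentation $\varepsilon:\CC(c_0,c_1)\to \1$, which matches the face $d_0$ of $\NN_\bullet$ (noting that under $M=\Triv(\1)$ the map $Mc_0\otimes\CC(c_0,c_1)\to Mc_1$ adjoint to $M_{c_0,c_1}$ reduces to the augmentation itself, since $\Triv(\1)_{c,c'}$ is the composite $\CC(c,c')\to \1 \to \AA(\1,\1)$ by \eqref{eq:trivial}). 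The map $d^{n+1}$ uses the unit \eqref{eq:unit_forg-CoFr}, which in the trivial case $\1 \to \prod_{c_{n+1}}\AA(\CC(c_n,c_{n+1}),\1)$ is precisely dual to the augmentation $\CC(c_{n-1},c_n)\to \1$ defining $d_n$ of $\NN_\bullet$. The codegeneracies, induced by $\1 \to \CC(c_i,c_i)$, correspond dually to the degeneracies of $\NN_\bullet$ in the same way.

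The only substantive point to keep in mind is that $\AA(-,\1)$ turns the coproduct in $\NN_n(\CC)$ into the product appearing in $S^n(\CC,\Triv(\1))$; this is guaranteed by Proposition \ref{prop:additive-functors}(2) and does not require any additional hypothesis. Once the cosimplicial isomorphism is established, passing to cohomology gives $H^*(S^\bullet(\CC,\Triv(\1))) \cong H^*(\AA(\NN_\bullet(\CC),\1))$, and combining with the first display yields the claimed isomorphism.
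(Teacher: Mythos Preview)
Your proposal is correct and follows exactly the same approach as the paper: the paper simply records the isomorphism $S^\bullet(\CC,\Triv(\1)) \cong \AA(\NN_\bullet(\CC),\1)$ without further comment and deduces the corollary from Theorem \ref{th:cohomology_of_augmented}, while you spell out the levelwise identification and the routine compatibility checks. The only minor slip is that in matching $d^{n+1}$ you should compare with the face $d_{n+1}:\NN_{n+1}\to\NN_n$ (which uses $\varepsilon:\CC(c_n,c_{n+1})\to\1$) rather than $d_n$ on $\NN_n$, but this is an indexing typo and does not affect the argument.
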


\section{Magnitude homology}

\subsection{Magnitude homology of an enriched category} 
Let us recall the definition of the magnitude homology from \cite{leinster2021magnitude} (see also \cite{hepworth2022magnitude}). We will give a slightly more general definition, which is more natural for our setting. We don't assume that the base monoidal category $\VV$ is semicartesian. Instead of this we assume that the $\VV$-enriched category is augmented (Subsection \ref{subsection:nerve}). This is a generalisation because any category enriched over a semicartesian category has a unique augmentation.

Let $\VV$ be a symmetric monoidal category, and $\AA$ be an abelian cocomplete closed symmetric monoidal category. Assume that $\Sigma:\VV\to \AA$ is a strong monoidal functor. For an augmented $\VV$-category $\CC$ we consider a simplicial object $\widetilde\MC^\Sigma_\bullet(\CC)$ in $\AA,$ whose components are
\begin{equation}
\widetilde\MC^\Sigma_n(\CC) = \coprod_{c_0,\dots,c_n} \Sigma\CC(c_0,c_1)\otimes \dots \otimes \Sigma\CC(c_{n-1},c_n).
\end{equation}
The face maps $d_i:\widetilde\MC^\Sigma_n(\CC)\to \widetilde\MC^\Sigma_{n-1}(\CC)$ are defined for $0\leq i\leq n$ as follows. For $0<i<n$ they are induced by compositions
\begin{equation}
\Sigma\CC(c_{i-1},c_i)\otimes \Sigma\CC(c_i,c_{i+1}) \overset{\cong}\longrightarrow \Sigma(\CC(c_i,c_{i+1}) \otimes \CC(c_{i-1},c_i)) \overset{\Sigma (\circ)}\longrightarrow \Sigma\CC(c_{i-1},c_{i+1}).
\end{equation}
For $i=0,n$ they are induced by the augmentation 
\begin{equation}
 \Sigma\CC(c_0,c_1) \to \Sigma(\1)\cong \1, \hspace{1cm} \Sigma\CC(c_{n-1},c_n) \to \Sigma(\1)\cong \1.
\end{equation}
We denote by $\MC_\bullet^\Sigma(\CC)$ the normalised complex associated with the simplicial object $\widetilde\MC^\Sigma_\bullet(\CC).$ Then the magnitude homology is the homology of this complex
\begin{equation}
\MH^\Sigma_*(\CC)\cong H_*(\MC_\bullet^\Sigma(\CC)).
\end{equation}

If $\VV$ is a B\'enabou cosmos, and $\AA$ is an abelian B\'enabou cosmos, we can consider the $\AA$-category $\Sigma\CC$ defined in Subsection \ref{subsec:cosmos_change}. Using that $\Sigma$ is strong monoidal, we obtain that the morphism $\1_\AA\to \Sigma \1_\VV$ is an isomorphism. Therefore $\pt_\AA\cong  \Sigma \pt_\VV,$ and we obtain an augmentation $\Sigma\CC \longrightarrow \Sigma\pt_V \cong \pt_\AA.$

\begin{theorem}\label{th:magnitude_general}
Let $\VV$ be a B\'enabou cosmos, $\AA$ be an abelian B\'enabou cosmos, $\Sigma:\VV\to \AA$ be a strong monoidal functor and $\CC$ be an augmented $\VV$-category. Assume that $\AA$ has enough of projectives and $\Sigma\CC(c,c')$ is $\AA$-flat for any objects $c,c'$ of $\CC$. Then the category of $\Sigma \CC$-modules has enough of projectives and the magnitude homology is isomorphic to the homology of the augmented $\AA$-category $\Sigma\CC$
\begin{equation}
 \MH^\Sigma_*(\CC) \cong H_*(\Sigma\CC).
\end{equation}
\end{theorem}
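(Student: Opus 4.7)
The plan is to identify the simplicial object $\widetilde\MC^\Sigma_\bullet(\CC)$ computing magnitude homology with the nerve $\NN_\bullet(\Sigma\CC)$ of the augmented $\AA$-category $\Sigma\CC$, and then invoke Corollary \ref{cor:homology}. Essentially, all the work has already been done: once the augmented $\AA$-category $\Sigma\CC$ is in hand, the definition of magnitude homology from Leinster--Shulman becomes literally the same as the intrinsic homology of Section 3.

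First I would verify the hypotheses needed to apply Corollary \ref{cor:homology} to $\Sigma\CC$. The category $\Sigma\CC$ is $\AA$-flat by assumption, and $\AA$ has enough projectives by assumption, so by Proposition \ref{prop:projectives} the category $\Mod(\Sigma\CC)$ has enough projectives (this gives the first conclusion of the theorem). The augmentation $\Sigma\CC \to \pt_\AA$ is the one constructed just before the theorem statement, using $\pt_\AA \cong \Sigma\pt_\VV$ which exists because $\Sigma$ is strong monoidal. Hence the hypotheses of Theorem \ref{th:homology_of_augmented}, and therefore of Corollary \ref{cor:homology}, are satisfied for $\Sigma\CC$.

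Next I would write down the isomorphism of simplicial objects $\widetilde\MC^\Sigma_\bullet(\CC) \cong \NN_\bullet(\Sigma\CC)$. Component-wise both sides are $\coprod_{c_0,\dots,c_n} \Sigma\CC(c_0,c_1)\otimes\cdots\otimes \Sigma\CC(c_{n-1},c_n)$. For $0<i<n$, the face map $d_i$ on the left uses the composition $\Sigma\CC(c_{i-1},c_i)\otimes \Sigma\CC(c_i,c_{i+1}) \to \Sigma(\CC(c_i,c_{i+1})\otimes \CC(c_{i-1},c_i)) \to \Sigma\CC(c_{i-1},c_{i+1})$, which is by construction the composition law of $\Sigma\CC$ (Subsection \ref{subsec:cosmos_change}); so it equals the face $d_i$ of the nerve $\NN_\bullet(\Sigma\CC)$. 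The faces $d_0$ and $d_n$ on both sides are induced by the augmentation $\Sigma\CC(-,-)\to \1_\AA$ obtained from $\varepsilon:\CC\to \pt_\VV$ and the iso $\Sigma\1_\VV \cong \1_\AA$, and the degeneracies on both sides are induced by the identities $\1_\AA \to \Sigma\CC(c,c)$. Lemma \ref{lemma:iso} (or an immediate direct check) then gives the simplicial isomorphism.

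Finally, since the magnitude homology $\MH^\Sigma_*(\CC)$ is by definition the homology of the normalised complex $\MC^\Sigma_\bullet(\CC)$ of $\widetilde\MC^\Sigma_\bullet(\CC)$, and since the normalised and unnormalised chain complexes of a simplicial object in an abelian category have the same homology, we get
\[
\MH^\Sigma_*(\CC) \cong H_*(\widetilde\MC^\Sigma_\bullet(\CC)) \cong H_*(\NN_\bullet(\Sigma\CC)) \cong H_*(\Sigma\CC),
\]
where the last isomorphism is Corollary \ref{cor:homology}. The only potential obstacle is the bookkeeping in the simplicial identification, in particular keeping the symmetry isomorphism and the strong monoidal constraint $\Sigma v\otimes \Sigma v' \xrightarrow{\cong} \Sigma(v\otimes v')$ straight with the convention chosen in Subsection \ref{subsec:cosmos_change} for composition in $\Sigma\CC$; but because $\Sigma$ is strong monoidal this is a formal verification rather than a substantive issue.
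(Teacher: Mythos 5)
Your proposal is correct and follows exactly the route taken in the paper: identify $\widetilde\MC^\Sigma_\bullet(\CC)$ with the nerve $\NN_\bullet(\Sigma\CC)$, note that the flatness hypothesis makes $\Sigma\CC$ an $\AA$-flat augmented $\AA$-category, and conclude via Proposition \ref{prop:projectives} and Corollary \ref{cor:homology}. The paper simply asserts the simplicial identification as ``easy to see,'' whereas you carry out the face- and degeneracy-map bookkeeping explicitly, which is a harmless elaboration rather than a different argument.
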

\begin{proof}
It is easy to see that $\widetilde\MC^\Sigma_\bullet(\CC)$ is isomorphic to the nerve $\NN_\bullet(\Sigma \CC)$ of the augmented category $\Sigma\CC$. Since $\Sigma$ sends objects of $\VV$ to $\AA$-flat objects, the category $\Sigma\CC$ is $\AA$-flat. Then the statement follows from Corollary  \ref{cor:homology} and Proposition \ref{prop:projectives}. 
\end{proof}

Let us recall the definition of the magnitude cohomology \cite{hepworth2022magnitude}. It is defined in the same setting as homology. We consider a cosimplicial object of $\AA$ defined by the formula
\begin{equation}
\widetilde{\MC}^\bullet_\Sigma(\CC) = \AA(\widetilde{\MC}_\bullet^\Sigma(\CC),\1),
\end{equation}
and the associated normalised cochain complex
$
{\MC}^\bullet_\Sigma(\CC) = \AA({\MC}_\bullet^\Sigma(\CC),\1).$ The magnitude cohomology is defined as the cohomology of this cochain complex
\begin{equation}
    \MH^*_\Sigma(\CC)  = H^*(\MC^\bullet_\Sigma(\CC) ) \cong H^*(\widetilde{\MC}^\bullet_\Sigma(\CC))
\end{equation}

\begin{theorem}\label{th:magnitude_general2}
 Let $\VV$ be a B\'enabou cosmos, $\AA$ be an abelian B\'enabou cosmos, $\Sigma:\VV\to \AA$ be a strong monoidal functor and $\CC$ be an augmented $\VV$-category. Assume that $\AA$ has enough of injectives, $\Sigma\CC(c,c')$ is $\AA$-projective for any objects $c,c'$ of $\CC$ and $\1_\AA$ is $\AA$-projective. Then the category of $\Sigma \CC$-modules has enough of injectives and the magnitude cohomology is isomorphic to the cohomology of the augmented $\AA$-category $\Sigma\CC$
\begin{equation}
 \MH_\Sigma^*(\CC) \cong H^*(\Sigma\CC).
\end{equation}
\end{theorem}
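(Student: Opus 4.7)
The plan is to follow the same template as Theorem \ref{th:magnitude_general}, but dualise to cohomology by applying the contravariant functor $\AA(-,\1)$ and appealing to Corollary \ref{cor:cohomology} in place of Corollary \ref{cor:homology}.

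First I would verify that the $\AA$-category $\Sigma\CC$ meets all the hypotheses of Theorem \ref{th:cohomology_of_augmented}. By assumption $\Sigma\CC(c,c')$ is $\AA$-projective for every pair of objects $c,c'$, so $\Sigma\CC$ is an $\AA$-projective $\AA$-category in the paper's sense; the remaining conditions that $\AA$ has enough of injectives and that $\1_\AA$ is $\AA$-projective are part of the statement. Moreover, Proposition \ref{prop:injectives} applied to $\Sigma\CC$ immediately yields that $\Mod(\Sigma\CC)$ has enough of injectives, which takes care of the first half of the claim.

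Next I would identify the cosimplicial object computing magnitude cohomology with the one appearing in Corollary \ref{cor:cohomology}. Exactly as in the proof of Theorem \ref{th:magnitude_general}, the strong monoidality of $\Sigma$ together with the compatibility of augmentations recorded in Subsection \ref{subsec:cosmos_change} yields an isomorphism of simplicial objects $\widetilde\MC^\Sigma_\bullet(\CC) \cong \NN_\bullet(\Sigma\CC)$. Applying $\AA(-,\1)$ produces a cosimplicial isomorphism
\[
\widetilde\MC^\bullet_\Sigma(\CC) \;\cong\; \AA(\NN_\bullet(\Sigma\CC),\1),
\]
and since the normalised and unnormalised cochain complexes of any cosimplicial object have the same cohomology, Corollary \ref{cor:cohomology} then delivers $\MH^*_\Sigma(\CC) \cong H^*(\Sigma\CC)$.

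The main obstacle is essentially nil, as Theorem \ref{th:cohomology_of_augmented} and Corollary \ref{cor:cohomology} already do the heavy lifting. The only point requiring care is to check that the coface and codegeneracy maps of $\AA(\NN_\bullet(\Sigma\CC),\1)$ agree with those of $\widetilde\MC^\bullet_\Sigma(\CC)$ under the above isomorphism; this is immediate from the explicit definition of the faces and degeneracies of the nerve in Subsection \ref{subsection:nerve}, together with the fact that $\Sigma$ being strong monoidal forces $\Sigma$ to commute with both composition and the augmentation up to coherent isomorphism.
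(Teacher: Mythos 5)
Your proposal is correct and follows essentially the same route as the paper: verify that $\Sigma\CC$ is $\AA$-projective from the hypotheses, identify $\widetilde\MC^\Sigma_\bullet(\CC)$ with $\NN_\bullet(\Sigma\CC)$, and conclude via Corollary \ref{cor:cohomology} and Proposition \ref{prop:injectives}. No gaps.
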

\begin{proof}
Note that $\widetilde\MC^\Sigma_\bullet(\CC)\cong \NN_\bullet(\Sigma \CC).$  Since $\Sigma$ sends objects of $\VV$ to $\AA$-projective objects, the category $\Sigma\CC$ is $\AA$-projective. Then the statement follows from Corollary  \ref{cor:cohomology} and Proposition \ref{prop:injectives}.
\end{proof}

\begin{remark}[Magnitude homology with coefficients]
Note that under assumption of Theorem \ref{th:magnitude_general} we obtain a natural definition of the magnitude homology with coefficients in a $\Sigma\CC$-module $M$ 
\begin{equation}
\MH^\Sigma_*(\CC,M) := H_*(\Sigma\CC,M).
\end{equation}
Similarly, under the assumption of Theorem \ref{th:magnitude_general2} we obtain a natural definition of the magnitude cohomology with coefficients in a $\Sigma\CC$-module $M$ 
\begin{equation}
\MH_\Sigma^*(\CC,M) := H^*(\Sigma\CC,M).
\end{equation}
\end{remark}

\subsection{Distance modules over a quasimetric space} \label{quasi}

A generalized metric space is a set $X$ equipped with a distance map $d:X\times X\to [0,\infty]$ such that 
\begin{equation}
d(x,y)+d(y,z)\geq d(x,z), \hspace{1cm} d(x,x)=0
\end{equation}
for any $x,y,z\in X.$ A \emph{quasimetric space} is a generalized metric space such that
\begin{equation}
 d(x,y)=0 \  \Leftrightarrow \   x=y.
\end{equation}
The following theory could be developed for any generalized metric space, but for the case of a quasimetric space there are several simplifications. For example, the definition of invariants and coinvariants of distance modules for the general case of generalized metric spaces is more complicated.  So, for simplicity, further we will assume that $X$ is a quasimetric space. Elements of $X$ are called points. We say that a point $y$ lies  between points $x$ and $z,$ denoted by 
\begin{equation}
x\preccurlyeq  y\preccurlyeq z,    
\end{equation}
if $d(x,y)+d(y,z)=d(x,z).$ 

In this section we use the abelian B\'enabou cosmos $\RR\text{-}\GrMod(\KK)$  of $\RR$-graded modules over a commutative ring $\KK$ (for details see Subsection \ref{subsection:cosmos_of_graded_modules} in Appendix). A \emph{distance module} $M$ over a quasimetric space $X$ consists of: 
\begin{enumerate}
    \item a family $(M(x))_{x\in X}$ of $\RR$-graded modules indexed by points of $X;$
    \item  morphisms $M(x,y):M(x)\to M(y)$ of degree $d(x,y)$ for each pair of points $x,y$ such that $d(x,y)<\infty;$
\end{enumerate}
satisfying  conditions: $M(x,x)=1_{M(x)}$ and
\begin{equation}\label{eq:magnitude_definition}
M(y,z) \circ  M(x,y) = 
\begin{cases}
M(x,z), &  x\preccurlyeq  y\preccurlyeq z  \\
0, & \text{ else.}
\end{cases}
\end{equation}

An element  $m\in M(x)_\ell$ will be called an element of $M$ over the point $x$ of degree $\ell$. For such an element and any point $y$ such that $d(x,y)<\infty$ we set 
\begin{equation}
m\cdot y := M(x,y)(m).  
\end{equation}
So any element $m$ of $M$ over $x$ defines an element $m\cdot y$ over $y$ of degree 
\begin{equation}
|m\cdot y| = |m|+d(x,y).    
\end{equation}
Then the equation \eqref{eq:magnitude_definition} can be rewritten as
\begin{equation}
(m\cdot y)\cdot z 
=
\begin{cases}
m \cdot z, & x \preccurlyeq y \preccurlyeq z \\
0,& \text{ else.}
\end{cases}
\end{equation}

A morphism of distance modules $\varphi :M\to N$ is a family  $(\varphi (x):M(x)\to N(x))_{x\in X}$ of morphisms of $\RR$-graded modules of degree $0$ such that for any element $m$ of $M$ over a point $x$ and any point $y$ such that $d(x,y)<\infty$ we have 
\begin{equation}
\varphi(x)(m) \cdot y = \varphi(y)(m\cdot y).
\end{equation}
The category of distance modules over $X$ is denoted by $\DMod(X,\KK).$

For an $\RR$-graded module $A$ we denote by $\Triv(A)$ the distance module such that 
\begin{equation}\label{eq:triv:mag}
\Triv(A)(x)=A
\end{equation}
and $\Triv(A)(x,y)=0$ for any $x\neq y.$ This construction defines a functor 
\begin{equation}
\Triv : \RR\text{-}\GrMod(\KK) \longrightarrow \DMod(X,\KK).
\end{equation}

For a distance module $M$ we define the $\RR$-graded module of invariants as follows
\begin{equation}\label{eq:inv:mag}
\Inv(M)_\ell = \prod_{x\in X} \Inv_x(M)_\ell, 
\end{equation}
where
\begin{equation}
\Inv_x(M)_\ell = \{m\in M(x)_\ell \mid m\cdot y=0 \text{ for any } y\neq x, d(y,x)<\infty \}.
\end{equation}

Dually we define coinvariants
\begin{equation}\label{eq:coinv:mag}
\Coinv(M)_\ell = \bigoplus_{x\in X} \Coinv_x(M)_\ell, 
\end{equation}
where 
\begin{equation}\label{eq:coinv_x}
\Coinv_x(M)_\ell = M(x)_\ell/ \langle  m\cdot x\mid  m\in M(y)_{\ell-d(y,x)}, y\ne x, d(y,x)<\infty \rangle.
\end{equation}

These two constructions define functors
\begin{equation}
\Inv,\Coinv : \DMod(X,\KK)\longrightarrow \RR\text{-}\GrMod(\KK).
\end{equation}

Further we assume that  $\GGG\subseteq \RR$ is a subgroup such that $d(x,y)\in \GGG\cup\{\infty\}$ for any points $x,y\in X$ (for example, if $X$ is a digraph, we can take $\GGG=\ZZ$). Then we consider a full subcategory
\begin{equation}
\DMod_\GGG(X,\KK)\subseteq \DMod(X,\KK)
\end{equation}
that consists of distance modules such that $M(x)_\ell=0$ for any $\ell\in \RR\setminus \GGG.$ Such distance modules will be called $\GGG$-graded. Then the functors of invariants and coinvariants of distance modules, and the functor of trivial distance module can be restricted to the  functors
\begin{equation}
\Inv,\Coinv : \DMod_\GGG(X,\KK)\longrightarrow \GGG\text{-}\GrMod(\KK),
\end{equation}
\begin{equation}
\Triv  : \GGG\text{-}\GrMod(\KK) \longrightarrow \DMod_\GGG(X,\KK).
\end{equation}

Denote by $\Sigma_\GGG X$ an augmented category enriched over $\GGG\text{-}\GrMod(\KK)$ whose objects are points of $X$ and hom-objects are defined as
\begin{equation}\label{eq:sigmax1}
(\Sigma_\GGG X)(x,y)=\begin{cases}
\KK[d(x,y)], & d(x,y)<\infty \\
0, & d(x,y)=\infty.
\end{cases} 
\end{equation}
If we denote by $(x,y)$ the generator of $(\Sigma_\GGG X)(x,y),$ then $(x,x)=1_x$ and the composition is defined so that 
\begin{equation}\label{eq:sigmax2}
(y,z) \circ (x,y) = 
\begin{cases}
(x,z), & x \preccurlyeq y \preccurlyeq z \\
0, & \text{else}.
\end{cases}
\end{equation}
The augmentation 
$\varepsilon_{x,y} : (\Sigma_\GGG X)(x,y) \to \KK[0]$
is defined by the formula
\begin{equation}\label{eq:sigmax3}
\varepsilon_{x,y}((x,y)) =\begin{cases}
1, & x=y,\\
0,& x\ne y.
\end{cases}
\end{equation}

\begin{proposition}\label{prop:magmod-sigma}
The category of $\Sigma_\GGG X$-modules is isomorphic to the category of $\GGG$-graded distance modules 
\begin{equation}
\DMod_\GGG(X,\KK) \cong \Mod(\Sigma_\GGG X)
\end{equation}
and the functors of trivial distance module, invariants and coinvariants for distance modules defined by \eqref{eq:triv:mag}, \eqref{eq:inv:mag}  \eqref{eq:coinv:mag} correspond to the functors trivial module, invariants and coinvariants of modules defined by \eqref{eq:triv_inv_coinv}. In particular $\DMod_\GGG(X,\KK)$ has enough of projectives and injectives and there are adjunctions 
\begin{equation}
\Coinv \dashv \Triv \dashv \Inv.
\end{equation}
\end{proposition}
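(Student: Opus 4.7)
The plan is to unpack the definition of an $\AA$-enriched functor from $\Sigma_\GGG X$ to $\AA=\GGG\text{-}\GrMod(\KK)$ and observe that the data is literally that of a $\GGG$-graded distance module. Once the isomorphism of categories is in hand, the description of the functors $\Triv$, $\Coinv$, $\Inv$ and the assertions about adjunctions and enough projectives/injectives follow from the general framework.

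First I would unpack objects. For each pair $x,y$ with $d(x,y)<\infty$, the hom-object $(\Sigma_\GGG X)(x,y)=\KK[d(x,y)]$ is a free graded module of rank one on the generator $(x,y)$ in degree $d(x,y)$; for $d(x,y)=\infty$ it is zero. An enriched functor $M:\Sigma_\GGG X\to\AA$ therefore amounts to a family of graded modules $M(x)$ together with, for each pair with $d(x,y)<\infty$, the image $M(x,y):M(x)\to M(y)$ of the generator $(x,y)$, which is a morphism of graded modules of degree $d(x,y)$. The unit axiom forces $M(x,x)=\mathrm{id}_{M(x)}$, and applying the composition axiom to the generators $(x,y)$ and $(y,z)$ while using the piecewise formula \eqref{eq:sigmax2} yields precisely the condition \eqref{eq:magnitude_definition}. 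Hence objects on both sides match. For morphisms, an $\AA$-natural transformation $\varphi:M\to N$ is a family of degree-zero maps $\varphi_x:M(x)\to N(x)$, and testing \eqref{eq:V-nat} on the generator $(x,y)$ gives $\varphi_y\circ M(x,y)=N(x,y)\circ\varphi_x$, i.e. $\varphi_y(m\cdot y)=\varphi_x(m)\cdot y$, which is exactly the defining condition of a morphism of distance modules. This upgrades the bijection to an isomorphism of categories.

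Next I would identify the three functors by expanding the definitions \eqref{eq:triv_inv_coinv}. For $\Triv(A)=\varepsilon^*(A)$, the action of $(x,y)$ on $A$ is $\varepsilon_{x,y}((x,y))\cdot 1_A$, which by \eqref{eq:sigmax3} is $1_A$ when $x=y$ and $0$ otherwise, matching \eqref{eq:triv:mag}. For $\Coinv(M)=\varepsilon_!(M)$, I would use the coequaliser description \eqref{eq:coinv}: on the summand $M(x)\otimes(\Sigma_\GGG X)(x,y)$ the two parallel arrows send $m\otimes(x,y)$ to $m\cdot y$ (action, landing in the $y$-summand) and to $\varepsilon((x,y))\cdot m$ (augmentation, landing in the $x$-summand and nonzero only when $x=y$). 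The resulting quotient decomposes as $\bigoplus_x M(x)/\langle m\cdot x\mid m\in M(y),\ y\ne x,\ d(y,x)<\infty\rangle$, i.e. \eqref{eq:coinv:mag}–\eqref{eq:coinv_x}. The identification of $\Inv=\varepsilon_*$ with \eqref{eq:inv:mag} is dual, using the equaliser description of the right Kan extension along $\varepsilon$.

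The closing assertions are then formal: the adjunctions $\Coinv\dashv\Triv\dashv\Inv$ are the instances \eqref{eq:f-adj} for the augmentation $\varepsilon$, and enough projectives and injectives in $\Mod(\Sigma_\GGG X)$ follow from Propositions \ref{prop:projectives} and \ref{prop:injectives}, once we note that $\GGG\text{-}\GrMod(\KK)$ itself has enough projectives (free graded modules) and injectives. The only genuinely non-routine step is the first one, namely verifying that the piecewise composition \eqref{eq:sigmax2} in $\Sigma_\GGG X$ reproduces the piecewise distance-module law \eqref{eq:magnitude_definition}; this is a direct comparison of the two formulas, and the rest of the argument is bookkeeping.
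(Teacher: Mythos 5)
Your proposal is correct and follows essentially the same route as the paper: the object/morphism matching is the "obvious" isomorphism of categories, the trivial module is identified via \eqref{eq:trivial}, and the coinvariants are computed from the coequaliser \eqref{eq:coinv} exactly as in the paper (which decomposes the two parallel maps summand-by-summand to recover \eqref{eq:coinv_x}), with invariants dual and the final assertions formal consequences of the general framework. No gaps.
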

\begin{proof}
The isomorphism of categories is obvious. The fact that the functors of trivial modules coincide follows easily from the formula \eqref{eq:trivial}.

Let us prove that coinvariants in the sense of  distance modules defined by the formula \eqref{eq:coinv:mag} coincide with coinvariants in the sense of modules over enriched categories defined by the formula 
\eqref{eq:coinv}.  The formula  
\eqref{eq:coinv} implies that that we need to consider the cokernel of the difference of two morphisms  
\begin{equation}\tau-\tau': 
\bigoplus_{\substack{y,x\\ d(y,x)<\infty}} M(y)[d(y,x)]  \longrightarrow \bigoplus_x M(x) ,
\end{equation}
defined by the formulas $\tau\: {\rm em}_{y,x} ={\rm em}_y \:  \tilde \varepsilon_{y,x}$ and $\tau'\: {\rm em}_{y,x} = {\rm em}_x\: M(y,x),$ where
\begin{equation}
\tilde \varepsilon_{y,x} 
=
\begin{cases}
1_{M(y)}, & y=x\\
0, & y\ne x.
\end{cases}
\end{equation}
Note that both $\tau$ and $\tau'$ send the summand with the index $(y,x)$ to the summand with the index $x.$ Therefore we can present them as  $\tau= \bigoplus_x \tau_x $ and $\tau'= \bigoplus_x \tau'_x$ for  morphisms 
\begin{equation}
\tau_x,\tau'_x: \bigoplus_{y: d(y,x)<\infty} M(y)[d(y,x)] \to M(x).    
\end{equation}
Denote by $\tau_{x,y},\tau'_{x,y}:M(y)[d(y,x)]\to M(x)$ their restrictions to the corresponding summand. Then $\tau_{x,x}=1_{M(x)}=\tau'_{x,x},$  and if $x\neq y$ we have $\tau_{x,y}=0$ and $\tau'_{x,y}=M(y,x).$ Therefore
\begin{equation}
\Im(\tau_{x,y}-\tau'_{x,y}) = \begin{cases}
 0, & y=x\\
 \Im(M(y,x)), & y\neq x, d(y,x)<\infty.
\end{cases}
\end{equation}
It follows that 
\begin{equation}
\Coker(\tau_x - \tau'_x) = M(x) / \sum_{y\ne x,d(y,x)<\infty} \Im(M(y,x)).
\end{equation}
Therefore
\begin{equation}
\Coker(\tau_x - \tau'_x)_\ell = M(x)_\ell/\langle m\cdot x\mid  m\in M(y)_{\ell-d(y,x)}, y\ne x, d(y,x)<\infty\rangle.
\end{equation}
Hence these two definitions of coinvariants coincide. 

The proof for the case of invariants is similar.
\end{proof}

Further we consider the trivial magnitude $\tilde \KK=\Triv(\KK[0]).$

\begin{proposition}\label{prop:invariants_as_hom} For a distance module $M$ there is an isomorphism
\begin{equation}
\Hom(\tilde \KK[\ell],M) \cong \Inv(M)_\ell
\end{equation}
that sends $\varphi:\tilde\KK[\ell]\to M$ to $(\varphi(x)(1))_{x\in X}.$
\end{proposition}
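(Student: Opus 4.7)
The plan is to unpack both sides and exhibit a bijection directly. First I will spell out what $\tilde\KK[\ell]$ is as a distance module: by \eqref{eq:triv:mag} we have $\tilde\KK[\ell](x)=\KK[\ell]$ for every $x\in X$, with structure maps $\tilde\KK[\ell](x,y)=0$ whenever $x\neq y$ (and $\tilde\KK[\ell](x,x)=1$). Consequently, for the generator $1\in\tilde\KK[\ell](x)_\ell$ one has $1\cdot y=0$ for every $y\neq x$ with $d(x,y)<\infty$.

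Next I would analyze a morphism $\varphi:\tilde\KK[\ell]\to M$. By definition it is a family of degree-$0$ graded $\KK$-linear maps $\varphi(x):\KK[\ell]\to M(x)$, so each $\varphi(x)$ is determined by the element $m_x:=\varphi(x)(1)\in M(x)_\ell$. This gives a candidate map $\varphi\mapsto (m_x)_{x\in X}\in\prod_{x\in X}M(x)_\ell$, which matches the formula stated in the proposition.

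The key step is to show that the compatibility axiom for distance-module morphisms is equivalent to $(m_x)\in\Inv(M)_\ell$. Evaluating the axiom $\varphi(x)(a)\cdot y=\varphi(y)(a\cdot y)$ at $a=1$ for $y\neq x$ with $d(x,y)<\infty$: the right-hand side becomes $\varphi(y)(0)=0$ by the previous paragraph, and the left-hand side is $m_x\cdot y$. So the axiom is precisely $m_x\cdot y=0$ for every $y\neq x$ with $d(x,y)<\infty$, which by definition says $m_x\in\Inv_x(M)_\ell$. The case $y=x$ is automatic since $M(x,x)=1$. Thus the assignment $\varphi\mapsto(\varphi(x)(1))_{x\in X}$ lands in $\Inv(M)_\ell=\prod_x\Inv_x(M)_\ell$.

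Finally I would construct the inverse: given $(m_x)\in\Inv(M)_\ell$, define $\varphi(x):\KK[\ell]\to M(x)$ by $\varphi(x)(k)=km_x$, which is graded $\KK$-linear of degree $0$. The compatibility axiom holds trivially when $y=x$ and holds when $y\neq x$, $d(x,y)<\infty$ because both sides vanish: the right side is $\varphi(y)(0)=0$, and the left side is $k(m_x\cdot y)=0$ since $m_x\in\Inv_x(M)_\ell$. The two constructions are evidently mutually inverse and $\KK$-linear, giving the desired isomorphism. I expect no real obstacle here beyond careful tracking of what "trivial module" and "$\ell$-shift" mean at the level of elements; once the generator $1$ of $\tilde\KK[\ell](x)_\ell$ is singled out, everything is a direct unpacking of definitions.
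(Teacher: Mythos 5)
Your proof is correct and follows the same route as the paper's: observe that in the trivial module $1\cdot y=0$ for $y\neq x$, so the morphism axiom forces $\varphi(x)(1)\cdot y=0$ and the tuple lands in $\Inv(M)_\ell$, then invert by $\varphi(x)(a)=a\cdot m_x$. You merely spell out the mutual-inverse verification in more detail than the paper does.
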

\begin{proof}
Note that by the definition of the trivial distance module for any element $a$ of $\tilde \KK[\ell]$ over a point $x$ and $y\neq x$ we have $a\cdot y=0.$ Therefore for any  
$\varphi :\tilde\KK[\ell]\to M$ we have $\varphi(x)(a)\cdot y =0.$ 
Therefore $(\varphi(x)(1))_{x\in X}\in \Inv(M)_\ell.$ On the other hand, it is easy to see that for any $m\in \Inv(M)$ we can define a morphism $\varphi:\KK[\ell]\to M$ by the formula $\varphi(x)(a)=a\cdot m_x.$ The assertion follows. 
\end{proof}

\begin{remark}[Right vs. left distance modules]
The distance modules that we defined may be called right distance modules. Similarly one can define left distance modules so that we have morphisms $M(x,y):M(y)\to M(x)$ of degree $d(x,y),$ and the category of left distance modules over $X$ is equivalent to the category of $(\Sigma_\GGG X)^{op}$-modules. Note that $(\Sigma_\GGG X)^{op} = \Sigma_\GGG (X^{op}),$ where $X^{op}$ is a quasimetric space with the same set of points and a quasimetric defined by  $d^{X^{op}}(x,y)=d^X(y,x).$ Therefore, the category of left distance modules over $X$ is equivalent to the category of right distance modules over $X^{op}.$  
\end{remark}

\subsection{Magnitude homology of a quasimetric space} Let $X$ be a quasimetric space. For a tuple of points $(x_0,\dots,x_n)$ in $X$ we  set 
\begin{equation}
    |x_0,\dots,x_n| = \sum_{i=0}^{n-1} d(x_i,x_{i+1}).
\end{equation}

Let us recall the definition of the magnitude homology of a quasimetric space. For a real number $\ell\geq 0$ we consider a simplicial $\KK$-module $\widetilde{\MC}_{\bullet,\ell}(X)$ which is freely generated by tuples of points $(x_0,\dots,x_n)$ such that $|x_0,\dots,x_n|=\ell$
\begin{equation}
\widetilde{\MC}_{n,\ell}(X) = \KK\cdot \{(x_0,\dots,x_n)\mid |x_0,\dots,x_n|=\ell \}.   
\end{equation}
Tuples $(x_0,\dots,x_n)$ with $|x_0,\dots,x_n|<\ell$ are identified with zero in $\widetilde{\MC}_{n,\ell}(X).$ Then the $i$-th face map is the map of deletion of the $i$-th point, and the $i$-th degeneracy map is the map of doubling of $i$-th point. The assotiated normalized complex is denoted by $\MC_{\bullet,\ell}(X)$
\begin{equation}
\MC_{\bullet,\ell}(X) = \KK\cdot \{(x_0,\dots,x_n)\mid |x_0,\dots,x_n|=\ell, x_i\ne x_{i+1}\}.
\end{equation}
Then the magnitude homology of $X$ is defined as the homology of this chain complex
\begin{equation}
\MH_{n,\ell}(X) = H_n(\widetilde{\MC}_{\bullet,\ell}(X)) = H_n(\MC_{\bullet,\ell}(X)).
\end{equation}

Now we give a definition of magnitude homology with coefficients in a distance module $M.$ We consider a simplicial module 
$\widetilde{\MC}_{\bullet,\ell}(X,M)$  whose components are defined by the formula 
\begin{equation}
\widetilde{\MC}_{n,\ell}(X,M) = \bigoplus_{
\substack{|x_0,\dots,x_n|<\infty}
} M(x_0)_{\ell-|x_0,\dots,x_n|}.
\end{equation}
Here the summation is taken over all tuples $(x_0,\dots,x_n)$ of points of $X$ such that $|x_0,\dots,x_n|<\infty.$ For an element $m\in M(x_0)_{\ell-|x_0,\dots,x_n|}$ we denote by 
\begin{equation}
m \otimes (x_0,\dots,x_n)
\end{equation}
the corresponding element of $\widetilde{\MC}_{n,\ell}(X,M)$ from the summand indexed by $(x_0,\dots,x_n).$ 
The face maps $d_i$ for $0\leq i\leq n$ are defined as follows. For $1\leq i\leq n-1$ we have 
\begin{equation}\label{eq:partial_i}
d_{i}(m \otimes (x_0,\dots,x_n)) = 
\begin{cases}
m\otimes (x_0,\dots,\hat  x_i,\dots,x_n), &  x_{i-1} \preccurlyeq  x_i \preccurlyeq  x_{i+1} \\
0, & \text{ else,}
\end{cases}
\end{equation}
and for $i=0,n$ we have 
\begin{equation}\label{eq:partial_0}
d_0(m \otimes (x_0,\dots,x_n)) = 
m\cdot x_1\otimes (x_1,\dots,x_n).
\end{equation}
\begin{equation}
d_n(m \otimes (x_0,\dots,x_n)) = \begin{cases}
m \otimes (x_0,\dots,x_{n-1}), & x_n=x_{n+1} \\
0, & \text{else.}
\end{cases}
\end{equation}
The degeneracy maps $s_i$ are defined by
\begin{equation}
s_i(m\otimes (x_0,\dots,x_n)) = m\otimes (x_0,\dots,x_i,x_i,\dots,x_n). 
\end{equation}
The corresponding normalised complex  will be denoted by $\MC_{\bullet,\ell}(X)$
\begin{equation}
\MC_{n,\ell}(X,M) = \bigoplus_{
\substack{|x_0,\dots,x_n|<\infty \\ x_i\ne x_{i+1}}
} M(x_0)_{\ell-|x_0,\dots,x_n|}.
\end{equation}
The magnitude homology of $X$ with coefficients in $M$ is  defined by 
\begin{equation}
\MH_{n,\ell}(X,M) = H_n(\MC_{\bullet,\ell}(X,M)).
\end{equation}
It is easy to see that 
\begin{equation}
\MH_{n,\ell}(X)= \MH_{n,\ell}(X,\tilde \KK),
\end{equation}
where $\tilde \KK=\Triv(\KK[0]).$

\begin{theorem}\label{th:quasimetric} Let $\GGG\subseteq \RR$ be a subgroup such that 
$d(x,y)\in \GGG\cup\{\infty\}$ for any points $x,y$ of $X$ Then the category of $\GGG$-graded distance modules $\DMod_\GGG(X,\KK)$ has enough of projectives, the functor of coinvariants 
\begin{equation}
\Coinv : \DMod_\GGG(X,\KK)\longrightarrow \GGG\text{-}\GrMod(\KK)
\end{equation}
is right exact, and  the magnitude homology is isomorphic to its left derived functors  
\begin{equation}
\MH_{n,\ell}(X,M)\cong (L_n\Coinv)(M)_\ell.
\end{equation}
\end{theorem}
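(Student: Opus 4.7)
The plan is to reduce the statement to Theorem \ref{th:homology_of_augmented} via the translation from distance modules to modules over the enriched category $\Sigma_\GGG X$ provided by Proposition \ref{prop:magmod-sigma}. The abelian B\'enabou cosmos we work over is $\AA = \GGG\text{-}\GrMod(\KK)$, which has enough projectives (free graded $\KK$-modules are projective). First I would observe that $\Sigma_\GGG X$ is $\AA$-flat: each hom-object $(\Sigma_\GGG X)(x,y)$ is either zero or a shifted copy $\KK[d(x,y)]$ of the free $\KK$-module of rank one, and such objects are flat in $\GGG\text{-}\GrMod(\KK)$.

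Via the isomorphism of categories $\DMod_\GGG(X,\KK) \cong \Mod(\Sigma_\GGG X)$, Proposition \ref{prop:projectives} immediately yields that $\DMod_\GGG(X,\KK)$ has enough projectives. Right exactness of $\Coinv$ follows from Proposition \ref{prop:magmod-sigma}, which identifies it with $\varepsilon_!$, a left adjoint (Proposition \ref{prop:coinduced_abelian}(1)). It remains to identify the derived functors with magnitude homology. By Theorem \ref{th:homology_of_augmented} applied to the augmented $\AA$-flat category $\Sigma_\GGG X$, we have
\begin{equation}
(L_n\Coinv)(M) \cong H_n(S_\bullet(\Sigma_\GGG X, M)).
\end{equation}

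The main step, and the only one requiring genuine bookkeeping, is to match $S_\bullet(\Sigma_\GGG X, M)$ with $\MC_{\bullet,\ell}(X,M)$ degreewise in $\ell$. I would compute the $n$-simplices explicitly: using \eqref{eq:sigmax1}, the tensor product $\Sigma_\GGG X(x_0,x_1)\otimes \cdots \otimes \Sigma_\GGG X(x_{n-1},x_n)$ vanishes unless $|x_0,\dots,x_n|<\infty$, in which case it is the free $\KK$-module on a generator of degree $|x_0,\dots,x_n|$. Tensoring with $M(x_0)$ and extracting the $\ell$-graded component gives the summand $M(x_0)_{\ell-|x_0,\dots,x_n|}$, so
\begin{equation}
S_n(\Sigma_\GGG X, M)_\ell \cong \widetilde{\MC}_{n,\ell}(X,M).
\end{equation}
I would then verify that the face maps agree: the inner face maps $d_i$ ($0<i<n$) come from the composition law \eqref{eq:sigmax2}, which picks out exactly the betweenness condition $x_{i-1}\preccurlyeq x_i \preccurlyeq x_{i+1}$ and produces the deletion map, matching \eqref{eq:partial_i}; the face $d_0$ is induced by the action $M_{c_0,c_1}$, which under Proposition \ref{prop:magmod-sigma} is exactly $m\mapsto m\cdot x_1$, matching \eqref{eq:partial_0}; and $d_n$ is induced by the augmentation \eqref{eq:sigmax3}, which vanishes unless $x_{n-1}=x_n$, matching the formula for $d_n$. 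The degeneracies are induced by the units $1_x$, which become doubling of points. Finally, passing to the associated normalized chain complex (which computes the same homology) collapses precisely the degenerate simplices where some $x_i=x_{i+1}$, yielding $\MC_{\bullet,\ell}(X,M)$.

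The hard part is not really conceptual but bureaucratic: keeping careful track of the grading shifts and verifying that the augmentation-induced $d_n$ in the abstract complex $S_\bullet$ corresponds correctly to the ``last-vertex deletion'' map in $\widetilde{\MC}_\bullet$, given that in the magnitude literature the formula is sometimes written as a plain deletion (relying on an implicit semicartesian structure). Here the augmentation $\varepsilon$ on $\Sigma_\GGG X$ is nonzero only on identities, so $d_n(m\otimes(x_0,\dots,x_n))$ vanishes unless $d(x_{n-1},x_n)=0$, i.e.\ $x_{n-1}=x_n$ by the quasimetric axiom. Once this identification is in place, combining it with Theorem \ref{th:homology_of_augmented} and taking the $\ell$-graded component of both sides gives $\MH_{n,\ell}(X,M)\cong (L_n\Coinv)(M)_\ell$.
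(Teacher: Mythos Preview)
Your proof is correct and follows essentially the same route as the paper: identify $\DMod_\GGG(X,\KK)$ with $\Mod(\Sigma_\GGG X)$ via Proposition \ref{prop:magmod-sigma}, verify that $\Sigma_\GGG X$ is $\AA$-flat over $\AA=\GGG\text{-}\GrMod(\KK)$, and then invoke the bar-resolution machinery. The paper phrases the last step as an appeal to Theorem \ref{th:magnitude_general} (together with the setup of the strong monoidal functor $\Sigma_\GGG:[0,\infty]^{op}\to\GGG\text{-}\GrMod(\KK)$), whereas you go one level deeper and cite Theorem \ref{th:homology_of_augmented} directly, spelling out the identification $S_\bullet(\Sigma_\GGG X,M)_\ell\cong\widetilde{\MC}_{\bullet,\ell}(X,M)$ for a general coefficient module $M$; this is actually a virtue, since Theorem \ref{th:magnitude_general} as stated only covers trivial coefficients.
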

\begin{proof}
In order to prove this theorem we will use Theorem \ref{th:magnitude_general}. We consider the B\'enabou cosmos of non-negative real numbers with infinity $\VV=[0,\infty]^{op}$ and abelian B\'enabou cosmos $\AA=\GGG\text{-}\GrMod(\KK)$ of modules graded by real numbers. The strong monoidal functor $\Sigma_\GGG : [0,\infty]^{op} \to  \GGG\text{-}\GrMod(\KK)$ is defined so that $\Sigma_\GGG(\ell) = \KK[\ell]$ for $\ell<\infty$ and $\Sigma_\GGG(\infty)=0.$ It is a strong monoidal functor with the obvious isomorphism
$\Sigma_\GGG(\ell+\ell')\cong \Sigma_\GGG(\ell)\otimes \Sigma_\GGG(\ell').$ Also note that \eqref{eq:A[ell]} implies that the $\RR$-graded modules $\KK[\ell]$ are $\AA$-flat and $\AA$ projective. 
So the functor $\Sigma_\GGG$ sends objects of $\VV$ to $\AA$-flat and $\AA$-projective objects. The quasimetric space $X$ is treated as a $[0,\infty]$-category in the usual way. The augmentation $\varepsilon:X\to \pt$ is uniquely defined.  Then  $\Sigma_\GGG X$ is the  $\AA$-category defined by the formulas \eqref{eq:sigmax1}, \eqref{eq:sigmax2}, \eqref{eq:sigmax3}. So the
 statement follows from Proposition \ref{prop:magmod-sigma} and Theorem \ref{th:magnitude_general}. 
\end{proof}

\subsection{Magnitude cohomology of a quasimetric space} Let us recall the definition of the  magnitude cohomology of $X$ from \cite{hepworth2022magnitude}. For each real $\ell$ we consider the cosimplicial module
\begin{equation}
\widetilde{\MC}^{\bullet,\ell}(X) = \Hom(\widetilde{\MC}_{\bullet,\ell}(X),\KK)
\end{equation}
and the associated normalized cochain complex 
\begin{equation}
\MC^{\bullet,\ell}(X) = \Hom({\MC}_{\bullet,\ell}(X),\KK),
\end{equation}
whose cohomology is the magnitude cohomology of $X$
\begin{equation}
\MC^{n,\ell}(X)=H^n( \MC^{\bullet,\ell}(X)) \cong H^n( \widetilde{\MC}^{\bullet,\ell}(X) ). 
\end{equation}

Now we define the magnitude cohomology of $X$ with coefficients in a distance module $M$. For a real $\ell$ we consider a cosimplicial module  $\widetilde{\MC}^{\bullet,\ell}(\CC,M)$ such that 
\begin{equation}
\widetilde{\MC}^{n,\ell}(\CC,M) =  \prod_{|x_0,\dots,x_n|<\infty} 
M(x_n)_{-\ell+|x_0,\dots,x_n|}.
\end{equation}
For $\alpha\in \widetilde{\MC}^{n,\ell}(\CC,M)$ 
we denote by $\alpha(x_0,\dots,x_n)$ its projection in $M(x_n)_{-\ell+|x_0,\dots,x_n|}.$ Then the face maps $d^i:\widetilde{\MC}^{n,\ell}(\CC,M) \to \widetilde{\MC}^{n+1,\ell}(\CC,M)$ for $0\leq i\leq n+1$ are defined as follows. For $0<i<n+1$ we have
\begin{equation}
d^i(\alpha)(x_0,\dots,x_{n+1}) =
\begin{cases}
\alpha(x_0,\dots,\hat{x}_i,\dots,x_{n+1}),& x_{i-1} \preccurlyeq x_i \preccurlyeq x_{i+1}, \\
0, & \text{else,}
\end{cases}
\end{equation}
and for $i=0,n$ we have 
\begin{equation}
d^0(\alpha)(x_0,\dots,x_{n+1}) = \begin{cases}
\alpha(x_1,\dots,x_{n+1}), & x_0=x_1\\
  0,& \text{else},
\end{cases}
\end{equation}
and
\begin{equation}
d^{n+1}(\alpha)(x_0,\dots,x_{n+1}) = \alpha(x_0,\dots,x_n)\cdot x_{n+1}.
\end{equation}
The codegeneracy maps are defined by $s^i(\alpha)(x_0,\dots,x_{n-1}) = \alpha(x_0,\dots,x_i,x_i,\dots,x_n).$ The associated normalized cochain complex is denoted by $\MC^{\bullet,\ell}(X,M)$
\begin{equation}
{\MC}^{n,\ell}(X,M) =  \prod_{\substack{|x_0,\dots,x_n|<\infty \\  x_i\ne x_{i+1} }} 
M(x_n)_{-\ell+|x_0,\dots,x_n|}.
\end{equation}
The magnitude cohomology of $X$ with coefficients in $M$ is defined as cohomology of this complex
\begin{equation}
\MH^{n,\ell}(X,M)=H^n( \MC^{\bullet,\ell}(X,M))\cong H^n( \widetilde{\MC}^{\bullet,\ell}(X,M) ). 
\end{equation}
It is easy to see that 
\begin{equation}
\MH^{n,\ell}(X)\cong  \MH^{n,\ell}(X,\tilde \KK).
\end{equation}

\begin{theorem}\label{th:quasimetric2}
The category of $\GGG$-graded distance modules $\DMod_\GGG(X,\KK)$ has enough of injectives, the functor of invariants 
\begin{equation}
\Inv : \DMod_\GGG(X,\KK)\longrightarrow \GGG\text{-}\GrMod(\KK)
\end{equation}
is left exact, and  the magnitude cohomology is isomorphic to its right derived functors   
\begin{equation}\label{eq:mag_qm_cohom}
\MH^{n,\ell}(X,M)\cong (R^n\Inv)(M)_{-\ell}.
\end{equation}
\end{theorem}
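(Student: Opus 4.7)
The plan is to proceed in exact parallel with the proof of Theorem \ref{th:quasimetric}, invoking Theorem \ref{th:magnitude_general2} in place of Theorem \ref{th:magnitude_general}. Taking $\VV=[0,\infty]^{op}$, $\AA=\GGG\text{-}\GrMod(\KK)$, and the strong monoidal functor $\Sigma_\GGG:\VV\to \AA$ sending $\ell$ to $\KK[\ell]$ (and $\infty$ to $0$), the associated augmented $\AA$-category $\Sigma_\GGG X$ is the one described in \eqref{eq:sigmax1}--\eqref{eq:sigmax3}. To apply Theorem \ref{th:magnitude_general2}, I verify: $\AA$ has enough injectives (standard for the category of graded modules over a commutative ring); each $\Sigma_\GGG X(x,y)=\KK[d(x,y)]$ is $\AA$-projective; and in particular $\1_\AA=\KK[0]$ is $\AA$-projective. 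The last two follow from formula \eqref{eq:A[ell]}, exactly as in the proof of Theorem \ref{th:quasimetric}.

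The conclusion of Theorem \ref{th:magnitude_general2} then gives $\MH_{\Sigma_\GGG}^*(X,M)\cong H^*(\Sigma_\GGG X,M)$ and shows that $\Mod(\Sigma_\GGG X)$ has enough injectives. Via Proposition \ref{prop:magmod-sigma}, these statements transport to $\DMod_\GGG(X,\KK)$: it has enough injectives, and the functor of invariants on distance modules corresponds to $\Inv=\varepsilon_*$ on the enriched category side. Being a right adjoint between abelian categories (Proposition \ref{prop:magmod-sigma}), this functor is left exact.

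To settle the grading convention in \eqref{eq:mag_qm_cohom}, I compare, grade by grade, the normalized cobar complex produced by Theorem \ref{th:cohomology_of_augmented} with $\MC^{\bullet,\ell}(X,M)$ in the $(-\ell)$-graded component. The key identification is
\begin{equation*}
\AA\bigl(\KK[d(x_0,x_1)]\otimes\dots\otimes \KK[d(x_{n-1},x_n)],\,M(x_n)\bigr)_{-\ell}\ \cong\ M(x_n)_{-\ell+|x_0,\dots,x_n|},
\end{equation*}
which comes from the adjunction $-\otimes \KK[a]\dashv \AA(\KK[a],-)$ in $\AA$ and matches the summand defining $\MC^{n,\ell}(X,M)$. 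Under this identification the cofaces dual to the composition \eqref{eq:sigmax2} recover \eqref{eq:partial_i}; the coface $d^0$ built from the augmentation \eqref{eq:sigmax3} produces the $x_0=x_1$ condition; and the final coface $d^{n+1}$, coming from the unit of the $\Forg\dashv\CoFr$ adjunction, yields the right action $\alpha(x_0,\dots,x_n)\cdot x_{n+1}$. Passing to the normalised complex and taking cohomology then gives \eqref{eq:mag_qm_cohom}. The main obstacle, such as it is, lies in checking that every cosimplicial operator of $S^\bullet(\Sigma_\GGG X,M)$ preserves the $(-\ell)$-graded piece and agrees with the corresponding map in $\MC^{\bullet,\ell}(X,M)$; since all compositions in $\Sigma_\GGG X$ are degree-preserving on the non-zero summands (where $x\preccurlyeq y\preccurlyeq z$), this is a routine bookkeeping exercise rather than a substantive difficulty.
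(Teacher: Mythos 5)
Your proposal is correct and follows exactly the route the paper intends: the paper's own proof consists of the single sentence ``similar to the proof of Theorem \ref{th:quasimetric},'' and what you have written is precisely the dualization of that argument — invoking Theorem \ref{th:magnitude_general2} with the same cosmos $\AA=\GGG\text{-}\GrMod(\KK)$ and functor $\Sigma_\GGG$, checking $\AA$-projectivity of $\KK[\ell]$ and $\1_\AA$ via \eqref{eq:A[ell]}, transporting through Proposition \ref{prop:magmod-sigma}, and matching the cobar complex of Theorem \ref{th:cohomology_of_augmented} with $\MC^{\bullet,\ell}(X,M)$ in the $(-\ell)$-component. Your explicit verification of the grading identification $\AA(\KK[|x_0,\dots,x_n|],M(x_n))_{-\ell}\cong M(x_n)_{-\ell+|x_0,\dots,x_n|}$ is exactly the bookkeeping the paper leaves implicit.
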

\begin{proof}
The proof is similar to the proof of Theorem \ref{th:quasimetric}.
\end{proof}

\begin{remark}
Note that in the formula  \eqref{eq:mag_qm_cohom} we have $-\ell$ in the subscript of the right-hand term. For this reason, we write $\ell$ in superscript in the notation for magnitude cohomology  $\MC^{n,\ell}(X,M)$.
\end{remark}

\begin{corollary}\label{cor:cohom-ext}
For any $\GGG$-graded distance module $M$ there magnitude cohomology with coefficients in $M$ can be presented as the Ext in the category of distance modules 
\begin{equation}
\MH^{n,\ell}(X,M)\cong \Ext^{n}(\tilde \KK,M[\ell]).
\end{equation}
\end{corollary}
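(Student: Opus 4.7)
The plan is to combine Theorem~\ref{th:quasimetric2} with Proposition~\ref{prop:invariants_as_hom} and exploit the exactness of the grading shift. First I would apply Theorem~\ref{th:quasimetric2} to rewrite the magnitude cohomology as
\[ \MH^{n,\ell}(X,M) \cong (R^n\Inv)(M)_{-\ell}. \]
Next, note that for any fixed degree $\ell' \in \GGG$ the functor $(-)_{\ell'} : \GGG\text{-}\GrMod(\KK) \to \KK\text{-}\Mod$ picking out the $\ell'$-graded piece is exact (kernels, cokernels and coproducts of graded modules are computed degreewise). Since an exact functor commutes with right derived functors, one obtains
\[ (R^n\Inv)(M)_{-\ell} \cong R^n\bigl(\Inv(-)_{-\ell}\bigr)(M). \]

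The next step is to identify the left exact functor $\Inv(-)_{-\ell}$ with a suitable $\Hom$ functor. Proposition~\ref{prop:invariants_as_hom} furnishes a natural isomorphism
\[ \Hom(\tilde\KK[-\ell],M) \cong \Inv(M)_{-\ell} \]
of functors on $\DMod_\GGG(X,\KK)$. Taking right derived functors of both sides, and using that Ext in an abelian category with enough injectives is the right derived functor of $\Hom$, I obtain
\[ R^n\bigl(\Inv(-)_{-\ell}\bigr)(M) \cong \Ext^n(\tilde\KK[-\ell],M). \]

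To finish, I would argue that the grading shift $[\ell]: \DMod_\GGG(X,\KK) \to \DMod_\GGG(X,\KK)$ is an exact autoequivalence (its inverse is $[-\ell]$, and it acts levelwise on each $M(x)$, so it preserves kernels, cokernels, and carries injectives to injectives). Shifting both arguments of $\Hom$ simultaneously does not change the morphism set, hence induces a natural isomorphism $\Ext^n(A,B) \cong \Ext^n(A[\ell],B[\ell])$ for any $A,B$. Applied to $A=\tilde\KK[-\ell]$ and $B=M$ this yields
\[ \Ext^n(\tilde\KK[-\ell],M) \cong \Ext^n(\tilde\KK,M[\ell]), \]
and combining the displayed isomorphisms gives the corollary. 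The only point that requires a little care is the shift step, since one must check that $\ell \in \GGG$ (so that the shift preserves the $\GGG$-graded subcategory); but this is harmless because the magnitude cochain complex is supported in degrees $\ell$ that are finite sums of distances, and all such sums lie in $\GGG$ by the standing assumption, so both sides vanish outside $\GGG$.
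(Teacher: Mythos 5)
Your proposal is correct and follows essentially the same route as the paper, whose proof is simply the one-line observation that the corollary follows from Theorem \ref{th:quasimetric2} and Proposition \ref{prop:invariants_as_hom}; you have merely made explicit the (correct) bookkeeping with the exact degree-selection functor and the shift autoequivalence, including the sign conventions $\Inv(M)_{-\ell}\cong\Hom(\tilde\KK[-\ell],M)\cong\Hom(\tilde\KK,M[\ell])$.
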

\begin{proof}
It follows from Theorem \ref{th:quasimetric2} and Proposition \ref{prop:invariants_as_hom}. 
\end{proof}

\section{Magnitude homology of a finite quasimetric space}

\subsection{Distance algebra of a finite quasimetric space} 

Let $X$ be a finite quasimetric space (\S \ref{quasi}) and $\GGG\subseteq \RR$ be a subgroup of the additive group of real numbers such that $d(x,y)\in \GGG\cup\{\infty\}$ for any points $x,y$ of $X.$ Consider a $\GGG$-graded algebra $\sigma X$ which as a $\KK$-module is freely generated by pairs of points $(x,y)$ such that $d(x,y)<\infty$ and the product is defined by 
\begin{equation}
(x,y)\cdot (y',z) = \begin{cases}
(x,z), & y=y', x \preccurlyeq y \preccurlyeq z\\
0, & \text{else.}
\end{cases}
\end{equation}
The algebra $\sigma X$ will be referred to as the distance algebra. Note that the elements $e_x:=(x,x)$ are orthogonal idempotents and $1=\sum_{x\in X} e_x$ is the unit of this algebra. So the set $\{e_x\mid x\in X\}$ is a complete set of orthogonal idempotents. Note that in order to define the unit, we need to use that $X$ is finite. The $\GGG$-grading is defined by the formula $|(x,y)|=d(x,y).$ In other words, $(\sigma X)_\ell$ is spanned by pairs $(x,y)$ such that $d(x,y)=\ell.$ In particular, $(\sigma X)_0$ is spanned by the idempotents $e_x.$ 

\begin{proposition}\label{prop:magmod_equivalence}
There is an equivalence of categories 
\begin{equation}
\DMod_\GGG(X) \simeq \GGG\text{-}\GrMod(\sigma X), \hspace{1cm} M\mapsto \bar M
\end{equation}
that sends a $\GGG$-graded distance module $M$ to a $\GGG$-graded right $\sigma X$-module 
\begin{equation}
\bar M=\bigoplus_{x\in X} M(x),
\end{equation}
where the structure of $\sigma X$-module is defined by $m\cdot (x,y)=m\cdot y,$ if $m\in M(x),$ and by $m\cdot (x,y)=0,$ if $m\in M(x'), x\neq x'.$   
\end{proposition}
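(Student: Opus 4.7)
The approach is to use the finiteness of $X$ to exhibit $\sigma X$ as the ``category algebra'' of the enriched category $\Sigma_\GGG X$: the crucial point is that $\{e_x = (x,x) \mid x \in X\}$ is a finite complete set of orthogonal idempotents with $\sum_x e_x = 1$, so any $\GGG$-graded right $\sigma X$-module $N$ decomposes canonically as $N = \bigoplus_{x \in X} Ne_x$. This decomposition will furnish the inverse equivalence, and the structure maps of a distance module will be recorded by the action of the pairs $(x,y)$ with $d(x,y) < \infty$. Combined with Proposition \ref{prop:magmod-sigma}, which already identifies $\DMod_\GGG(X,\KK)$ with $\Mod(\Sigma_\GGG X)$, the statement becomes a Morita-type correspondence whose verification is essentially bookkeeping.

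First I would check that $M \mapsto \bar M$ is well-defined. The underlying $\GGG$-graded $\KK$-module $\bar M = \bigoplus_{x \in X} M(x)$ is clear. For the $\sigma X$-action, associativity $(m \cdot (x,y)) \cdot (y',z) = m \cdot ((x,y)(y',z))$ splits into cases: if $m \in M(x_0)$ with $x_0 \neq x$, both sides vanish by the definition of the action; if $x_0 = x$, a subcase on whether $y = y'$ reduces the verification to the distance module axiom \eqref{eq:magnitude_definition}. Unitality $m \cdot 1 = \sum_{x} m \cdot e_x = m$ follows because $M(x,x) = 1_{M(x)}$. A morphism $\varphi : M \to N$ of distance modules assembles into $\bar \varphi = \bigoplus_x \varphi(x)$, and $\sigma X$-linearity of $\bar \varphi$ is a direct translation of the compatibility $\varphi(x)(m) \cdot y = \varphi(y)(m \cdot y)$.

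Next I would construct the inverse. Given a $\GGG$-graded right $\sigma X$-module $N$, set $M_N(x) := Ne_x$ as a graded submodule of $N$, and define $M_N(x,y) : Ne_x \to Ne_y$ as right multiplication by $(x,y)$; this lands in $Ne_y$ because $(x,y) = e_x(x,y)e_y$, and it has degree $d(x,y)$ since $(x,y) \in (\sigma X)_{d(x,y)}$. The identity axiom $M_N(x,x) = 1$ and the composition axiom \eqref{eq:magnitude_definition} are immediate from the multiplication table of $\sigma X$. Any graded $\sigma X$-linear map $f : N \to N'$ restricts to a family $(f|_{Ne_x})_x$ that visibly satisfies the compatibility for a morphism of distance modules.

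Finally, the two functors are mutually inverse up to natural isomorphism: for a distance module $M$, the identifications $\bar M \cdot e_x = M(x)$ and right multiplication by $(x,y)$ equal to $M(x,y)$ are tautological; for a module $N$, the equality $\bigoplus_x Ne_x = N$ uses precisely that $\sum_x e_x = 1$, which is where the finiteness of $X$ is essential. Both identifications are natural in $M$ and $N$, yielding the equivalence. I do not expect a substantial obstacle: the only delicate point is keeping the right/left conventions and the grading consistent, and the only place the finiteness hypothesis is indispensable is in defining the unit of $\sigma X$ and in recovering $N$ from its idempotent pieces.
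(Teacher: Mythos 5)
Your proposal is correct and follows essentially the same route as the paper: the inverse functor is given by $N\mapsto (Ne_x)_{x\in X}$ with structure maps given by right multiplication by $(x,y)$, and the key point in both arguments is that $\{e_x\}$ is a complete set of orthogonal idempotents, so $N=\bigoplus_x Ne_x$. You simply spell out the associativity, unitality, and naturality checks that the paper leaves as ``straightforward.''
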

\begin{proof}
It is easy to check that the construction $M\mapsto \hat M$ from the statement  defines a functor $\DMod_\GGG(X) \to \GGG\text{-}\GrMod(\sigma X)$. Let us construct the functor in the opposite direction. If $N$ is a graded right $\sigma X$-module, we construct a distance module $\hat N$ so that $\hat N(x)=Ne_x$ and $n\cdot y=n\cdot (x,y)$ for $n\in \hat{N}(x).$ The fact that  $\{e_x\mid x\in X\}$ is a complete set of orthogonal idempotents implies that  $N=\bigoplus_{x\in X} Ne_x.$ Using this, the proof that $N\mapsto \hat N$ is a functor opposite to the functor $M\mapsto \bar M$ is straightforward. 
\end{proof}

\subsection{Magnitude homology of a finite quasimetric space}
For a finite quasimetric space $X$ we denote by $JX$ the homogeneous ideal of $\sigma X$ generated by all pairs of positive degree. It is easy to check that if $n$ is the cardinality of $X,$ then 
\begin{equation}
(JX)^n=0.
\end{equation}
Therefore, $JX$ is a nilpotent ideal. We set 
\begin{equation}
S = \sigma X / JX.
\end{equation}
As an algebra $S$ is isomorphic to the product $\prod_{x\in X}\KK,$ where each factor is generated by the corresponding idempotent $e_x.$ If $\KK$ is a principal ideal domain, then the Jacobson radical of $S$ is trivial. Using that $JX$ is nilpotent, in this case we obtain that $JX$ is the Jacobson radical of $\sigma X$ (that is why this notation was chosen).

Since $S$ is a quotient graded algebra of $\sigma X$, it can be treated as a graded right $\sigma X$-module and a graded left $\sigma X$-module. This graded module is concentrated in degree zero and it is equal to the direct sum of graded submodules 
\begin{equation}
S = \bigoplus_{x\in X} S_x,
\end{equation}
where $S_x=\KK \cdot e_x.$ Using this formula we obtain that 
$S$ corresponds to the trivial distance module $\tilde \KK=\Triv(\KK[0])$ 
\begin{equation}\label{eq:S-bar}
S=\bar{\tilde \KK}.    
\end{equation}

\begin{proposition}\label{prop:coinvariants_finite}
If $M$ is a $\GGG$-graded distance module over $X$ and $\bar M$ is the corresponding graded $\sigma X$-module, then 
\begin{equation}
\Coinv(M)\cong \bar M\otimes_{\sigma X} S, \hspace{1cm} \Inv(M)\cong \Hom_{\sigma X}(S,\bar M)
\end{equation}
\end{proposition}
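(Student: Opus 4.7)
The plan is to transport everything across the equivalence $\DMod_\GGG(X)\simeq \GGG\text{-}\GrMod(\sigma X)$ of Proposition \ref{prop:magmod_equivalence} and then compute the tensor product and the Hom directly, using the idempotent decomposition $1=\sum_{x\in X} e_x$ and the identification $S = \sigma X/JX$.

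For the coinvariants statement, I would first write $\bar M\otimes_{\sigma X} S\cong \bar M/\bar M\cdot JX$, which is immediate from the right-exact sequence $JX\hookrightarrow \sigma X\twoheadrightarrow S$. Using the decomposition $\bar M=\bigoplus_x M(x)$ and the fact that $JX$ is spanned by pairs $(y,x)$ with $y\neq x$ and $d(y,x)<\infty$, I would identify the $e_x$-component of $\bar M\cdot JX$ with the submodule of $M(x)$ generated by the elements $m\cdot x=M(y,x)(m)$ for $m\in M(y)$, $y\neq x$, $d(y,x)<\infty$. Comparing with the definition \eqref{eq:coinv_x}, this gives $(\bar M/\bar M\cdot JX)e_x\cong \Coinv_x(M)$ in each degree $\ell$. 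Summing over $x$ yields $\bar M\otimes_{\sigma X}S\cong \Coinv(M)$.

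For the invariants statement, I would use that a graded $\sigma X$-linear map $\varphi\colon S\to \bar M$ of degree $\ell$ is determined by the elements $m_x:=\varphi(\bar e_x)\in \bar M_\ell$ for $x\in X$, since $\{\bar e_x\}$ generates $S$ as a right $\sigma X$-module. The relation $\bar e_x\cdot e_x=\bar e_x$ forces $m_x\in M(x)_\ell$, and for $y\neq x$ with $d(x,y)<\infty$, the computation $\bar e_x\cdot(x,y)=\overline{(x,y)}=0$ in $S$ (because $(x,y)\in JX$) forces $m_x\cdot y=M(x,y)(m_x)=0$; all remaining defining relations of $\sigma X$ give no new conditions, since the actions $\bar e_x\cdot(y,z)$ and $m_x\cdot(y,z)$ both vanish when $y\neq x$ thanks to the orthogonality of the idempotents $e_x$. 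This means precisely that $m_x\in \Inv_x(M)_\ell$. Assembling, $\Hom_{\sigma X}(S,\bar M)_\ell\cong \prod_{x} \Inv_x(M)_\ell = \Inv(M)_\ell$, naturally in $M$.

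The entire argument is essentially bookkeeping with idempotents and gradings; there is no deep obstacle. The only point that requires a little care is checking that the conditions coming from the $\sigma X$-linearity on all basis elements $(y,z)$ of $\sigma X$ reduce to the single family of conditions $m_x\cdot y=0$ for $y\neq x$, $d(x,y)<\infty$ (and dually, on the coinvariants side, that the relations in $\bar M\cdot JX$ reduce correctly after projecting to each $e_x$-component). Both reductions follow immediately from $e_xe_y=\delta_{xy}e_x$ and from the fact that $e_x\cdot(y,z)=0$ whenever $y\neq x$ in $\sigma X$.
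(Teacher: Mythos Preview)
Your proposal is correct and follows essentially the same approach as the paper: for coinvariants you use $\bar M\otimes_{\sigma X}S\cong \bar M/(\bar M\cdot JX)$ and decompose along the idempotents to recover \eqref{eq:coinv_x}, exactly as the paper does, and for invariants you spell out the dual argument that the paper only indicates with ``the second isomorphism can be proved similarly.'' No substantive difference.
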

\begin{proof}
Since $S=\sigma X/JX,$ we have $\bar M\otimes_{\sigma X}S\cong \bar M/(\bar M \cdot JX).$ Since $\bar M=\bigoplus_x M(x),$ we obtain that $\bar M/(\bar M\tilde JX)$ is a direct sum of $\KK$-modules 
\begin{equation}
 M(x)/ \left(\sum_{y\neq x, d(y,x)<\infty}  M(y)\cdot (y,x)\right).
\end{equation}
Comparing this with the formula \eqref{eq:coinv_x}, we obtain the isomorphism $\Coinv(M)\cong \bar M\otimes_{\sigma G} S.$ The second isomorphism can be proved similarly.
\end{proof}

In the statement of the next theorem we use bigraded Tor and Ext functors. For the definition see Subsection \ref{subsection:graded_modules_over_graded_algebras} of the Appendix.

\begin{theorem}\label{th:magnitude_homology_finite} Let $\KK$ be a commutative ring, $\GGG\subseteq \RR$ be a subgroup, $X$ be a finite quasimetric space such that $d(x,y)\in \GGG \cup \{\infty\},$ and $M$ be a $\GGG$-graded distance module over $X$. Then for any $(n,\ell)\in \ZZ\times \GGG$ there are isomorphisms 
\begin{equation}
\MH_{n,\ell}(X,M) \cong {\rm Tor}^{\sigma X}_{n,\ell}(\bar M,S), \hspace{1cm} \MH^{n,\ell}(X,M)\cong {\rm Ext}^{n,\ell}_{\sigma X}(S,\bar M).
\end{equation}
In particular, we obtain
\begin{equation}
\MH_{n,\ell}(X) \cong {\rm Tor}^{\sigma X}_{n,\ell}(S,S), \hspace{1cm} \MH^{n,\ell}(X)\cong {\rm Ext}^{n,\ell}_{\sigma X}(S,S).
\end{equation}
\end{theorem}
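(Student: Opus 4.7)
The plan is to chain together the previous results: Theorem~\ref{th:quasimetric}/Theorem~\ref{th:quasimetric2} present magnitude (co)homology as (co)invariants of distance modules; Proposition~\ref{prop:magmod_equivalence} transports this across an equivalence of abelian categories to graded $\sigma X$-modules; and Proposition~\ref{prop:coinvariants_finite} recognises the resulting functors as $-\!\otimes_{\sigma X}\! S$ and $\Hom_{\sigma X}(S,-)$, after which the derived-functor descriptions become bigraded Tor and Ext by definition.

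More precisely, first I would note that the functor $M\mapsto \bar M$ of Proposition~\ref{prop:magmod_equivalence} is an equivalence of abelian categories, so it is exact, sends projectives to projectives and injectives to injectives, and hence commutes with left and right derived functors of any additive functor out of $\DMod_\GGG(X,\KK)$. Under this equivalence, Proposition~\ref{prop:coinvariants_finite} gives natural isomorphisms
\begin{equation}
\Coinv(-) \cong \overline{(-)} \otimes_{\sigma X} S, \hspace{1cm} \Inv(-) \cong \Hom_{\sigma X}(S, \overline{(-)}),
\end{equation}
of functors $\DMod_\GGG(X,\KK)\to \GGG\text{-}\GrMod(\KK)$. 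Consequently the derived functors of $\Coinv$ and $\Inv$ coincide, under the equivalence, with the derived functors of $-\otimes_{\sigma X} S$ and $\Hom_{\sigma X}(S,-)$ on the category $\GGG\text{-}\GrMod(\sigma X)$; by the definition of bigraded Tor and Ext recalled in the Appendix, these are respectively $\Tor^{\sigma X}_{n,\ell}(\bar M,S)$ and $\Ext^{n,\ell}_{\sigma X}(S,\bar M)$.

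To finish the first pair of isomorphisms, I would invoke Theorem~\ref{th:quasimetric} to identify $\MH_{n,\ell}(X,M)$ with $(L_n\Coinv)(M)_\ell$ and then apply the chain of identifications above. For the cohomological statement I would use Theorem~\ref{th:quasimetric2}, being mindful of the sign convention: that theorem yields $\MH^{n,\ell}(X,M)\cong (R^n\Inv)(M)_{-\ell}$, and the bigraded Ext in the Appendix is set up so that the internal degree shift $[\ell]$ on $S$ matches this $-\ell$ evaluation, giving $\Ext^{n,\ell}_{\sigma X}(S,\bar M)$. Finally, specialising to $M=\tilde\KK$ and recalling \eqref{eq:S-bar} that $\bar{\tilde\KK}=S$ yields the two displayed isomorphisms with $S$ on both sides.

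The only real point of care, and the step I expect to require the most attention, is the compatibility of the internal $\GGG$-grading across the three formalisms: the grading on $\Coinv$ and $\Inv$ defined component-wise in \eqref{eq:coinv:mag} and \eqref{eq:inv:mag}, the internal grading on $\bar M\otimes_{\sigma X}S$ and $\Hom_{\sigma X}(S,\bar M)$ coming from the $\GGG$-grading on $\sigma X$, and the bigrading on Tor and Ext that uses internally graded projective (resp. injective) resolutions. Once one checks that the isomorphisms of Proposition~\ref{prop:coinvariants_finite} are homogeneous of degree zero and that the equivalence $\DMod_\GGG(X,\KK)\simeq \GGG\text{-}\GrMod(\sigma X)$ preserves graded projectives and injectives, the bookkeeping of the index $\ell$ (with the sign flip in the cohomological case) becomes automatic and the theorem follows.
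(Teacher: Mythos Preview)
Your proposal is correct and follows essentially the same approach as the paper: the paper's proof simply cites Theorems~\ref{th:quasimetric} and~\ref{th:quasimetric2}, Propositions~\ref{prop:magmod_equivalence} and~\ref{prop:coinvariants_finite}, and equation~\eqref{eq:S-bar}, which is exactly the chain of identifications you spell out. Your additional care about the equivalence preserving projectives/injectives and about the degree-zero compatibility of the isomorphisms in Proposition~\ref{prop:coinvariants_finite} is appropriate bookkeeping that the paper leaves implicit.
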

\begin{proof}
It follows from Theorems \ref{th:quasimetric},  \ref{th:quasimetric2},  Propositions \ref{prop:magmod_equivalence},  \ref{prop:coinvariants_finite}, and equation \eqref{eq:S-bar}.
\end{proof}

\begin{remark}[Bar resolution of $S$]\label{remark:projective}
The statement of Theorem  \ref{th:magnitude_homology_finite} can be extended by constructing an explicit projective resolution of the graded right $\sigma X$-module $S$. The resolution from Proposition \ref{prop:bar_resolution} together with the equivalence from Proposition \ref{prop:magmod_equivalence} defines a graded projective resolution $P_\bullet$ of the graded $\sigma X$-module $S$ such that 
\begin{equation}
(P_n)_\ell =\KK\cdot  
\{(x_0,\dots,x_n,y)\mid |x_0,\dots,x_n,y|=\ell \},
\end{equation}
and the differential is defined by $\partial=\sum_{i=0}^n (-1)d_i,$ where $d_i$ is defined by 
\begin{equation}
d_i(x_0,\dots,x_n,y)= 
\begin{cases}
(x_0,\dots,\hat x_i,\dots,x_n,y), & |x_0,\dots,\hat x_i,\dots,x_n,y|=|x_0,\dots,x_n,y|\\
0, & \text{else.}
\end{cases}
\end{equation}
The structure  of $\sigma X$-module on $P_n$ is defined by $(x_0,\dots,x_n,y)\cdot (y',z)=(x_0,\dots,x_n,z),$ if $|x_0,\dots,x_n,y|=|x_0,\dots,x_n,z|$ and $y=y',$ and by $(x_0,\dots,x_n,y)\cdot (y',z)=0$ otherwise. The module $P_n$ is projective, because there is an isomorphism of $\sigma X$-modules
\begin{equation}
P_n\cong \bigoplus_{|x_0,\dots,x_n|<\infty} (e_{x_n}\cdot \sigma X)[|x_0,\dots,x_n|],   
\end{equation}
where $e_{x_n}\cdot \sigma X$ is the projective  summand of $\sigma X$ corresponding to the idempotent $e_{x_n}.$ 
\end{remark}

\begin{remark}
Apparently, the isomorphism $\MH_{n,\ell}(X,M)\cong \Tor^{\sigma X}_{n,\ell}(\bar M,S)$ could be generalized to the case of any quasimetric space. In order to do this, for a right distance module $M$ over $X$ and a left distance module $N$ over $X$ we need to introduce the tensor product $M\otimes_X N,$ which is an $\RR$-graded module. Then we need to define the Tor functor of distance modules $\Tor^X_n$ as the derived functor and show that $\MH_{n,\ell}(X,M)\cong \Tor^X_{n}(M,\tilde \KK)_\ell.$
\end{remark}

\subsection{Magnitude cohomology as a Yoneda algebra}  Usually the Yoneda product is defined in terms of $n$-fold extensions \cite[Ch.III,\S 5]{maclane2012homology} \cite{buchsbaum1959note}, \cite{yoneda1958note}, \cite{yoneda1954homology} but we use the description in terms of projective resolutions that can be found in \cite[\S 6 Exercise 2]{maclane2012homology}.

Let  $\AA$ be an abelian category with enough of projectives and $a,b,c$ are its objects. Then the Yoneda product 
\begin{equation}
\Ext^m(b,c) \times \Ext^n(a,b) \longrightarrow \Ext^{n+m}(a,c)
\end{equation}
is defined as follows. Let $p_\bullet$ be a projective resolution of $a$ and $q_\bullet$ be a projective resolution of $b.$ Assume that $\alpha:p_n\to b$ and $\beta:q_m\to c$ are two cycles representing elements from $\Ext^n(a,b)=H^n(\Hom(p_\bullet,b))$ and $\Ext^m(b,c)=H^m(\Hom(q_\bullet,c)).$ The fact that $\alpha$ is a cocycle means that $\alpha \partial^p_{n+1}=0.$ Therefore, $\alpha$ induces a morphism $\alpha': \Coker(\partial^p_{n+1})\longrightarrow b.$ This morphism can be lifted to the morphism of resolutions uniquely up to homotopy.
\begin{equation}
\begin{tikzcd}
\dots \ar[r]  & p_{n+m}\ar[r] \ar[d,"\alpha_{m}"] & p_{n+m-1}\ar[r] \ar[d,"\alpha_{m-1}"] & \dots \ar[r] & p_n \ar[d,"\alpha_0"] \ar[r] & \Coker(\partial^p_{n+1})\ar[d,"\alpha'"] \\
 \dots \ar[r] & q_m \ar[r] & q_{m-1}\ar[r] & \dots \ar[r] & q_0\ar[r] & b 
\end{tikzcd}
\end{equation}
Then the Yoneda product is defined as  
\begin{equation}
[\beta]\circ [\alpha] = [\beta\circ \alpha_m].
\end{equation}
The Yoneda product is bilinear. This allows us to define a structure of a graded algebra on ${\rm Ext}^*_\AA(a,a),$ which is called Yoneda algebra of $a$
\begin{equation}
\mathcal{Y}^*_\AA(a) = \Ext^*_\AA(a,a).
\end{equation}

If $\GGG$ is an abelian group,  $\Lambda$ is a $\GGG$-graded algebra and $M$ is a $\GGG$-graded $\Lambda$-module, then the $\ZZ\times\GGG$-graded Yoneda algebra of $M$ is defined as
\begin{equation}
\mathcal{Y}^{*,*}_\Lambda(M)= \Ext^{*,*}_\Lambda(M,M) = \bigoplus_{\ell\in \GGG} \Ext^*_{\GGG\text{-}\GrMod(\Lambda)}(M,M[\ell]),
\end{equation}
where the product is also defined by the Yoneda product (see \eqref{eq:ext_bigrad_as_ext_in_cat}). If $P_\bullet$ is a projective resolution of $M,$ then $P_\bullet[\ell]$ is a projective resolution of $M[\ell].$ An element of 
 $\Ext^{n,\ell}_\Lambda(M,M)$ can be presented by a homomorphism $\alpha:P_n\to M[\ell]$ such that $\alpha \partial^P_{n+1} =0.$ Then for another such a homomorphism   $\beta:P_n \to M[s]$ we have  
\begin{equation}
[\beta] \circ [\alpha] = [\beta[\ell] \circ \alpha_m].
\end{equation}
All these definitions work similarly for left and right modules. 

Now let us recall the definition of the product for the magnitude cohomology of a quasimetric space $X$ from  \cite{hepworth2022magnitude}. First we define the structure of a dg-algebra on
\begin{equation}
\MC^{\bullet,*}(X) = \bigoplus_{\ell\in \GGG} \MC^{\bullet,\ell}(X).    
\end{equation}
If $\varphi\in \MC^{n,\ell}(X)$ and $\psi\in \MC^{m,s}(X),$ then $\varphi\cdot \psi \in \MC^{n+m,\ell+s}(X)$ is defined by 
\begin{equation}
( \psi\cdot  \varphi)(x_0,\dots,x_{n+m}) = 
\psi(x_0,\dots,x_m)\cdot \varphi(x_m,\dots,x_{n+m})    
\end{equation}
if  $|x_0,\dots,x_n|=\ell$ and  $|x_n,\dots,x_{n+m}|=s;$ and by 
\begin{equation}
(\varphi \cdot \psi )(x_0,\dots,x_{n+m})=0 
\end{equation}
otherwise. This product induces a product on the magnitude cohomology that defines a structure of a $\ZZ\times\GGG$-graded algebra on $\MH^{*,*}(X).$

\begin{theorem}\label{th:yoneda} Let $\KK$ be a commutative ring, $\GGG\subseteq \RR$ be a subgroup and $X$ be a finite quasimetric space such that $d(x,y)\in \GGG \cup \{\infty\}.$
Then the $\ZZ\times\GGG$-graded magnitude cohomology algebra of $X$ is isomorphic to the $\ZZ\times\GGG$-graded Yoneda algebra of the $\GGG$-graded left $\sigma X$-module $S$
\begin{equation}
\MH^{*,*}(X) \cong  \mathcal{Y}^{*,*}_{\sigma X}(S).
\end{equation}
\end{theorem}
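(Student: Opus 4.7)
The plan is to upgrade the additive isomorphism $\MH^{n,\ell}(X)\cong \Ext^{n,\ell}_{\sigma X}(S,S)$ of Theorem \ref{th:magnitude_homology_finite} to a multiplicative one by computing both sides on the explicit bar resolution $P_\bullet\to S$ of Remark \ref{remark:projective}.

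\textbf{Step 1: the cochain dictionary.} Since the summand of $P_n$ indexed by $(x_0,\ldots,x_n)$ is the free rank one $\sigma X$-module $e_{x_n}\sigma X[|x_0,\ldots,x_n|]$ generated by $(x_0,\ldots,x_n,x_n)$ and $S\cdot e_{x_n}=\KK\,e_{x_n}$, a degree zero $\sigma X$-map $\alpha\colon P_n\to S[\ell]$ is uniquely encoded by a scalar function $\alpha'(x_0,\ldots,x_n)\in\KK$, supported on tuples with $|x_0,\ldots,x_n|=\ell$, through $\alpha(x_0,\ldots,x_n,x_n)=\alpha'(x_0,\ldots,x_n)\,e_{x_n}$. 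Unpacking the bar differential of Remark \ref{remark:projective} shows this is an isomorphism of cochain complexes $\Hom_{\sigma X}(P_\bullet,S[\ell])_0\cong \widetilde{\MC}^{\bullet,\ell}(X)$, giving a concrete chain-level incarnation of the additive isomorphism of Theorem \ref{th:magnitude_homology_finite}.

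\textbf{Step 2: canonical chain map lift.} For a cocycle $\alpha\colon P_n\to S[\ell]$, I would define a lift $\alpha_\bullet\colon P_{\bullet+n}\to P_\bullet[\ell]$ on generators by the concatenation formula
\begin{equation*}
\alpha_k(x_0,\ldots,x_{n+k},y)\;=\;(-1)^{nk}\,\alpha'(x_0,\ldots,x_n)\cdot(x_n,x_{n+1},\ldots,x_{n+k},y),
\end{equation*}
extended $\sigma X$-linearly. The chain-map identity $\partial\alpha_k=\alpha_{k-1}\partial$ splits the $n+k+1$ face maps of $P_{n+k}$ into three groups: the ones with index $i<n$, the interface index $i=n$, and the ones with index $i>n$. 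The last group matches term-by-term with the face maps of $P_k$ acting on $(x_n,\ldots,x_{n+k},y)$ once the sign $(-1)^{nk}$ is consumed. The first two groups, after applying $\alpha_{k-1}$, assemble into the coboundary of $\alpha'$ evaluated at the auxiliary tuple $(x_0,\ldots,x_n,x_{n+1})$; the magnitude cocycle condition on $\alpha'$ is exactly what makes the combined expression vanish.

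\textbf{Step 3: multiplicativity.} For a second cocycle $\beta\colon P_m\to S[s]$, the Yoneda product $[\beta]\circ[\alpha]$ is represented by $\beta[\ell]\circ\alpha_m$, and substituting Step 2 together with the $\sigma X$-linearity of $\beta$ gives
\begin{equation*}
(\beta[\ell]\circ\alpha_m)(x_0,\ldots,x_{n+m},x_{n+m})\;=\;(-1)^{nm}\,\alpha'(x_0,\ldots,x_n)\,\beta'(x_n,\ldots,x_{n+m})\,e_{x_{n+m}},
\end{equation*}
which under the dictionary of Step 1 is the magnitude cup product $\alpha'\cdot \beta'$ up to the Koszul sign $(-1)^{nm}$. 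Twisting the identification of Step 1 by $(-1)^{\binom{n}{2}}$ on $\MC^{n,*}$ absorbs this sign via $\binom{n+m}{2}\equiv \binom{n}{2}+\binom{m}{2}+nm\pmod{2}$, upgrading the additive isomorphism to an isomorphism of $\ZZ\times\GGG$-graded algebras $\MH^{*,*}(X)\cong \mathcal{Y}^{*,*}_{\sigma X}(S)$.

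\textbf{Main obstacle.} The main difficulty lies in Step 2, the verification that the concatenation formula actually defines a chain map: this is where the cocycle hypothesis on $\alpha$ is consumed, and the asymmetry of the resolution $P_\bullet$, which has the free variable $y$ only in the rightmost slot, means that the combinatorics of the face-map contributions is not symmetric in $n$ and $k$ and must be re-indexed carefully. Once the chain map is in hand, Step 3 is a direct substitution, and passage to cohomology classes finishes the proof.
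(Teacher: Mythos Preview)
Your Step 3 contains a genuine gap: the sign twist $(-1)^{\binom{n}{2}}$ absorbs the Koszul sign, but it cannot repair the order reversal. Your computation shows that the Yoneda product $[\beta]\circ[\alpha]$ corresponds (after the twist) to the magnitude product $\alpha'\cdot\beta'$; that is, your map $\Phi'$ satisfies $\Phi'(\beta')\circ_Y\Phi'(\alpha')=\Phi'(\alpha'\cdot_M\beta')$, which is an \emph{anti}-homomorphism, hence an isomorphism $\MH^{*,*}(X)\cong\mathcal{Y}^{*,*}_{\sigma X}(S)^{op}$ rather than $\mathcal{Y}^{*,*}_{\sigma X}(S)$. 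Since the magnitude cohomology algebra is not graded-commutative in general (indeed, by Hepworth's recovery theorem it determines the finite metric space), these two algebras are genuinely different and the theorem as stated is not proved. This is exactly the phenomenon the paper flags in the Remark immediately following the theorem.

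The source of the problem is your choice of the \emph{right}-module bar resolution of Remark~\ref{remark:projective}, where the free variable $y$ sits in the last slot. With that resolution the only natural lift peels $\alpha'$ off the \emph{left} end of the tuple, which forces the order swap. The paper's proof instead passes to the \emph{left}-module version, with generators $(y,x_0,\dots,x_n)$ and free variable on the left; the lift
\[
\hat\varphi_k(y,x_0,\dots,x_{n+k})=(y,x_0,\dots,x_k)\cdot\varphi(x_k,\dots,x_{n+k})
\]
then peels $\varphi$ off the \emph{right} end, no sign is needed, and one checks directly that $\hat\psi[\ell]\circ\hat\varphi_m=\widehat{\psi\cdot\varphi}$, giving a genuine algebra homomorphism. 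Your Steps 1--2 transplant essentially verbatim to this left-module setting (and in fact become simpler, since the $(-1)^{nk}$ disappears); only the choice of side needs to change.
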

\begin{remark} In Theorem \ref{th:yoneda} we treat $S$ as a left module. If we treated $S$ as a right module, a similar proof would give an isomorphism $\MH^{*,*}(X)\cong \mathcal{Y}^{*,*}_{\sigma X}(S)^{op},$ where ``op'' means that we consider the opposite product defined by $ab:= (-1)^{nm} b\circ a$
for $a\in \mathcal{Y}^{n,\ell}(S)$ and $b\in \mathcal{Y}^{m,s}(S).$ In order to avoid this construction of opposite algebra in the statement of Theorem \ref{th:yoneda} and to simplify the proof, we decided to formulate it in terms of left modules.
\end{remark}
\begin{proof}[Proof of Theorem \ref{th:yoneda}] Set $A=\sigma  X.$ In this proof we deal with left $\GGG$-graded $A$-modules. We need a left-module version of the projective resolution $P_\bullet$ of $S$ from Remark \ref{remark:projective}.  Its components are defined by 
\begin{equation}
(P_n)_\ell =\KK\cdot  
\{(y,x_0,\dots,x_n)\mid |y,x_0,\dots,x_n|=\ell \},
\end{equation}
and the differential is defined by $\partial=\sum_{i=0}^n (-1)d_i,$ where $d_i$ is given by 
\begin{equation}
d_i(y,x_0,\dots,x_{n})= 
\begin{cases}
(y,x_0,\dots,\hat x_i,\dots,x_{n}), & |y,x_0,\dots,\hat x_i,\dots,x_{n}|=|y,x_0,\dots,x_{n}|\\
0, & \text{else.}
\end{cases}
\end{equation}
The structure of left $A$-module on $P_n$ is defined by $(z,y')(y,x_0,\dots,x_n)=(z,x_0,\dots,x_n),$ if $|z,x_0,\dots,x_n|=|y,x_0,\dots,x_n|$ and $y=y',$ and by $(z,y')\cdot (y,x_0,\dots,x_n)=0$ otherwise. There is an isomorphism of $A$-modules
\begin{equation}\label{eq:isom_p_n_left}
P_n\cong \bigoplus_{|x_0,\dots,x_n|<\infty} Ae_{x_0}[|x_0,\dots,x_n|].  
\end{equation}
Note that 
\begin{equation}
\Hom_A(Ae_x,S_y) \cong \begin{cases}
\KK[0], & x=y\\
0, & x\neq y.
\end{cases}
\end{equation}
Here $\Hom_A(Ae_x,S_x)_0$ is generated by the projection $Ae_x\epi S_x.$
It follows that $\Hom_A(Ae_x,S)\cong \KK[0]$ for any $x.$ Using the isomorphism \eqref{eq:isom_p_n_left} we obtain 
\begin{equation}
\widetilde{\MC}^{n,\ell}(X)\cong \Hom_A(P_n,S)_{-\ell},
\end{equation}
where a map $\varphi:\MC_{n,\ell}(X) \to \KK$ corresponds to a map $\hat \varphi:P_n \to S[\ell]$ defined by 
\begin{equation}
\hat \varphi(y,x_0,\dots,x_n)= e_{x_0} \cdot \varphi(x_0,\dots,x_n).
\end{equation}
Here we assume that 
\begin{equation}
\varphi(x_0,\dots,x_n)=0 \hspace{1cm} \text{if} \hspace{1cm} |x_0,\dots,x_n|\neq \ell.
\end{equation}
It is easy to check that this isomorphism is compatible with the differentials, and we have an isomorphism of complexes
\begin{equation}
\widetilde{\MC}^{\bullet,\ell}(X)\cong \Hom_A(P_\bullet,S)_{-\ell},
\end{equation}
which induces an isomorphism for cohomology
\begin{equation}
\MH^{n,\ell}(X)\cong \Ext^{n,\ell}_A(S,S).
\end{equation}
Let us prove that this isomorphism is an isomorphism of bigraded algebras.

Assume that we have an element in the magnitude cohomology $\MH^{n,\ell}(X).$ Take a representing cocycle  $\varphi:\widetilde{\MC}_{n,\ell}(X)\to \KK.$ Then $\hat \varphi:P_n\to S[\ell]$ is also a cocycle and it induces a homomorphism $\hat \varphi':\Coker(\partial^P_{n+1}) \to S[\ell].$ Consider a homomorphism $\hat \varphi_0:P_{n}\to P_0[\ell]$ defined by the formula 
\begin{equation}
\hat \varphi_0(y,x_0,\dots,x_{n}) =
  (y,x_0) \cdot \varphi (x_0,\dots,x_n).
\end{equation}
It is easy to check that $\hat \varphi_0$ is a homomorphism of right $A$-modules such that the diagram 
\begin{equation}
\begin{tikzcd}
P_n \ar[r,twoheadrightarrow] \ar[d,"\hat\varphi_0"'] \ar[rd,"\hat \varphi"]
& 
\Coker(\partial^P_{n+1}) \ar[d,"\hat \varphi'"] 
\\ 
P_0[\ell] \ar[r,twoheadrightarrow] 
& 
S[\ell]
\end{tikzcd}
\end{equation}
is commutative. Further for any $k\geq 0$ we define 
$\hat \varphi_k:P_{k+n}\to P_k $
by the formula 
\begin{equation}
\hat \varphi_k(y,x_0,\dots,x_{n+k}) =
(y,x_0,\dots,x_k)\cdot  \varphi (x_k,\dots,x_{n+k}).
\end{equation} 
We claim that $\hat \varphi_k$ defines a morphism of chain complexes: 
\begin{equation} \label{eq:commute_differential}
\hat \varphi_{k-1} \partial_{n+k}=\partial_k \hat \varphi_k, \hspace{1cm} k\geq 1.
\end{equation}
If we prove this, the statement of the theorem follows easily, because if $\psi:\widetilde{\MC}_{m,s}(X)\to \KK$ is another cocycle, then it is easy to check that 
\begin{equation}
 \hat \psi[\ell] \circ \hat \varphi_m = \widehat{\psi\cdot \varphi}.
\end{equation}
So the rest of the proof is devoted to the proof of the equation 
\eqref{eq:commute_differential}. 
 
If $n=0,$ then the statement is obvious, so we will assume  $n\geq 1.$ The differentials are defined by the formulas
\begin{equation}\label{eq:formulas_for_differentials}
\partial_{k+n} = \sum_{i=0}^{n+k} (-1)^i d_i, \hspace{1cm} \partial_k = \sum_{i=0}^k (-1)^i d_i.
\end{equation}
First we note that 
\begin{equation}\label{eq:commute1}
\hat{\varphi}_{k-1} d_i = d_i \hat{\varphi}_k, \hspace{1cm} 0\leq i \leq k-1. 
\end{equation}
Indeed, both maps applied to $(y,x_0,\dots,x_{k+n})$ give $(y,x_0,\dots,\hat{x_i},\dots,x_k)\cdot \varphi(x_k,\dots,x_{k+n}),$ if $x_{i-1} \preccurlyeq x_{i} \preccurlyeq x_{i+1},$ and $0$ otherwise (here we assume  $x_{-1}=y$). On the other hand,  we have 
\begin{equation}
\begin{split}
\hat \varphi_{k-1} d_{i}(y,x_0,\dots,x_{k+n}) =&\\ (y,x_0,\dots,x_{k-1})\cdot \varphi(d_{i-k+1}(x_{k-1},\dots,x_{k+n})),& \hspace{1cm}  k \leq i\leq k+n. 
\end{split}
\end{equation}
Then, using the fact that $\varphi$ is a cycle, we obtain
\begin{equation}\label{eq:commute2}
\begin{split}
\hat{\varphi}_{k-1} (\sum_{i=k}^{k+n}(-1)^i d_i(y,x_0,\dots,x_{n+k})) = \\
(-1)^{k} (y,x_0,\dots,x_{k-1})\cdot \varphi( d_0(x_{k-1},\dots,x_{k+n}) ).&
\end{split}
\end{equation}
Finally we note that 
\begin{equation}\label{eq:commute3}
 d_k\hat{\varphi}_{k}(y,x_0,\dots,x_{n+k}) = (y,x_0,\dots,x_{k-1})\cdot \varphi( d_0(x_{k-1},\dots,x_{k+n})).
\end{equation}
Indeed, if $x_{k-1}\neq x_k,$ then both parts of the equation are zero, because $d_0(x_{k-1},\dots,x_{k+n})=0$ and $d_k(y,x_0,\dots,x_k)=0;$ and if $x_{k-1}=x_k$ then both parts of the equation are equal to $(y,x_0,\dots,x_{k-1}) \cdot \varphi(x_k,\dots,x_{n+k}).$ The equation \eqref{eq:commute_differential} follows from the equations \eqref{eq:commute1},\eqref{eq:commute2},\eqref{eq:commute3}.
\end{proof}

\begin{remark}
Similarly to Theorem \ref{th:yoneda} one can prove that the product in the magnitude cohomology algebra of any quasimetric space $X$ can be presented via Yoneda product, if we present the magnitude cohomology in terms of Ext functors in the category of distance modules  
$\MC^{*,*}(X)\cong \bigoplus_{\ell\in \GGG} \Ext^*(\tilde \KK,\tilde \KK[\ell])$  (Corollary \ref{cor:cohom-ext}).
\end{remark}

\section{Magnitude homology of a digraph}

\subsection{Background on representations of quivers}

A quiver is a quadruple $Q=(Q_0,Q_1,s,t)$ such that $Q_0,Q_1$ are sets and $s,t:Q_1\to Q_0$ are maps. Elements of $Q_0$ are called vertices, elements of $Q_1$ are arrows, for an arrow  $\alpha\in Q_1$ we say that $s(\alpha)$ is the start of $\alpha,$ and $t(\alpha)$ is the target of $\alpha.$ A quiver $Q$ is called finite, if $Q_0,Q_1$ are finite. 

Let $\KK$ be a commutative ring.  A representation $V$ of a quiver $Q$ is a collection of $\KK$-modules indexed by vertices $V(x)$ and a collection of homomorphisms $V(\alpha):V(x)\to V(y)$ for each arrow $\alpha,$ where $x=s(\alpha)$ and $y=t(\alpha).$  A morphism of representations $f:V\to U$ is a collection of homomorphisms $f(x):V(x)\to U(x)$ indexed by vertices such that $U(\alpha)f(x) = f(y) V(\alpha)$ for any arrow $\alpha$ from $x$ to $y.$ The category of representations will be denoted by $\Rep(Q).$

A path of length $n\geq 1$ in a quiver $Q$  is a sequence of arrows $\alpha_1\dots \alpha_n$ such that $t(\alpha_i)=s(\alpha_{i+1}).$ With each vertex $x\in Q_0$ we also associate a trivial path $e_x$ of length zero. For a finite quiver $Q$ we define the path algebra $\KK Q$ as a $\KK$-algebra, whose $\KK$-basis consists of all paths and trivial paths. The product is defined by concatenation of paths, and it vanishes if the concatenation does not exist. The algebra has a unit defined as the sum of trivial paths  $1=\sum_{x\in Q_0} e_x.$ Note that in order to define the unit we use that $Q_0$ is finite. 
It is well-known that the category of representations of a finite quiver $Q$ is equivalent to the category of right $\KK Q$-modules  
\begin{equation}\label{eq:rep-equiv}
\Rep(Q)\simeq \Mod(\KK Q)
\end{equation}
(see  \cite[Ch.III, Th.1.5]{auslender}, \cite[\S 1, Lemma]{crawley1992lectures},  \cite[Theorem 2.4.1]{hazewinkel2006algebras2}).
This equivalence sends a representation $V$ to a $\KK Q$-module $M=\bigoplus_x V(x)$ such that 
$v \cdot \alpha = V(\alpha)(v)$ for any $v\in V(x)$ and any arrow $\alpha$ such that $s(\alpha)=x;$ and $v\cdot \alpha=0,$ if $s(\alpha)\neq x.$ The functor in the opposite direction sends a module $M$ to a representation $V$ defined by $V(x) = Me_x$ and $V(\alpha)(m)=m\cdot \alpha.$

A \emph{quiver with relations} is a pair $(Q,I),$ where $Q$ is a finite quiver and $I$ is an ideal of $\KK Q$. We say that a representation $V$ of a quiver satisfies relations from $I,$ if the annihilator of the corresponding $\KK Q$-module contains $I.$ The full subcategory of representations satisfying relations from $I$ is denoted by 
\begin{equation}
\Rep(Q,I) \subseteq \Rep(Q).
\end{equation}
The equivalence \eqref{eq:rep-equiv} can be restricted to an equivalence 
\begin{equation}
\Rep(Q,I) \simeq \Mod( \KK Q/I)
\end{equation}
(see \cite[Ch.III, Prop.1.7]{auslender}, \cite[Ch.III, Th.1.6]{assem2006elements}).

All this theory can be developed in the graded setting. In this section we assume that $\GGG=\ZZ$ and ``graded'' means ``$\ZZ$-graded''. A graded representation $V$ of a quiver $Q$ is a collection of graded $\KK$-modules indexed by vertices $V(x)$ and a collection of homomorphisms $V(\alpha):V(x)\to V(y)$ of degree $1$ for each arrow $\alpha,$ where $x=s(\alpha)$ and $y=t(\alpha).$  A morphism of graded representations $f:V\to U$ is a collection of homomorphisms $f(x):V(x)\to U(x)$ of degree zero indexed by vertices such that $U(\alpha)f(x) = f(y) V(\alpha)$ for any arrow $\alpha$ from $x$ to $y.$ The category of graded representations will be denoted by $\GrRep(Q).$ 
The path algebra $\KK Q$ has a natural grading, where the $n$-th homogeneous component $(\KK Q)_n$ consists of linear combinations of paths of length $n.$ Similarly to the non-graded case we obtain an equivalence
\begin{equation}
\GrRep(Q)\simeq \GrMod(\KK Q).
\end{equation}
If we assume that $I$ is a homogeneous ideal of $\KK Q$, we can also similarly define a subcategory 
\begin{equation}
\GrRep(Q,I) \subseteq \GrRep(Q)
\end{equation}
such that there is an equivalence
\begin{equation}\label{eq:graded_rep}
\GrRep(Q,I) \simeq \GrMod(\KK Q/I).
\end{equation}

Set $\Lambda=\KK Q/I.$ Denote by $J$ the ideal of $\KK Q$ generated by all non-trivial paths. The ideal $I$ is called admissible if 
\begin{equation}\label{eq:J^nIJ^2}
J^n \subseteq I \subseteq J^2
\end{equation}
for some $n$ (see \cite[Ch. II, Def. 2.1]{assem2006elements}). In this case $(Q,I)$ is called bound quiver and $\Lambda$ is the bound quiver algebra (usually this terminology is used assuming $\KK$ is a field). In this case $\Lambda$ is finitely generated as a $\KK$-module. 
The quotient 
\begin{equation}
    S = \KK Q/J
\end{equation}
has a natural structure of a (right and left) $\Lambda$-module. Note that $S$ is the direct sum of submodules $S_x\cong \KK e_x$
\begin{equation}\label{eq:simple_decomp}
    S = \bigoplus_{x\in Q_0} S_x.
\end{equation}
If $\KK$ is a field and $I$ is admissible, the bound quiver algebra $\Lambda$ is finite dimensional and  $J(\Lambda)=J/I$ is the Jacobson radical of $\Lambda$ (see  \cite[\S III.1, Prop. 1.6]{auslender}, \cite[Ch.II, Lemma 2.1]{assem2006elements}).  Moreover, in this case $\{S_x\mid x\in Q_0\}$ is a complete set of representatives of the isomorphism classes of the simple $\Lambda$-modules  and $S$ is the direct sum this set \cite[Ch. III, Lemma 2.1]{assem2006elements}.

\subsection{Distance modules as representations of quivers}

We define a digraph $G$ as a quiver without loops and multiple arrows. In this case arrows can be identified with pairs of vertices $\alpha = (s(\alpha),t(\alpha)).$ So we can assume that $G_1\subseteq G_0\times G_0$ and $s,t$ are projections. A finite digraph is a digraph such that $G_0$ is finite. Further in this section we assume that $G$ is a fixed finite digraph.

A path $\alpha_1\dots \alpha_n$ in the path algebra $\KK G$ can be identified with a sequence of the vertices  $(x_0,\dots,x_n).$ The trivial path $e_x$ is identified with $(x).$ A path $(x_0,\dots,x_n)$ is called shortest, if it is a shortest path from $x_0$ to $x_n.$ A path which is not shortest  will be called non-shortest. 
Denote by $R(G)\triangleleft \KK G$ a homogeneous ideal generated by two types of relations
\begin{itemize}
\item[(R1)] $(x_0,\dots,x_n)=(y_0,\dots,y_n),$ if $(x_0,\dots,x_n)$ and $(y_0,\dots,y_n)$ are two different shortest paths from $x_0=y_0$ to $x_n=y_n.$
\item[(R2)] $(x_0,\dots,x_n)=0,$ if $(x_0,\dots,x_n)$ is a non-shortest path.
\end{itemize}
We say that a non-shortest path $(x_0,\dots,x_n)$ is a minimal non-shortest path, if $(x_0,\dots,x_{n-1})$ and $(x_1,\dots,x_n)$ are shortest paths. It is easy to see that (R2) can be replaced by 
\begin{itemize}
    \item[(R2')] $(x_0,\dots,x_n)=0,$ if $(x_0,\dots,x_n)$ is a minimal 
non-shortest path.
\end{itemize}

The distance $d(x,y)$ between vertices of $G$ as the infimum of lengths of paths from $x$ to $y.$ So $G$ defines a finite quasimetric space with integral distances that we denote by the same letter $G$. We will consider the category of $\ZZ$-graded distance modules $\DMod_\ZZ(G).$  In this section all distance modules are $\ZZ$-graded.

\begin{proposition}\label{prop:magnitude_modules_representation} The ideal $R(G) \triangleleft \KK G$ is admissible and there is an isomorphism of graded algebras
\begin{equation}
\sigma G \cong \KK G/R(G).
\end{equation}
Moreover, there is an equivalence of categories
\begin{equation}
\DMod_{\ZZ}(G)\simeq  \GrRep(G,R(G))
\end{equation}
that sends a $\ZZ$-graded distance module $M$ to a representation $\tilde M$ such that $\tilde M(x)=M(x)$ and $\tilde M((x,y))=M(x,y)$ for each $(x,y)\in G_1.$ 
\end{proposition}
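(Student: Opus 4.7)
The plan is to prove the three parts of the proposition in order: admissibility of $R(G)$, the algebra isomorphism $\sigma G\cong \KK G/R(G)$, and the equivalence of categories with the stated explicit formula for the functor. For admissibility, I would check $R(G)\subseteq J^2$ by observing that in a digraph every single arrow $(x,y)$ is the unique — hence shortest — length-one path from $x$ to $y$, so every generator of type (R1) or (R2) has length at least $2$. Conversely, since $G$ is finite, $D := \max\{d(x,y)\mid d(x,y)<\infty\}$ is finite, and any path of length $>D$ is automatically non-shortest and thus lies in $R(G)$ by (R2), giving $J^{D+1}\subseteq R(G)$.

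For the algebra isomorphism, I would define a $\KK$-linear map $\phi:\KK G\to \sigma G$ by $\phi((x_0,\dots,x_n)) = (x_0,x_n)$ if $(x_0,\dots,x_n)$ is shortest, and $0$ otherwise. A direct check confirms that $\phi$ is a graded algebra homomorphism: the concatenation of shortest paths $(x_0,\dots,x_n)$ and $(x_n,\dots,x_{n+m})$ is shortest precisely when $x_0\preccurlyeq x_n\preccurlyeq x_{n+m}$, matching the product rule for $\sigma G$, and the length of a shortest path from $x$ to $y$ equals $d(x,y)$, matching the grading. The generators of $R(G)$ lie in $\ker\phi$ by construction, so $\phi$ descends to $\bar\phi:\KK G/R(G)\to \sigma G$. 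Picking for each pair $(x,y)$ with $d(x,y)<\infty$ a shortest path $p_{x,y}$, relation (R1) identifies all shortest paths from $x$ to $y$ with $p_{x,y}$ in the quotient and (R2) kills all non-shortest paths, so the classes $\{[p_{x,y}]\}$ span $\KK G/R(G)$; since $\bar\phi$ sends this spanning set bijectively to the $\KK$-basis $\{(x,y)\mid d(x,y)<\infty\}$ of $\sigma G$, it must be an isomorphism of graded algebras.

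For the categorical equivalence, I would compose Proposition \ref{prop:magmod_equivalence} with \eqref{eq:graded_rep} applied to $\Lambda = \KK G/R(G)\cong \sigma G$:
\[
\DMod_\ZZ(G) \simeq \ZZ\text{-}\GrMod(\sigma G) \simeq \GrRep(G,R(G)).
\]
To verify the explicit formula, I would trace an object $M$ through this composition. Proposition \ref{prop:magmod_equivalence} sends $M$ to $\bar M = \bigoplus_x M(x)$ with action $m\cdot(x,y) = M(x,y)(m)$ for $m\in M(x)$; the equivalence \eqref{eq:graded_rep} then assigns to $\bar M$ the representation $\tilde M$ with $\tilde M(x) = \bar M e_x = M(x)$ and, for each arrow $(x,y)\in G_1$, the map $\tilde M((x,y))(m) = m\cdot(x,y) = M(x,y)(m)$, which is exactly the stated description.

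The main obstacle I anticipate is the injectivity of $\bar\phi$, i.e.\ showing that the candidate spanning set $\{[p_{x,y}]\}$ really is linearly independent in $\KK G/R(G)$; the proposed approach sidesteps a direct syzygy calculation inside $R(G)$ by letting $\bar\phi$ itself witness the bijection with the known basis of $\sigma G$, pulling independence back from the target. Everything else reduces to routine bookkeeping with the finite-quiver and graded-module machinery already developed.
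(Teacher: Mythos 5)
Your proposal is correct and follows essentially the same route as the paper: the same length bounds for admissibility, the same map $\phi$ (shortest path $\mapsto(x_0,x_n)$, non-shortest $\mapsto 0$) for the algebra isomorphism, and the same composition of Proposition \ref{prop:magmod_equivalence} with the equivalence \eqref{eq:graded_rep}. Your spanning-set-onto-basis argument for injectivity of $\bar\phi$ is a slightly more explicit justification of the paper's claim that $\ker\phi=R(G)$, but it is not a different method.
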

\begin{proof} If there are a least two shortest paths from one vertex to another, then these paths have lengths $\geq 2.$ Non-shortest paths also have length $\geq 2.$ Therefore $R(G) \subseteq J^2.$ There is a finite number of shortest paths in $G.$ If we take $n,$ which is larger than the length of any shortest path, then $J^n\subseteq R(G).$ Hence $R(G)$ is admissible. 

Consider a $\KK$-linear homomorphism $\KK G\to \sigma G$ that sends a shortest path $(x_0,\dots,x_n)$ to $(x_0,x_n),$ and a non-shortest path to zero. In particular, it sends $e_x$ to $(x,x).$ It is easy to check that it is an algebra homomorphism and its kernel is $R(G).$ Therefore $\sigma G\cong \KK G/R(G).$ The equivalence $\DMod_{\ZZ}(G)\simeq  \GrRep(G,R(G))$ follows from Proposition \ref{prop:magmod_equivalence} and equivalence \eqref{eq:graded_rep}.
\end{proof}

As a corollary we obtain the following theorem.

\begin{theorem}
Let $\KK$ be a commutative ring and $G$ be a finite digraph. Then there are isomorphisms
\begin{equation}
\MH_{n,\ell}(G)={\rm Tor}^{\KK G/R(G)}_{n,\ell} (S,S), \hspace{1cm} \MH^{n,\ell}(G)={\rm Ext}_{\KK G/R(G)}^{n,\ell} (S,S). 
\end{equation}
Moreover, the magnitude cohomology algebra is isomorphic to the Yoneda algebra of the left $\KK G/R(G)$-module $S$
\begin{equation}
\MH^{*,*}(G)=\mathcal{Y}^{*,*}_{\KK G/R(G)}(S)
\end{equation}
\end{theorem}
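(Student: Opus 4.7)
The plan is to combine three earlier results: Theorem \ref{th:magnitude_homology_finite} expresses magnitude (co)homology of an arbitrary finite quasimetric space $X$ as bigraded $\Tor$ and $\Ext$ over the distance algebra $\sigma X$; Theorem \ref{th:yoneda} identifies the bigraded magnitude cohomology algebra of such an $X$ with the Yoneda algebra $\mathcal{Y}^{*,*}_{\sigma X}(S)$; and Proposition \ref{prop:magnitude_modules_representation} supplies the crucial identification
\[
\sigma G \cong \KK G / R(G)
\]
of graded algebras. Applying the first two theorems to the finite quasimetric space $G$ and then transporting everything along this algebra isomorphism should immediately give all three asserted identifications.

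First I would note that a finite digraph, equipped with the graph metric, is a finite quasimetric space with distances in $\ZZ\cup\{\infty\}$, so the hypotheses of Theorems \ref{th:magnitude_homology_finite} and \ref{th:yoneda} are met with $\GGG = \ZZ$. The module $S$ appearing in those theorems is the quotient $\sigma G / J G$, which under the algebra isomorphism $\sigma G \cong \KK G/R(G)$ corresponds to $(\KK G / R(G))/(J/R(G)) \cong \KK G / J$; this is precisely the module $S$ named in the present theorem. Thus both notions of $S$ agree, and the two $\Tor$ and $\Ext$ formulas follow directly.

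For the algebra statement, Theorem \ref{th:yoneda} gives $\MH^{*,*}(G) \cong \mathcal{Y}^{*,*}_{\sigma G}(S)$ as bigraded $\KK$-algebras under the Yoneda product. The algebra isomorphism $\sigma G \cong \KK G/R(G)$ induces an equivalence of categories of graded modules $\GrMod(\sigma G) \simeq \GrMod(\KK G/R(G))$, and this equivalence is exact and sends $S$ to $S$. Exact equivalences preserve projective resolutions and therefore preserve $\Ext$ groups together with their Yoneda product structure, so $\mathcal{Y}^{*,*}_{\sigma G}(S) \cong \mathcal{Y}^{*,*}_{\KK G/R(G)}(S)$ as bigraded algebras, completing the proof.

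There is essentially no obstacle here: all the hard work has been done in Proposition \ref{prop:magnitude_modules_representation} (identifying $\sigma G$ with the bound quiver algebra $\KK G/R(G)$) and in Theorems \ref{th:magnitude_homology_finite} and \ref{th:yoneda}. The only minor point requiring a sentence of explanation is to check that the module called $S$ in the earlier theorems matches the module called $S$ in the digraph setting, and that the Yoneda product is transported correctly along the algebra isomorphism; both are straightforward once one spells out the identifications. Consequently the proof should be short, essentially a citation of these three results.
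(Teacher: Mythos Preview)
Your proposal is correct and matches the paper's own proof, which is a single sentence citing Theorems \ref{th:magnitude_homology_finite}, \ref{th:yoneda} and Proposition \ref{prop:magnitude_modules_representation}. Your additional remarks about the two notions of $S$ coinciding and the Yoneda product being transported along the algebra isomorphism are helpful clarifications, but the paper leaves these implicit.
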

\begin{proof}
It follows from Theorems \ref{th:magnitude_homology_finite}, \ref{th:yoneda} and Proposition \ref{prop:magnitude_modules_representation}.
\end{proof}

\section{Appendix. Graded modules and algebras}

\subsection{Abelian cosmos of graded modules}\label{subsection:cosmos_of_graded_modules}
Let $\KK$ be a commutative ring and $\GGG$ be an abelian group.  In this subsection we will introduce the B\'enabou cosmos of $\GGG$-graded $\KK$-modules $\GGG\text{-}\GrMod(\KK).$ In this subsection the word ``module'' means ``$\KK$-module''.

An $\GGG$-graded module is a family of modules  $A=(A_g)_{g\in \GGG}.$ An element of $A_g$ will be called a homogeneous element of $A$ of degree $g.$ The degree of a homogeneous element $a$ will be denoted by $|a|.$ A morphism $f:A\to B$ of $\GGG$-graded modules of degree $d\in G$ is defined as a family of homomorphisms $(f_g:A_g\to B_{g+d})_{g\in \GGG}.$ The degree of a morphism will be denoted by $|f|=d.$ Therefore for a homogeneous element $a$ of $A$ we have 
$|f(a)|=|f|+|a|.$
The composition of such morphisms is defined in the obvious way so that 
$|gf| = |f|+|g|.$
We denote by $[A,B]_d$ the abelian group of homomorphisms of degree $d$
\begin{equation}
[A,B]_d = \prod_{g_2-g_1=d} \Hom(A_{g_1},B_{g_2}),
\end{equation}
and denote by $[A,B]$ the $\GGG$-graded module, whose components are $[A,B]_d.$ 

For two $\GGG$-graded modules $A$ and $B$ we denote by $A\otimes B$ an $\GGG$-graded module such that 
\begin{equation}
    (A\otimes B)_g = \bigoplus_{g_1+g_2=g}  A_{g_1}\otimes B_{g_2}.
\end{equation}
Then it is easy to check that for any $\GGG$-graded modules $A,B,C$ there is an isomorphism of $\GGG$-graded modules
\begin{equation}
[A\otimes B,C] \cong [A,[B,C]].
\end{equation}

For an $\GGG$-graded module $A$  and a $\ell\in G$ we denote by $A[\ell]$ the shifted $\GGG$-graded module 
\begin{equation}
    (A[\ell])_g = A_{g-\ell}.
\end{equation}
For any (non-graded) module $V$ we also denote by $V[\ell]$ the $\GGG$-graded module concentrated in degree $\ell$ 
\begin{equation}
V[\ell]_g = 
\begin{cases}
V,& g=\ell,\\
0,& g\ne \ell.
\end{cases}
\end{equation} 
It is easy to see that for any $\GGG$-graded module $A$ there are isomorphisms 
\begin{equation}\label{eq:A[ell]}
A[\ell]\cong A\otimes \KK[\ell], \hspace{1cm} A[-\ell] \cong  [\KK[\ell],A].
\end{equation}

The category of $\GGG$-graded modules is a category whose objects are $\GGG$-graded modules and morphisms are morphisms of degree $0.$ 
\begin{equation}
\Hom_{\GGG\text{-}\GrMod(\KK)}(A,B)=[A,B]_0.
\end{equation} 
It is easy to see that the tensor product $\otimes,$ the   inner hom $[-,=]$ and the unit $\KK[0]=\1$ make it an  abelian  B\'enabou cosmos. One can say that it as a category it is a product of the categories of $\KK$-modules $\GGG\text{-}\GrMod(\KK) = \prod_\GGG \Mod(\KK).$ Therefore, this category has enough of projectives and injectives.

\subsection{Graded modules over graded algebras} \label{subsection:graded_modules_over_graded_algebras}
By a $\GGG$-graded $\KK$-algebra $\Lambda$ we mean a collection of $\KK$-modules $(\Lambda_g)_{g\in \GGG}$ equipped with bilinear homomorphisms $\cdot : \Lambda_g\times \Lambda_h\to \Lambda_{g+h}$ and an element $1\in \Lambda_0$ satisfying the associativity and unit axioms (one can say that a graded algebra is a monoid in the cosmos of $\GGG$-graded modules). Traditionally people consider an ordinary algebra $\Lambda^\oplus=\bigoplus_{g\in \GGG} \Lambda_g$ associated with the graded algebra, and call it graded algebra. We will also allow such freedom of speech, replacing $\Lambda$ by $\Lambda^{\oplus}$ if it does not lead to confusion. However, the situation with tensor product and hom of $\GGG$-graded $\Lambda$-modules is more complex, and there such freedom of speech can lead to confusion. So let us describe in details relations between hom-sets and tensor products of $\GGG$-graded $\Lambda$-modules and $\Lambda^\oplus$-modules. Further we assume that the word ``graded'' means ``$\GGG$-graded'' for the fixed $\GGG$. 

If $\Lambda$ is a graded algebra, a right $\GGG$-graded $\Lambda$-module is a collection of $\KK$-modules $M=(M_g)_{g\in \GGG}$ equipped with bilinear maps $\cdot:M_g\times \Lambda_h \longrightarrow M_{g+h}$ satisfying the exterior associativity and unit axioms. A left graded $\Lambda$-module is defined similarly. If $M$ and $N$ are right graded $\Lambda$-modules, then 
\begin{equation}
\Hom_\Lambda(M,N),    
\end{equation}
is a graded $\KK$-module whose $\ell$-th component $\Hom_\Lambda(M,N)_\ell$ consists of collections of $\KK$-homomorphisms $(f_g:M_g\to N_{g+\ell})_{g\in \GGG}$ such that $f_{g+h}(m\cdot a) = f_g(m)\cdot a$ for any $m\in M_g,a\in \Lambda_h.$ The hom-set in the category of graded $\Lambda$-modules is the $0$-component of this graded module
\begin{equation}
\Hom_{\GGG\text{-}\GrMod(\Lambda)}(M,N) = \Hom_\Lambda(M,N)_0. 
\end{equation}
So we can say that 
\begin{equation}\label{eq:hom_d}
\Hom_\Lambda(M,N)_\ell = \Hom_{\GGG\text{-}\GrMod(\Lambda)}(M,N[-\ell]).
\end{equation}

Now assume that $M$ is a right graded $\Lambda$-module and $N$ is a left graded $\Lambda$-module. Then the tensor product 
\begin{equation}
M\otimes_\Lambda N   
\end{equation}
is defined as the graded $\KK$-module, whose $\ell$-th component $(M\otimes_\Lambda N)_\ell$ is defined as the quotient of the $\KK$-module $\bigoplus_{g+h=\ell} M_g\otimes N_h $ by the submodule generated by elements of the form $ma\otimes n - m\otimes an.$ It is easy to see that 
\begin{equation}
\Lambda \otimes_\Lambda N\cong N, \hspace{1cm}  \Hom_\Lambda(\Lambda,M)\cong M.
\end{equation}

For a graded $\Lambda$-module $M$ we denote by $M^\oplus = \bigoplus_{g\in \GGG} M_g$ 
the corresponding 
$\Lambda^\oplus$-module. 
We will also consider a bigger 
$\Lambda^\oplus$-module
$M^\Pi=\prod_{g\in \GGG} M_g.$ 
\begin{equation}
M^{\oplus} \subseteq M^{\Pi}.
\end{equation}
The elements of $M^\Pi$ are families $m_*=(m_g)_{g\in \GGG},$ where $m_g\in M_g.$ 
Then for $a\in \Lambda_h$ the product is defined by $(m_* \cdot a)_{g} = m_{g-h} a.$  
The same notations are used for graded $\KK$-modules. 
Using that the tensor product over $\KK$ commutes with direct sums, it is easy to check that there is an isomorphism  
\begin{equation}\label{eq:tensor_graded-nongraded}
(M \otimes_\Lambda N)^\oplus \cong M^{\oplus} \otimes_{\Lambda^{\oplus}} N^{\oplus}.
\end{equation}
However, for hom-sets the formula is more complicated
\begin{equation}\label{eq:hom_graded-nongraded}
\Hom_{\Lambda^\oplus}(M^\oplus,N^\Pi) \cong  \Hom_\Lambda(M,N)^\Pi
\end{equation}
It follows from the isomorphism  $\Hom_\KK(M^\oplus,N^\Pi)\cong \prod_{g,h}\Hom_\KK(M_g,N_h).$ If both graded modules are non-trivial only in finitely many degrees, then products in the last formula can be replaced by direct sums. 

The derived functors of the functors 
\begin{equation}
-\otimes_\Lambda N, \Hom_\Lambda(M,-) :\GGG\text{-}\GrMod(\Lambda) \longrightarrow \GGG\text{-}\GrMod(\KK)     
\end{equation}
are denoted by 
\begin{equation}
{\rm Tor}_{n,\ell}^\Lambda(M,N) = (L_n(-\otimes_\Lambda N))(M)_\ell, \end{equation}
\begin{equation}
{\rm Ext}^{n,\ell}_\Lambda(M,N)=(R^n\Hom_\Lambda(M,-)(N))_{-\ell}.
\end{equation}
where $(n,\ell)\in \mathbb{N}\times \GGG.$ 
The formula \eqref{eq:hom_d} implies that the bigraded Ext can be presented as the ordinary Ext in the category of graded modules
\begin{equation}\label{eq:ext_bigrad_as_ext_in_cat}
\Ext^{n,\ell}_\Lambda(M,N)=\Ext^n_{\GGG\text{-}\GrMod(\Lambda)}(M,N[\ell]).
\end{equation}
Note that this implies the symmetric description of the bigraded ext
\begin{equation}
\Ext^{n,\ell}_\Lambda(M,N) = (R^n\Hom_\Lambda(-,N)(M))_{-\ell}.
\end{equation}
Using this, formulas \eqref{eq:tensor_graded-nongraded}, \eqref{eq:hom_graded-nongraded} and the fact that for a free graded resolution $P_\bullet$ of $M$ the complex $P_\bullet^{\oplus}$ is a free graded resolution of $M^\oplus$, we obtain 
\begin{equation}
{\rm Tor}_{n,*}^\Lambda(M,N)^\oplus \cong {\rm Tor}_{n}^{\Lambda^\oplus}(M^{\oplus},N^{\oplus}), \hspace{5mm} 
\Ext_{\Lambda^\oplus}^n(M^\oplus,N^\Pi)\cong \Ext^{n,*}_\Lambda(M,N)^\Pi. 
\end{equation}

\printbibliography

\end{document}